\documentclass{article}



\usepackage{amsmath}
\usepackage{latexsym}
\usepackage{amsfonts}
\usepackage{amssymb}
\usepackage{amscd}
\usepackage{theorem}
\usepackage{a4wide}
\usepackage{color}
\usepackage[utf8]{inputenc}
\usepackage[title]{appendix}
\usepackage{manyfoot}

\theoremstyle{thmstyleone}
\newtheorem{theorem}{Theorem}
\newtheorem{lemma}[theorem]{Lemma}
\newtheorem{proposition}[theorem]{Proposition}
\newtheorem{corollary}[theorem]{Corollary}

\theoremstyle{thmstyletwo}%
\newtheorem{example}{Example}%
\newtheorem{remark}{Remark}%

\theoremstyle{thmstylethree}%
%


\newenvironment{proof}{    
	\noindent
	\textbf{Proof.}}{
	\hfill $\Box$
	\vspace{3mm}
}

\numberwithin{equation}{section}



\newcommand{\N}{\mathbb{N}} 
\newcommand{\R}{\mathbb{R}} 
\newcommand{\C}{\mathbb{C}} 
\newcommand{\D}{\mathbb{D}} 

\newcommand{\eps}{\varepsilon}



\DeclareMathOperator*{\ind}{ind}    



\newcommand{\vgd}{v_{\gamma+\delta}}
\newcommand{\vg}{v_{\gamma}}


\title{Cesàro operators associated with Borel measures acting on weighted spaces of holomorphic functions with sup-norms}

\author{María J. Beltrán-Meneu\footnote{Departament de Matemàtiques, Universitat Jaume I, mmeneu@uji.es}, José Bonet\footnote{Departament de Matemàtica Aplicada, Universitat Politècnica de València, jbonet@mat.upv.es} and Enrique Jordá\footnote{Departament de Matemàtica Aplicada, Universitat Politècnica de València, ejorda@mat.upv.es}}
\date{}

\begin{document}

\maketitle

%
%
%
%

\begin{abstract}

Let $\mu$ be a positive finite Borel measure on $[0,1).$ Cesàro-type operators $C_{\mu}$ when acting on weighted spaces of holomorphic functions are investigated. In the case of bounded holomorphic functions on the unit disc we prove that $C_\mu$ is continuous if and only if it is compact. In the case of weighted Banach spaces of holomorphic function defined by general weights, we give sufficient and necessary conditions for the continuity and compactness. For standard weights,  we    characterize the continuity and compactness on classical growth Banach spaces of holomorphic functions. We also study the  point spectrum and the spectrum of $C_\mu$ on  the space of  holomorphic functions on the disc, on the space of bounded holomorphic functions on the disc, and on the classical growth Banach spaces of holomorphic functions.  All  characterizations are given in terms of the sequence of moments  $(\mu_n)_{n\in\N_0}$.    The continuity, compactness and spectrum of $C_\mu$ acting on Fr\'echet and (LB) Korenblum type spaces are also considered.

\end{abstract}

\noindent
\textbf{Keywords:} Cesàro-type operators, Weighted Banach spaces of analytic functions,  Korenblum type spaces, Compactness, Spectrum.\\

\noindent
\textbf{2020 Mathematics Subject Classification:} Primary 47B38; Secondary 46E10, 46E15, 47A10, 47A16, 47A35.

	\section{Introduction}
Let $H(\D)$ be the space of holomorphic functions on the unit disc $\D$	of the complex plane $\C$ endowed with the topology of uniform convergence on the compact subsets of $\D$. If  $\mu$ is a positive finite Borel measure on $[0,1)$ and $f\in H(\D),$ $f(z)=\sum_{n=0}^{\infty}a_nz^n,$ $z\in \D,$ the Cesàro-type operator associated with $\mu$ is defined by
	 $$C_{\mu}(f)(z):=\sum_{n=0}^{\infty}\mu_n\left(\sum_{k=0}^{n}a_k\right)z^n, \ z\in \D,$$
	where $\mu_n:=\int_{0}^{1}t^nd\mu(t)$ is the {\it n-th moment of $\mu$}, $n\in \N_0:= \N\cup\{0\}$. This operator was studied by Galanopoulos et al. in \cite{GGM}. They proved, in particular, that $C_\mu:\ H(\D)\rightarrow H(\D)$  is continuous and has the integral representation (\cite[Proposition 1]{GGM}):
	$$ C_\mu(f)(z)=\int_{0}^{1}\frac{f(tz)}{1-tz}d\mu(t), \ f\in H(\D).$$
It is a generalization of the classical Cesàro operator $C$. Indeed, if $\mu$ is the Lebesgue measure, then 	$C_\mu=C,$ with  $\mu_n:=\int_{0}^{1}t^ndt=\frac{1}{n+1}, \ n\in \N.$

Several authors have analyzed the continuity and compactness of Cesàro-type operators on different Banach spaces of holomorphic functions on $\D.$   By the closed graph theorem, when dealing with Fréchet or (LB)-spaces $X$ continuously embedded in $H(\D),$ to prove the continuity of $C_\mu$  it suffices to show that the operator is well defined on $X$  \cite[Theorem 24.31]{meisevogt}. Galanopoulos et al. \cite{GGM} characterized the boundedness and compactness on the Hardy spaces $H^p,$  $1\leq p\leq \infty,$ on the weighted Bergman spaces $A_{\alpha}^p,$ $1< p<\infty$ and $\alpha>-1$, on {\it BMOA}, and on the Bloch space $\mathcal{B},$ obtaining conditions on the positive finite Borel measure $\mu.$ Measures of Carleson type played a basic role, relating boundedness  to Carleson measures in the case of $H^p$ and  $A_{\alpha}^p,$ and to logarithmic  Carleson measures in the case of {\it BMOA}, and $\mathcal{B}.$ Compactness was related to vanishing  and  vanishing logarithmic  Carleson measures, respectively.  In our study we deal with $s$-Carleson and vanishing $s$-Carleson measures.

Galanopoulos et al. \cite{GGM} also characterized the continuity of $C_{\mu}$  on the Banach space of bounded holomorphic functions on the disc $H^{\infty}(\D)$, space on which the Cesàro operator is not well defined \cite{DanikasSiskakis}, and asked whether $C_\mu(H^{\infty}(\D))\subseteq BMOA$. Blasco \cite{Blasco}  solved this question in the afirmative whenever $\mu$ is a positive Carleson measure and studied the boundedness of $C_{\mu}$ on Hardy, $BMOA$ and Bloch spaces for complex Borel measures $\mu.$ This question has also been solved by Bao et al. \cite{BaoSunWulan}. Recently, Galanopoulos et al. continue their study of the Cesàro-type operator on different Hilbert spaces of holomorphic functions \cite{GGM2023}, on the Dirichlet space and on analytic Besov spaces \cite{GGMM}.

In our work, we study the action of   Cesàro-like operators on weigthed spaces of holomorphic functions on $\D$ defined by sup-norms. A {\it weight} $v$ on $\D$  is  a   positive,  radial ($v(z)=v(|z|),\ z\in \D$), continuous and   decreasing function with respect to $|z|$ satisfying $\lim_{r\rightarrow 1^-}v(r)=0$. Given a weight $v$, we consider the {\it weighted Banach spaces of holomorphic funcions }
	
	$$H_v^{\infty}:=\{f\in H(\D): \ \sup_{z\in \D}v(z)|f(z)|<\infty\}$$
	and
	
	$$H_v^{0}:=\{f\in H_v^{\infty}: \ \lim_{|z|\rightarrow 1^-} v(z)|f(z)|=0\}.$$
	Both are Banach spaces under the norm $\|f\|_v:=\sup_{z\in \D}v(z)|f(z)|, \ f\in H_v^{\infty}.$ They appear in the study of growth conditions of analytic functions and have been investigated by several authors since the work of Shields and Williams (see eg.  \cite{BBG}, \cite{BBT}, \cite{Lusky2006}, \cite{SW1971} and the references therein). The space $H_v^{0}$ coincides with the closure of the polynomials in $H_v^{\infty}$ and $H_v^{\infty}$ is canonically isomorphic to  the bidual $(H_v^{0})^{**}$ \cite{BS}. Observe that the constant weight $v\equiv 1$ does not satisfy our assumptions. In this case, $H_v^{\infty}$ is the classical Banach space $H^\infty(\D)$ of bounded holomorphic functions on $\D$, and $H_v^{0}$ reduces to $\{ 0 \}$ in this case. We will state and prove some results for $H^\infty(\D)$, but when we refer to a weight in the statements, the constant weight is excluded, unless it is explicitly mentioned.  The so-called \textit{standard  weights} are defined by  $v_{\gamma}(r)=(1-r)^{\gamma}, $ $r>0,$ $\gamma>0,$ the spaces $H_{v_{\gamma}}^{\infty}$ and $H_{v_{\gamma}}^0$ are the classical growth Banach spaces $A^{-\gamma}$ and $A^{-\gamma}_0.$ These spaces play an important role in interpolation and   sampling of holomorphic functions \cite[Chapters 4 and 5]{Korenblum}.  Weighted Bloch spaces $\mathcal{B}_\gamma$ ($\mathcal{B}_{\gamma}^{0}$) are isomorphic to $H_{v_{\gamma}}^{\infty}\oplus \C$ ($H_{v_{\gamma}}^0\oplus \C$) by means of $f\mapsto (f',f(0))$. Shields and Williams \cite{SW1971} proved that  $H_{v_{\gamma}}^{\infty}$ is isomorphic to $\ell^{\infty}$ and that  $H_{v_{\gamma}}^0$ is isomorphic to $c_0$; see also \cite[Theorem 1.1]{Lusky2006}. Aleman and Persson  obtained the continuity of the classical operator $C$ on the spaces $H_{v_{\gamma}}^{\infty}$ and $H_{v_{\gamma}}^0$ and determined their spectrum (see \cite[Theorem 4.1]{Persson2008} and \cite[Theorems 4.1 and 5.1, Corollaries 2.1 and 5.1]{AlemanPersson2010}). 	Albanese et al. \cite{ABR2016} complemented their work  determining the norm of the operator and  proved that it  is not weakly compact on $H_{v_{\gamma}}^{\infty},$ neither on $H_{v_{\gamma}}^0.$

Given two functions  $f$  and $g$ defined in an (unbounded) interval of real numbers, we write   $f(x)=O(g(x))$ as $x\to a$ ($x\rightarrow \infty$) if there exist a constant  $C>0$  (and $M>0$) such that
$|f(x)|\leq C|g(x)|$ in a neighbourhood of $a$  ( for every $x\geq M$),
and $f(x)=o(g(x))$ as $x\to a$ if $\lim_{x\rightarrow a} \frac{f(x)}{g(x)}=0$ (here $a=\infty$ is allowed).  The expression $f(x) \lesssim g(x)$ means $f(x)=O(g(x)).$ When $f(x) \lesssim g(x)  \lesssim f(x),$ we write $f(x)\cong g(x).$ We recall that a finite positive Borel measure $\mu$ on $[0,1)$ is an {\it $s$-Carleson measure}, $s>0,$  if
$$ \mu([t,1))=O((1-t)^s), \ 0\leq t<1,$$
and a {\it vanishing $s$-Carleson measure} if
$$\mu([t,1))=o((1-t)^s) \text{ as $t\rightarrow 1^-$}.$$
 A  $1$-Carleson measure and a vanishing $1$-Carleson measure will be simply called a Carleson measure and a vanishing Carleson measure, respectively. By \cite[Lemma 2]{GGM}, $\mu$ is a Carleson measure if and only if $\mu_n=O(\frac{1}{n})$, $n\in \N,$ and $\mu$ is a vanishing Carleson measure if and only if $\mu_n=o(\frac{1}{n}).$

Our notation for concepts from functional analysis and operator theory is standard. We refer the reader e.g.\ to  \cite{Conway} and \cite{meisevogt}. 	 Given a Banach space $X,$ we denote by ${X}^*$ its topological dual, that is, the space of  all continuous linear forms on $X,$ and  given a continuous operator $T$ on $X,$  we denote by ${T}^*$ its \emph{transpose}.  For weighted Banach spaces of analytic functions of the type considered in this paper and operators between them, see \cite{pepesurvey}.

We briefly describe the content of the article.  In section \ref{sec:boundcompact} we study  the continuity and the compactness of $C_\mu$ on  different spaces of holomorphic functions on the unit disc. In the case of  $H^{\infty}(\D)$ we prove  in Theorem \ref{Cont_Hinfty} that $C_\mu$ is continuous if and only if it is compact, thus complementing \cite[Theorem 4]{GGM}. In the case of weighted Banach spaces of holomorphic function defined by general weights, we give sufficient and necessary conditions for the continuity and compactness  (Proposition \ref{weaklycompact} and Proposition \ref{Caract_cont_gral}). For standard weights,  we    characterize when $C_\mu$ is continuous  (Theorem \ref{Teorema cont}) and  compact  (Theorem \ref{Teorema compact}) on classical growth Banach spaces of holomorphic functions. When  we focus on operators whose domain and range is the same $H_{v_\gamma}^{\infty},$ we get:\\

\noindent 
{ {\bf Theorem}. Let $\gamma>0$ and let $\mu$ be a positive finite Borel measure on $[0,1)$. The following are equivalent:
\begin{itemize}
\item[(i)] $C_\mu: H_{v_\gamma}^{\infty}\to H_{v_\gamma}^{\infty}$ is a well defined continuous (compact) operator.
\item[(ii)]  $\mu$ is a (vanishing) Carleson measure.
\end{itemize}

 In section \ref{sec:Spectrum}  we analyse the spectrum and the point spectrum of $C_\mu$ on $H(\D),$ on $H^{\infty}(\D),$ and on the classical growth Banach spaces of holomorphic functions. All  characterizations are given in terms of the sequence of moments  $(\mu_n)_{n\in\N_0}$.  The continuity, compactness and spectrum of $C_\mu$ acting on Fr\'echet and (LB) Korenblum type spaces are considered in Section \ref{F-LB_Korenblum}.   We refer to \cite[Chapter 25]{meisevogt} for the definitions of Fr\'echet spaces and (LB) spaces.  Finally, we include  an Appendix devoted to give concrete examples of sequences $(\mu_n)_{n\in\N_0}$ which are solutions of  the Hausdorff moment problem, that is, sequences $(\mu_n)_{n\in\N_0}$ which are the moments of a positive finite Borel measure $\mu$ supported on $[0,1)$. These examples play a role in Sections \ref{sec:boundcompact} and \ref{sec:Spectrum}.

\section{Boundedness and compactness of $C_{\mu}$ on spaces of holomorphic functions}\label{sec:boundcompact}

\subsection{Weighted Banach spaces of holomorphic functions}

In this section we study the  Cesàro-type operator $C_\mu$ on weighted Banach spaces of holomorphic functions defined by general weights as well as for $H^\infty(\D)$.

In the proof of next Lemma we use the following notation: for  a weight $v$, $B^\infty_v$ and $B^0_v$ denote the closed unit ball of $H_{v}^\infty$ and $H_{v}^0$, respectively. It is easy to see by Montel's theorem that $B^\infty_v$ is a compact subset of $H(\D)$ for the topology $\tau_0$ of uniform convergence on the compact subsets of $\D$. Moreover, \cite[Lemma 1.1]{BBT} implies that $B^0_v$ is $\tau_0$-dense in $B^\infty_v$ and \cite[Theorem 1.1 and Example 2.1]{BS} yields that $H^\infty_v$  and $(H^0_v)^{**}$ are isometrically isomorphic. In fact, the compact open topology $\tau_0$, the pointwise topology and the weak* topology $\sigma(H^\infty_v,(H^0_v)^*)$ coincide on the closed unit ball $B^\infty_v$, by Alaoglu-Bourbaki's theorem, and consequently on all the bounded subsets.

\begin{theorem}\label{lemma_cont_Hv_Hv0}
		Let $\mu$ be a positive finite Borel measure on $[0,1)$ and $v, w$ be weights on $\D$. The following conditions are equivalent:
		\begin{itemize}
			\item[(i)] $C_\mu: H_{v}^\infty\to H_{w}^\infty$ is continuous and $C_\mu(1)=\sum_{n=0}^{\infty}\mu_n z^n\in H_{w}^0 $.
			\item[(ii)] $C_\mu: H_{v}^0\to H_{w}^0$ is continuous.
		\end{itemize}
		If these equivalent conditions hold, then the bitranspose $C_{\mu}^{**}$ of $C_\mu: H_{v}^0\to H_{w}^0$ coincides with $C_\mu: H_{v}^\infty\to H_{w}^\infty,$ and $\|C_\mu\|_{\mathcal{L}(H_{v}^\infty, H_{v}^\infty)}=\|C_\mu\|_{\mathcal{L}(H_{v}^0, H_{v}^0)}.$
		
	\end{theorem}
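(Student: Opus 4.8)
The plan is to exploit the bidual identification $H^\infty_v\cong (H^0_v)^{**}$ (isometric, via the canonical embedding) recalled just before the statement, and to show that the abstractly defined bitranspose of $C_\mu\colon H^0_v\to H^0_w$ is nothing but the concrete operator $C_\mu\colon H^\infty_v\to H^\infty_w$. Once this identification is secured, the equivalence of (i) and (ii) and the norm equality follow from soft Banach-space facts together with the topological coincidences on bounded sets.

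First I would prove (ii) $\Rightarrow$ (i). Assume $C_\mu\colon H^0_v\to H^0_w$ is continuous. Since $v(r)\to 0$ as $r\to 1^-$, the constant function $1$ lies in $H^0_v$, so $C_\mu(1)=\sum_{n\ge 0}\mu_n z^n\in H^0_w$ already. It remains to see that $C_\mu$ is continuous on $H^\infty_v$ and equals the bitranspose $C_\mu^{**}\colon (H^0_v)^{**}\to (H^0_w)^{**}$, that is $H^\infty_v\to H^\infty_w$. The key step: fix $f\in B^\infty_v$; by the $\tau_0$-density of $B^0_v$ in $B^\infty_v$ choose a net $(f_\alpha)\subset B^0_v$ with $f_\alpha\to f$ in $\tau_0$, which on the bounded set $B^\infty_v$ coincides with weak*-convergence $\sigma(H^\infty_v,(H^0_v)^*)$. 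On the one hand $C_\mu^{**}$ is weak*-to-weak* continuous, so $C_\mu^{**}(f_\alpha)\to C_\mu^{**}(f)$ weak* in $H^\infty_w$, and since $\|C_\mu^{**}\|=\|C_\mu\|$ these images stay inside a fixed ball, where weak* and $\tau_0$ agree. On the other hand $C_\mu$ is $\tau_0$-continuous on $H(\D)$, so $C_\mu(f_\alpha)\to C_\mu(f)$ in $\tau_0$. As $C_\mu^{**}(f_\alpha)=C_\mu(f_\alpha)$ for $f_\alpha\in H^0_v$, uniqueness of $\tau_0$-limits forces $C_\mu^{**}(f)=C_\mu(f)$. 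This simultaneously shows $C_\mu(f)\in H^\infty_w$ and that $C_\mu^{**}=C_\mu$ on $H^\infty_v$, so $C_\mu\colon H^\infty_v\to H^\infty_w$ is continuous.

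For (i) $\Rightarrow$ (ii) I would argue by density of polynomials. Continuity of $C_\mu\colon H^\infty_v\to H^\infty_w$ is assumed; the only thing to check is that $C_\mu$ maps $H^0_v$ into the closed subspace $H^0_w$. For a monomial, $C_\mu(z^k)=\sum_{n\ge k}\mu_n z^n=C_\mu(1)-\sum_{n=0}^{k-1}\mu_n z^n$, which lies in $H^0_w$ because $C_\mu(1)\in H^0_w$ by hypothesis and the subtracted term is a polynomial (polynomials, being bounded on $\D$, belong to every $H^0_w$). Hence $C_\mu(p)\in H^0_w$ for every polynomial $p$. Since $H^0_v$ is the $\|\cdot\|_v$-closure of the polynomials, $C_\mu\colon H^\infty_v\to H^\infty_w$ is continuous, and $H^0_w$ is closed in $H^\infty_w$, it follows that $C_\mu(H^0_v)\subseteq H^0_w$, so the restriction is a continuous operator $H^0_v\to H^0_w$.

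Finally, under either hypothesis both operators are continuous and the argument of the first direction identifies $C_\mu\colon H^\infty_v\to H^\infty_w$ with the bitranspose $C_\mu^{**}$; taking $w=v$ and invoking the standard isometry $\|T^{**}\|=\|T\|$ together with the isometric bidual identification yields $\|C_\mu\|_{\mathcal{L}(H^\infty_v,H^\infty_v)}=\|C_\mu\|_{\mathcal{L}(H^0_v,H^0_v)}$. I expect the main obstacle to be precisely the middle step of the first direction, namely checking that the abstract bitranspose agrees with the explicit $C_\mu$: it is there that one must juggle three a priori different topologies (weak*, pointwise/$\tau_0$, and the norm control that keeps the relevant nets inside bounded sets where those topologies coincide). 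The two implications themselves are then essentially bookkeeping around this identification.
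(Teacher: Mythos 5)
Your proof is correct and follows essentially the same route as the paper: the (i)$\Rightarrow$(ii) direction via $C_\mu(z^N)=C_\mu(1)-\sum_{n=0}^{N-1}\mu_n z^n$ and density of polynomials is identical, and your (ii)$\Rightarrow$(i) rests on exactly the paper's key ingredients, namely the $\tau_0$-density of $B^0_v$ in $B^\infty_v$, the coincidence of $\tau_0$ and $\sigma(H^\infty_v,(H^0_v)^*)$ on bounded sets, and the weak*-weak* continuity of the bitranspose. The only cosmetic difference is that you obtain well-definedness and boundedness of $C_\mu$ on $H^\infty_v$ as a byproduct of the identification $C_\mu^{**}=C_\mu$, whereas the paper first proves $C_\mu(B^\infty_v)\subseteq M B^\infty_w$ by a direct $\tau_0$-closure argument and only then establishes the identification.
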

	
	\begin{proof}
		(i)$\Rightarrow$ (ii)  If  $C_\mu(1)=\sum_{n=0}^{\infty}\mu_n z^n\in H_{w}^0,$  then $C_\mu(z^N)\in H_{w}^0$ for every $N\in \N.$ Indeed, $C_\mu(z^N)= C_\mu(1)-\sum_{n=0}^{N-1}\mu_nz^n\in H_{w}^0$, as  the polynomials $\mathcal{P}$ are contained in $H_{w}^0.$ Moreover, since the set of  polynomials $\mathcal{P}$ is densely contained in $H_{w}^0,$  $C_{\mu}(H_{w}^0) =C_{\mu}(\overline{\mathcal{P}})\subseteq \overline{C_{\mu}(\mathcal{P})}\subseteq H_{w}^0,$ where the closure is taken with respect to $\|\ \|_w.$ So, the closed graph theorem yields the continuity.\\
		\noindent
		(ii)$\Rightarrow$ (i) If $C_\mu: H_{v}^0\to H_{w}^0$ is continuous, then it is obvious that $C_\mu(1)\in H_{w}^0.$  Moreover, there is $M>0$ such that $C_\mu(B^0_v) \subseteq M B^0_w$. The continuity of $C_\mu:H(\D) \rightarrow H(\D)$ implies (with closures taken in $(H(\D), \tau_0)$)
$$
C_\mu(B^\infty_v) = C_\mu(\overline{B^0_v}) \subseteq \overline{C_\mu(B^0_v)} \subseteq M \overline{B^0_w} \subseteq M B^\infty_w.
$$
This implies that $C_\mu: H_{v}^\infty\to H_{w}^\infty$ is well defined and continuous.

Now assume that these two equivalent conditions are satisfied. By our comments before the Lemma (see \cite{BS}), $C_\mu^{**}: H_{v}^\infty\to H_{w}^\infty$ and $C_\mu: H_{v}^\infty\to H_{w}^\infty$ are two continuous linear operators which coincide on $H^0_v$. To see that they also coincide on $H^\infty_v$, take $f \in B^\infty_v$. There is a sequence $(f_n)_n \in B^0_v$ which converges to $f$ for the topology $\tau_0$. Since $C_\mu: H(\D) \rightarrow H(\D)$ is continuous, $(C_\mu(f_n))_n$ converges to $C_\mu(f)$ in $(H(\D),\tau_0)$. On the other hand, $C_{\mu}^{**}$ is weak*-weak* continuous and $(f_n)_n$ converges to $f$ for the topology $\sigma(H^\infty_v,(H^0_v)^*)$. Hence, the sequence $C_\mu(f_n)=C^{**}_{\mu}(f_n), n=1,2,...$ converges to $C^{**}_{\mu}(f)$ in $H^\infty_w$ for the topology $\sigma(H^\infty_w,(H^0_w)^*)$. Since this topology coincides with $\tau_0$ on the bounded subsets of $H^\infty_w$, we get $C^{**}_{\mu}(f) = C_\mu(f)$. The coincidence that we have just proved implies the assertion about the operator norms.
	\end{proof}
	
	\begin{proposition}\label{exemples C_mu(1)}
	Let $\mu$ be a positive finite Borel measure on $[0,1).$  The following hold:
	\begin{itemize}
	\item[(i)] If  $\mu$ is a Carleson measure and $w$ a weight such that  $w(r)|\log (1-r)|\rightarrow 0$ as $r\rightarrow 1^-$,  then $C_{\mu}(1)\in H_{w}^{0}.$ This condition is satisfied, for instance, by the standard weights.
	\item[(ii)] 	If $\mu$ is an $s$-Carleson measure for $0<s<1$, then $C_\mu(1)\in H_{v_\gamma}^{0}$ for each $\gamma>1-s$. 
	\end{itemize}	
	
	\end{proposition}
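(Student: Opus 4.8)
The plan is to reduce both parts to a single radial growth estimate for the function $C_\mu(1)$ and then check that the weight decays fast enough to kill that growth. Recall from \cite[Proposition 1]{GGM} that $C_\mu(1)(z)=\sum_{n=0}^\infty \mu_n z^n=\int_0^1 \frac{d\mu(t)}{1-tz}$; note $C_\mu(1)\in H(\D)$ automatically. Membership $C_\mu(1)\in H_w^0$ amounts to $\lim_{|z|\to 1^-} w(z)\,|C_\mu(1)(z)|=0$, boundedness in $H_w^\infty$ then being automatic since $w|C_\mu(1)|$ is continuous on $\D$ and vanishes at the boundary. Writing $r=|z|$ and using both $|z^n|=r^n$ and $|1-tz|\ge 1-tr$, one has the control
\[
|C_\mu(1)(z)|\le \sum_{n\ge 0}\mu_n r^n=\int_0^1 \frac{d\mu(t)}{1-tr},
\]
the two right-hand expressions being equal, so everything comes down to the behaviour of $\sum_n \mu_n r^n$ as $r\to 1^-$.

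For part (i), since $\mu$ is a Carleson measure, \cite[Lemma 2]{GGM} gives $\mu_n=O(1/n)$; together with $\mu_n\le \mu_0$ this yields $|C_\mu(1)(z)|\le \mu_0 + C\sum_{n\ge 1}r^n/n=\mu_0 + C\,|\log(1-r)|$. Multiplying by $w(r)$ and using both $w(r)\to 0$ and the hypothesis $w(r)|\log(1-r)|\to 0$ as $r\to 1^-$ gives $w(z)|C_\mu(1)(z)|\to 0$, hence $C_\mu(1)\in H_w^0$. For a standard weight the hypothesis is immediate, since $(1-r)^\gamma |\log(1-r)|\to 0$ for every $\gamma>0$.

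For part (ii) the growth is polynomial rather than logarithmic. First I would record the moment bound $\mu_n=O(n^{-s})$: writing $F(t)=\mu([t,1))\le C(1-t)^s$ and using the Fubini identity $\mu_n=n\int_0^1 s^{n-1}F(s)\,ds$ ($n\ge1$), one gets $\mu_n\le Cn\,B(n,s+1)$, and the Beta-function asymptotics $nB(n,s+1)\sim \Gamma(s+1)\,n^{-s}$ give $\mu_n=O(n^{-s})$. A standard Abelian estimate then yields $\sum_{n\ge 1} n^{-s}r^n\cong (1-r)^{s-1}$, so $|C_\mu(1)(z)|\lesssim (1-r)^{s-1}$. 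Alternatively, and avoiding the Abelian lemma, I would work directly from the integral representation: integrating by parts in $\int_0^1 (1-tr)^{-1}d\mu(t)$ produces $|C_\mu(1)(z)|\le \mu_0+C\int_0^1 \frac{(1-s)^s}{(1-sr)^2}\,ds$, and the substitution $u=1-s$, $\varepsilon=1-r$ reduces the integral to $\varepsilon^{s-1}\int_0^\infty \frac{v^s}{(1+v)^2}\,dv$, the last integral being finite precisely because $0<s<1$. Either route gives $|C_\mu(1)(z)|\lesssim (1-r)^{s-1}$, whence $v_\gamma(r)|C_\mu(1)(z)|\lesssim (1-r)^{\gamma+s-1}\to 0$ exactly when $\gamma>1-s$, proving $C_\mu(1)\in H_{v_\gamma}^0$.

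The routine part is the two growth estimates; the one genuinely delicate point is pinning down the sharp exponent $s-1$ in part (ii), since this is what makes the threshold $\gamma>1-s$ tight. In the integral approach this is visible in the convergence of $\int_0^\infty v^s(1+v)^{-2}\,dv$, which holds for $0<s<1$ but fails at $s=1$, exactly the borderline where the polynomial growth of part (ii) degenerates into the logarithmic growth of part (i). I would therefore keep the cases $s=1$ and $0<s<1$ cleanly separated, presenting part (i) via the moment bound and part (ii) via the integral representation, which is the most self-contained.
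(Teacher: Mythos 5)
Your proposal is correct. Part (i) is essentially the paper's own argument: both use the moment bound $\mu_n\leq C/(n+1)$ from \cite[Lemma 2]{GGM} and sum the logarithmic series, then invoke the hypothesis on $w$. In part (ii), however, you take a genuinely different route. The paper avoids growth asymptotics altogether: it picks $p>1/s$ with $\frac{p-1}{p}<\gamma$, notes $(\mu_n)_n\in \ell_p$, and applies H\"older's inequality against $(z^n)_n\in\ell_q$ with $\|(z^n)_n\|_q=(1-|z|^q)^{-1/q}$, obtaining $|C_\mu(1)(z)|\lesssim (1-|z|)^{-(p-1)/p}$ with exponent $\frac{p-1}{p}$ strictly between $1-s$ and $\gamma$ --- a deliberately non-sharp bound that still settles every $\gamma>1-s$ because $p$ may be chosen after $\gamma$. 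You instead prove the sharp growth $|C_\mu(1)(z)|\lesssim (1-r)^{s-1}$, either via the moment asymptotics $\mu_n\leq CnB(n,s+1)=O(n^{-s})$ plus an Abelian estimate, or via integration by parts in the integral representation; the latter computation is essentially what the paper itself does later in Theorem \ref{Teorema cont}, (iv)$\Rightarrow$(v), specialized to this situation. Your route costs a little more machinery (Beta-function asymptotics or the Abelian lemma) but buys the endpoint information $C_\mu(1)\in H^\infty_{v_{1-s}}$ and makes the threshold $\gamma>1-s$ visibly sharp; it also supplies an explicit proof that an $s$-Carleson measure has $\mu_n=O(n^{-s})$, a fact the paper's assertion \lqq$(\mu_n)_n\in \ell_p$\rqq\ uses implicitly while only the converse implication is proved there or cited from \cite{CGP} and \cite{Blasco}. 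One cosmetic remark: in your Fubini identity the letter $s$ does double duty as the Carleson exponent and the integration variable; rename one of them.
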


\begin{proof}
(i) By assumption, there exists $C>0 $ such that $\mu_n\leq \frac{C}{n+1}$ for every $n\in \N$. So,  for $z\in \D, z\neq 0,$
$$w(z)|C_{\mu}(1)(z)|\leq w(z)\sum_{n=0}^{\infty}\mu_n|z|^n\leq Cw(z) \sum_{n=0}^{\infty}\frac{|z|^n}{n+1}=-Cw(z)\frac{\log(1-|z|)}{|z|}.$$
Now, the hypothesis on $w$  implies   $\lim_{|z|\rightarrow 1^-}w(z)|C_{\mu}(1)(z)|=0.$ \\
	
(ii) Fix $\gamma>1-s$ and  take $p\in \R$ such that $p>1/s$ and $(p-1)/p<\gamma$. This choice is possible since $\lim_{p\to \frac1s}\frac{p-1}{p}=1-s$.  Let $q:=p/(p-1)$. Then $(\mu_n)_n\in l_p$, $(z^n)_n\in l_q$ for any $z\in\D$  and $\|(z_n)_n\|_q=\frac{1}{(1-|z|^q)^{\frac1q}}$. We apply H\"older inequality to get
$$|C_\mu(1)(z)|\leq \sum_{n=0}^{\infty} \mu_n |z|^n\leq \|(\mu_n)_n\|_p\frac{1}{(1-|z|^q)^{\frac1q}}.$$
\noindent Since $\frac1q=\frac{p-1}{p}<\gamma$, this yields $|C_\mu(1)(z)|=o\left(\frac{1}{(1-|z|)^\gamma}\right)$ as $|z|\rightarrow 1^-$.  
	
\end{proof}

\begin{example}
Let $\mu$ be a positive finite Borel measure on $[0,1)$ with $\sum_{n=0}^\infty \mu_n = \infty$. Then $v(z):= 1/\sum_{n=0}^\infty \mu_n |z|^n, \ z \in \D,$ is a weight on $\D$. The function $f(z):= \sum_{n=0}^\infty \mu_n z^n = C_\mu(1)(z), \ z \in \D,$ belongs to $H(\D)$ and has radius of convergence $1$. Moreover, $f \in H_v^\infty \setminus H_v^0$, since $\max_{|z|=r} |f(z)| = \sum_{n=0}^\infty \mu_n r^n$ for each $0 \leq r < 1$. On the other hand, $C_{\mu}: H^\infty(\D) \rightarrow  H_v^\infty$ is continuous, because, for each $f \in  H^\infty(\D)$ and $z \in \D$, we have
$$
|C_\mu(f)(z)| \leq ||f||_\infty \int_0^1 \frac{d \mu(t)}{|1-tz|} \leq ||f||_\infty \int_0^1 (\sum_{n=0}^\infty t^n |z|^n) d \mu(t) \leq ||f||_\infty \frac{1}{v(|z|)}.
$$	
\end{example}

	\begin{proposition}\label{weaklycompact}
Let $\mu$ be a positive finite Borel measure on $[0,1)$ and $v, w$ be weights on $\D$ such that $C_\mu: H_{v}^0\to H_{w}^0$ is continuous. Then
		$C_\mu: H_{v}^0\to H_{w}^0$ and	$C_\mu: H_{v}^{\infty}\to H_{w}^{\infty}$  are  compact if and only if $C_\mu(H_v^\infty)\subseteq H_w^0.$
	\end{proposition}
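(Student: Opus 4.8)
The plan is to treat the two compactness assertions simultaneously and then prove the two implications separately, leaning throughout on Theorem~\ref{lemma_cont_Hv_Hv0}. First I would observe that, under the canonical isometries $(H_v^0)^{**}=H_v^\infty$ and $(H_w^0)^{**}=H_w^\infty$, that theorem identifies the bitranspose of $C_\mu\colon H_v^0\to H_w^0$ with $C_\mu\colon H_v^\infty\to H_w^\infty$. Since a bounded operator between Banach spaces is compact if and only if its bitranspose is (Schauder's theorem applied twice), the two compactness statements in the proposition are equivalent; hence it suffices to characterise compactness of $C_\mu\colon H_v^0\to H_w^0$.

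For the implication \emph{compactness} $\Rightarrow C_\mu(H_v^\infty)\subseteq H_w^0$ I would argue directly with sequences. Fix $f\in B^\infty_v$ and use the $\tau_0$-density of $B^0_v$ in $B^\infty_v$ to pick $(f_n)_n\subseteq B^0_v$ with $f_n\to f$ in $\tau_0$. Compactness of $C_\mu\colon H_v^0\to H_w^0$ yields a subsequence with $C_\mu f_{n_k}\to g$ in $\|\cdot\|_w$ for some $g\in H_w^0$ (the space being closed). Norm convergence in $H_w^0$ forces $\tau_0$-convergence (on each compact subset of $\D$ the weight $w$ is bounded below by a positive constant), while the $\tau_0$-continuity of $C_\mu\colon H(\D)\to H(\D)$ gives $C_\mu f_{n_k}\to C_\mu f$ in $\tau_0$. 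Uniqueness of $\tau_0$-limits then yields $C_\mu f=g\in H_w^0$, and since $f\in B^\infty_v$ is arbitrary, $C_\mu(H_v^\infty)\subseteq H_w^0$.

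The converse is the substantial direction. Here I would first recognise that $C_\mu(H_v^\infty)\subseteq H_w^0$ is exactly a weak compactness statement: by Gantmacher's theorem $C_\mu\colon H_v^0\to H_w^0$ is weakly compact if and only if its bitranspose maps $(H_v^0)^{**}$ into $H_w^0$, and by the identification above that bitranspose is $C_\mu\colon H_v^\infty\to H_w^\infty$. Thus the hypothesis says precisely that $C_\mu\colon H_v^0\to H_w^0$ is weakly compact, and the entire content of the claim becomes the upgrade from weak compactness to compactness. I would carry out this upgrade by passing to the transpose: by Gantmacher again $C_\mu^{*}\colon (H_w^0)^{*}\to (H_v^0)^{*}$ is weakly compact, so it maps the unit ball to a relatively weakly compact subset of $(H_v^0)^{*}$; because this dual has the Schur property (relatively weakly compact sets are there relatively norm compact, which for standard and, more generally, essential weights reflects $H_v^0\cong c_0$ and $(H_v^0)^{*}\cong \ell^1$), the image is relatively norm compact, whence $C_\mu^{*}$ is compact and, by Schauder, so is $C_\mu\colon H_v^0\to H_w^0$ (and with the first paragraph also $C_\mu\colon H_v^\infty\to H_w^\infty$).

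The step I expect to demand the most care is precisely this weak-to-norm compactness upgrade, since for a general operator weak compactness does \emph{not} imply compactness; it is here that the special structure of the weighted spaces is essential. An alternative, more hands-on route to the same upgrade is to view $C_\mu(B^\infty_v)$ inside $C_0(\D)$ through the isometry $f\mapsto wf$ and verify that it is bounded, equicontinuous (a normal-family argument via Montel), and \emph{uniformly} vanishing at $\partial\D$. The last property — extracting a uniform boundary estimate from the merely pointwise information that $C_\mu h\in H_w^0$ for each $h\in B^\infty_v$ — is the genuinely delicate point, equivalently the assertion that $C_\mu$ maps bounded $\tau_0$-null sequences in $H_v^\infty$ to $\|\cdot\|_w$-null sequences; the Gantmacher/Schur argument is attractive precisely because it packages this uniformity cleanly.
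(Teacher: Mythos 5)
Your proof is correct and follows essentially the same route as the paper: identification of the bitranspose with $C_\mu\colon H_v^\infty\to H_w^\infty$ via Theorem~\ref{lemma_cont_Hv_Hv0}, Gantmacher's characterization of weak compactness (the paper cites \cite[Proposition 17.2.7]{Jarchow}), the weak-to-norm upgrade via the Schur property of $(H_v^0)^*$ (exactly the content of the paper's citation \cite[Corollary 2.5]{Alex}), and Schauder applied twice to pass between the two compactness statements. One small remark: your hedge restricting the Schur property to standard or essential weights is unnecessary, since $H_v^0$ embeds into $c_0$ for every weight, so the upgrade holds in the full generality of the proposition, as the cited corollary asserts.
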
	
	\begin{proof}
	By   Theorem \ref{lemma_cont_Hv_Hv0}, the operators $C_{\mu}^{**}$ and $C_\mu$ coincide on $H_{v}^\infty$. Therefore $C_\mu(H_v^\infty)\subseteq H_w^0$ if and only if $C^{**}_{\mu}((H_v^0)^{**})\subseteq H_w^0$. This fact holds if and only if $C_\mu: H_{v}^0\to H_{w}^0$ is weakly compact by
 \cite[Proposition 17.2.7]{Jarchow}. Now \cite[Corollary 2.5]{Alex} implies that this is equivalent to the compactness of $C_\mu: H_{v}^0\to H_{w}^0$. Finally,  Theorem \ref{lemma_cont_Hv_Hv0} also yield that $C_\mu: H_{v}^{\infty}\to H_{w}^{\infty}$ is compact if and only if $C_\mu: H_{v}^0\to H_{w}^0$ is compact.
	\end{proof}

	\begin{proposition}\label{Caract_cont_gral}
		Let $\mu$ be a positive finite Borel measure on $[0,1)$ and $v, w$ be weights on $\D.$
		\begin{itemize}
			\item[(i)] If
			$C(v,w):=\sup_{0\leq r<1}w(r)\int_{0}^{1}\frac{d\mu(t)}{v(tr)(1-tr)}<\infty,$
			then  $C_\mu: H_{v}^\infty\to H_{w}^\infty$ is continuous.
			\item[(ii)] If  $\lim_{r\to 1^-}w(r)\int_0^1\frac{d\mu(t)}{v(tr)(1-tr)}=0,$
			then $C_\mu(H_v^\infty)\subseteq H_w^0$. In particular, the condition implies that $C_\mu: H_{v}^{\infty}\to H_{w}^0$ and $C_\mu: H_{v}^0\to H_{w}^0$ are  compact.
		\end{itemize}
		The conditions are also necessary if  the function $f:[0,1)\to \R_+$, $r\mapsto 1/v(r)$ admits holomorphic extension $\tilde{f}$ to $\D$ satisfying $|\tilde{f}(z)|\leq f(|z|)$ for every $z\in \D.$ In this case, $\|C_{\mu}\|= C(v,w).$
	\end{proposition}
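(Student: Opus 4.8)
The plan is to deduce everything from the integral representation $C_\mu(f)(z)=\int_0^1\frac{f(tz)}{1-tz}\,d\mu(t)$ recalled in the introduction, together with the two elementary facts that $v,w$ are radial and that $|1-tz|\ge 1-t|z|$ for $t\in[0,1)$, $z\in\D$. For the sufficiency in (i), I would fix $f\in H_v^\infty$ with $\|f\|_v\le 1$, so that $|f(tz)|\le 1/v(t|z|)$ by radiality, and, writing $r=|z|$, estimate
$$
w(z)|C_\mu(f)(z)|\le w(r)\int_0^1\frac{|f(tz)|}{|1-tz|}\,d\mu(t)\le w(r)\int_0^1\frac{d\mu(t)}{v(tr)(1-tr)}\le C(v,w).
$$
Taking the supremum over $z\in\D$ gives $\|C_\mu(f)\|_w\le C(v,w)$, which proves continuity and the bound $\|C_\mu\|\le C(v,w)$. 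For (ii) the identical chain of inequalities shows that if $r\mapsto w(r)\int_0^1 d\mu(t)/(v(tr)(1-tr))$ tends to $0$ as $r\to1^-$, then $w(z)|C_\mu(f)(z)|\to0$ as $|z|\to1^-$ for every $f$ in the unit ball, hence $C_\mu(H_v^\infty)\subseteq H_w^0$. For the ``in particular'' statement, note that the limit condition forces $C(v,w)<\infty$ (the expression is bounded near $1$ by the limit and elsewhere by continuity of the weights and positivity of $v,1-tr$), so $C_\mu\colon H_v^\infty\to H_w^\infty$ is continuous by (i); since $1\in H_v^\infty$ we get $C_\mu(1)\in H_w^0$, and Theorem \ref{lemma_cont_Hv_Hv0} then yields continuity of $C_\mu\colon H_v^0\to H_w^0$. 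With $C_\mu(H_v^\infty)\subseteq H_w^0$ in hand, Proposition \ref{weaklycompact} delivers the asserted compactness.

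For the necessity under the extension hypothesis, the key is that the holomorphic extension $\tilde f$ of $r\mapsto 1/v(r)$ is itself an extremal test function. Radiality gives $v(z)|\tilde f(z)|\le v(|z|)\cdot(1/v(|z|))=1$, so $\tilde f\in H_v^\infty$ with $\|\tilde f\|_v\le1$. Evaluating $C_\mu(\tilde f)$ at a real argument $r\in[0,1)$ and using $\tilde f(tr)=1/v(tr)$ (valid since $tr\in[0,1)$) yields the exact identity
$$
w(r)|C_\mu(\tilde f)(r)|=w(r)\int_0^1\frac{d\mu(t)}{v(tr)(1-tr)}.
$$
Hence $\|C_\mu(\tilde f)\|_w\ge C(v,w)$, and from $\|\tilde f\|_v\le1$ we obtain $\|C_\mu\|\ge C(v,w)$; this proves the necessity of $C(v,w)<\infty$ for continuity and, combined with (i), the equality $\|C_\mu\|=C(v,w)$. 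If moreover $C_\mu(H_v^\infty)\subseteq H_w^0$, then $C_\mu(\tilde f)\in H_w^0$ forces the left-hand side above to tend to $0$ as $r\to1^-$, which is exactly the limit condition in (ii).

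The sufficiency estimates are routine; the only genuinely delicate point is the necessity, which rests entirely on exhibiting a single function in the unit ball of $H_v^\infty$ that saturates the pointwise bound used in (i). The extension hypothesis $|\tilde f(z)|\le1/v(|z|)$ is precisely what supplies such a function, and the decisive observation is that restricting to real arguments collapses the integrand $\tilde f(tr)/(1-tr)$ to $1/(v(tr)(1-tr))$, reproducing the defining integral of $C(v,w)$ with no loss. This is why the lower bound obtained along the real axis matches the upper bound from (i) exactly, and is the one place where the special structure of $v$ is genuinely used.
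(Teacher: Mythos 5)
Your proof is correct and takes essentially the same approach as the paper: sufficiency via the integral representation and the pointwise estimate $|1-tz|\ge 1-t|z|$, necessity and the norm identity via the extremal function $\tilde f$ evaluated along the real axis, where the integrand collapses to $1/(v(tr)(1-tr))$, and the compactness claims via Theorem \ref{lemma_cont_Hv_Hv0} and Proposition \ref{weaklycompact}. The only difference is cosmetic: you make explicit a step the paper leaves implicit, namely that the limit condition in (ii) already forces $C(v,w)<\infty$ and hence continuity via (i).
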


	\begin{proof}
		Let $f\in H_v^{\infty}$ with $\|f\|_v\leq 1.$  For $0\leq r<1,$
		\begin{equation}\label{norm1}
			\sup_{z\in \D, |z|=r}w(z)\left|C_\mu f(z)\right|\leq \sup_{z\in \D, |z|=r}w(|z|)\int_0^1\frac{|f(tz)|}{(1-t|z|)}d\mu(t) \leq w(r)\int_0^1\frac{d\mu(t)}{v(tr)(1-tr)},
		\end{equation}
		so the condition in (i) yields the boundedness of $C_\mu: H_{v}^\infty\to H_{w}^\infty$  with $\|C_{\mu}\|\leq C(v,w),$ and the  condition in (ii), that $C_\mu(H_v^\infty)\subseteq H_w^0$, in particular, that $C_\mu(1)\in H_w^0.$ This fact, together with (i) and  Theorem \ref{lemma_cont_Hv_Hv0} implies the continuity of $C_\mu: H_{v}^0\to H_{w}^0.$  The  compactness of $C_\mu: H_{v}^0\to H_{w}^0$ and $C_\mu: H_{v}^{\infty}\to H_{w}^0$  follows by Proposition \ref{weaklycompact}.
		
		For the converses, assume  the function $f: [0,1)\to \R_+$, $r\mapsto 1/v(r)$ admits holomorphic extension $\tilde{f}$ to $\D$ satisfying $|\tilde{f}(z)|\leq f(|z|)$ for every $z\in \D.$ As $v(|z|)|\tilde{f}(z)|\leq v(|z|)f(|z|)$ and $v(r)f(r)=1,$ $0\leq r<1,$ we get $\|\tilde{f}\|_v=1.$ Moreover,
		\begin{equation}\label{norm2}
			w(r)\int_{0}^{1}\frac{d\mu(t)}{v(tr)(1-tr)}\leq  \sup_{z\in \D, |z|=r}w(z)\left|\int_0^1\frac{\tilde{f}(tz)}{(1-tz)}d\mu(t)     \right|= \sup_{z\in \D, |z|=r}w(z)|C_{\mu}(\tilde{f})|.
		\end{equation}
		So,  if we assume $C_{\mu}(\tilde{f})\in H_{v}^\infty,$ we get the condition in (i) is also necessary, and analogously for (ii) if  we assume $C_{\mu}(\tilde{f})\in H_{v}^0.$  The conclusion about the norm of the operator follows from  (\ref{norm1}) and  (\ref{norm2}).
	\end{proof}
	
	\begin{remark}
		Taking $\tilde{f}(z)=1/(1-z)^{\gamma}$, $z\in \D,$ we get that the necessity in  Proposition \ref{Caract_cont_gral} applies to standard weights $\vg(z)=(1-|z|)^{\gamma}$, $\gamma>0.$
	\end{remark}

	\begin{corollary}\label{Corollary_cont_gral}
		Let $\mu$ be a positive finite Borel measure on $[0,1)$ and $v, w$ be weights on $\D$ such that $w(|z|)\lesssim v(|z|)(1-|z|)^{\delta}, z\in \D.$
		\begin{itemize}
			\item [(i)]	If $\delta> 1,$ then $C_\mu: H_{v}^{\infty}\to H_{w}^0$ and $C_\mu: H_{v}^0\to H_{w}^0$ are continuous and compact.
			\item [(ii)]	If $\delta= 1,$ then $C_\mu: H_{v}^\infty\to H_{w}^\infty$ is continuous.
			\item[(iii)] If $0\leq\delta< 1,$ then:
			\begin{itemize}
				\item[(a)] If	$\sup_{0\leq r<1}v(r)(1-r)^{\delta}\int_{0}^{1}\frac{d\mu(t)}{v(tr)(1-tr)}<\infty,$ then $C_\mu: H_{v}^\infty\to H_{w}^\infty$ is continuous.
				\item[(b)] If	$\lim_{r\rightarrow 1^-}v(r)(1-r)^{\delta}\int_{0}^{1}\frac{d\mu(t)}{v(tr)(1-tr)}=0,$ then $C_\mu: H_{v}^{\infty}\to H_{w}^0$ and $C_\mu: H_{v}^0\to H_{w}^0$ are  compact.
			\end{itemize}

		\end{itemize}
		
	\end{corollary}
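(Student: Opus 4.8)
The plan is to deduce every assertion from Proposition \ref{Caract_cont_gral} by estimating the quantity $w(r)\int_0^1 \frac{d\mu(t)}{v(tr)(1-tr)}$ under the standing hypothesis, which I write as $w(r)\le C\, v(r)(1-r)^{\delta}$ for some $C>0$ and all $0\le r<1$. First I would treat parts (i) and (ii) together, reducing the integral by two elementary monotonicity facts. Since $v$ is decreasing in the modulus and $tr\le r$ for $t\in[0,1)$, we have $v(tr)\ge v(r)$, hence $v(r)/v(tr)\le 1$; and since $t\le 1$ we have $1-tr\ge 1-r>0$, hence $\tfrac{1}{1-tr}\le\tfrac{1}{1-r}$. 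Combining these yields
$$
w(r)\int_0^1 \frac{d\mu(t)}{v(tr)(1-tr)} \le C\,(1-r)^{\delta}\int_0^1 \frac{v(r)}{v(tr)}\frac{d\mu(t)}{1-tr} \le C\,(1-r)^{\delta}\int_0^1 \frac{d\mu(t)}{1-tr} \le C\,\mu([0,1))\,(1-r)^{\delta-1}.
$$

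For $\delta>1$ the right-hand side tends to $0$ as $r\to 1^-$, so the hypothesis of Proposition \ref{Caract_cont_gral}(ii) holds; this gives $C_\mu(H_v^\infty)\subseteq H_w^0$ and, by that same statement, the compactness of both $C_\mu:H_v^\infty\to H_w^0$ and $C_\mu:H_v^0\to H_w^0$, proving (i). For $\delta=1$ the right-hand side is bounded above by $C\,\mu([0,1))<\infty$, so $C(v,w)<\infty$ and Proposition \ref{Caract_cont_gral}(i) delivers the continuity of $C_\mu:H_v^\infty\to H_w^\infty$, proving (ii).

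For part (iii) there is nothing further to estimate: the inequality $w(r)\le C\,v(r)(1-r)^{\delta}$ gives directly
$$
w(r)\int_0^1 \frac{d\mu(t)}{v(tr)(1-tr)} \le C\, v(r)(1-r)^{\delta}\int_0^1 \frac{d\mu(t)}{v(tr)(1-tr)},
$$
so the assumption in (a) forces $C(v,w)<\infty$ and Proposition \ref{Caract_cont_gral}(i) yields continuity, while the assumption in (b) forces $\lim_{r\to 1^-} w(r)\int_0^1 \frac{d\mu(t)}{v(tr)(1-tr)}=0$ and Proposition \ref{Caract_cont_gral}(ii) yields the stated compactness.

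The whole argument is essentially bookkeeping, so I do not expect a genuine obstacle. The only points that require care are the correct use of the monotonicity of $v$ to replace $v(tr)$ by $v(r)$ in the numerator, the crude bound $1-tr\ge 1-r$ (which is exactly what produces the decisive power $(1-r)^{\delta-1}$), and selecting the appropriate clause of Proposition \ref{Caract_cont_gral} for each regime of $\delta$.
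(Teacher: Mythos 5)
Your proof is correct and is precisely the argument the paper intends: the corollary is stated without proof as an immediate consequence of Proposition \ref{Caract_cont_gral}, obtained exactly as you do via the monotonicity bounds $v(tr)\geq v(r)$ and $1-tr\geq 1-r$ for parts (i)--(ii), and by direct substitution of the hypothesis $w(r)\lesssim v(r)(1-r)^{\delta}$ for part (iii). Nothing is missing, and the choice of clause of Proposition \ref{Caract_cont_gral} in each regime of $\delta$ is the right one.
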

	
	\begin{remark}\label{sum_mu_n_dona_continu}
		Observe that the condition	in Corollary \ref{Corollary_cont_gral}(iii)(a) is satisfied, for instance, if $\int_{0}^{1}\frac{d\mu(t)}{(1-t)^{1-\delta}}<\infty,$ or equivalently, if $C_{\mu}((1-z)^{\delta})\in H^{\infty}(\D).$  If $\delta=0,$ the boundedness of the integral is equivalent to $\sum_n\mu_n<\infty$ and to $C_{\mu}(1)\in H^{\infty}(\D)$ (\cite[Theorem 4]{GGM}),  and thus,  this condition implies $C_{\mu}$ is continuous on $H_{v}^\infty$ and on  $H_{v}^0$ by  Theorem \ref{lemma_cont_Hv_Hv0}.
	\end{remark}
	

Now we see the  condition $\sum_n\mu_n<\infty$  in Remark \ref{sum_mu_n_dona_continu} also implies compactness of $C_\mu$  on $H_v^\infty$ and on $H_v^0$. It  yields the equivalence of the boundedness and compactness of $C_\mu$ on  $H^\infty(\D)$ for the case $v\equiv 1.$ 	 To do this, we need a technical lemma, which generalizes the {\em Weak Convergence Theorem} appearing in \cite[Section 2.4]{Shapiro}, widely used in the context of composition operators.   We recall that a Fr\'echet  Montel space is a complete metrizable locally convex space in which every closed and bounded set is compact (cf. \cite[Chapter 24]{meisevogt}).

\begin{lemma}
	\label{lemmacompact}
	Let $F$ be a Fr\'echet Montel space, and let $X,Y$ Banach spaces continuously embedded in $F$. Assume that $T:F\to F$ is continuous. If $T(X)\subseteq Y$ and $(T(f_n))_n$ is norm convergent to $0$ for each sequence $(f_n)_n$ bounded in $X$ and convergent to $0$ in $F$, then $T:X\to Y$ is a compact operator.
\end{lemma}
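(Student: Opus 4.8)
The plan is to establish compactness via the sequential criterion. Since $Y$ is a Banach space and $T$ is linear, $T:X\to Y$ is compact precisely when every sequence $(f_n)_n$ bounded in $X$ has a subsequence whose image under $T$ converges in the norm of $Y$. I would note at the outset that verifying this criterion also yields, for free, that $T:X\to Y$ is bounded: once $T(B_X)$ is shown to be relatively compact in $Y$ it is in particular bounded, so no separate closed-graph argument for continuity is needed.

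First I would fix a sequence $(f_n)_n$ bounded in $X$, say $\|f_n\|_X\leq M$ for all $n$, and transport this boundedness into $F$. Because $X$ is continuously embedded in $F$, the sequence $(f_n)_n$ is bounded in $F$; since $F$ is Montel its bounded subsets are relatively compact, and since $F$ is metrizable this relative compactness is sequential. Hence there is a subsequence $(f_{n_k})_k$ converging in $F$ to some $g\in F$. I set $g_k:=f_{n_k}$ for brevity.

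The core of the argument is to show that $(T g_k)_k$ is Cauchy in $Y$, and here is where the hypothesis is invoked through a differences trick. Suppose, for contradiction, that $(T g_k)_k$ is not Cauchy; then there are $\eps>0$ and indices $k_i,l_i\to\infty$ with $\|T g_{k_i}-T g_{l_i}\|_Y\geq\eps$ for all $i$. Putting $d_i:=g_{k_i}-g_{l_i}$, the sequence $(d_i)_i$ is bounded in $X$ (by $2M$), and since $g_{k_i}\to g$ and $g_{l_i}\to g$ in $F$ we get $d_i\to 0$ in $F$. The lemma's hypothesis then forces $(T d_i)_i$ to be norm convergent to $0$ in $Y$; but by linearity $T d_i=T g_{k_i}-T g_{l_i}$, so $\|T d_i\|_Y\geq\eps$ for every $i$, a contradiction. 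Therefore $(T g_k)_k$ is Cauchy, and completeness of $Y$ gives a norm limit in $Y$. This supplies the required convergent subsequence of $(T f_n)_n$, proving compactness.

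The main obstacle is precisely the passage from the double-indexed family of differences $g_{k_i}-g_{l_i}$ back to the single-sequence formulation of the hypothesis; the device of forming the differences $d_i$ and exploiting the linearity of $T$ is exactly what bridges this gap. I would emphasize that the continuous embedding $X\hookrightarrow F$ is what lets boundedness in $X$ be fed into the Montel property of $F$, while the completeness of $Y$ is what upgrades the Cauchy conclusion to genuine norm convergence.
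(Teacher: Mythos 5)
Your proof is correct and follows essentially the same route as the paper's: both extract, via the Montel property of $F$, a subsequence of a bounded sequence in $X$ that converges in $F$, and both apply the hypothesis to differences of its terms (bounded in $X$, tending to $0$ in $F$) to contradict an $\eps$-separation of the images. The only cosmetic differences are that the paper argues contrapositively (assuming $T(B_X)$ not precompact and using consecutive halved differences, with continuity of $T:X\to Y$ from the closed graph theorem) whereas you argue directly via a Cauchy criterion and correctly note that boundedness comes for free from compactness.
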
	

\begin{proof}
	Let $T$ be as in the hypothesis. The closed graph theorem yields that $T:X\to Y$ is continuous. Assume  $T(B_X)$ is not precompact. Hence, there exists $\varepsilon>0$ and a sequence $(f_n)_n\subseteq B_X$  such that $\|T(f_n)-T(f_{m})\|>\eps$ for all $m\neq n$. Since $B_X$ is relatively compact in $F$, we can assume, by passing to a subsequence, that $(f_n)_n$ is convergent in $F$. Hence, $\frac12(f_n-f_{n+1})$ is convergent to 0 in $F$, and we get a contradiction with the assumption, as for each $n\in\N$, it holds
	$\left\|T\left(\frac12(f_n-f_{n+1})\right)\right\|>\frac\eps 2. $
\end{proof}

\begin{theorem}\label{sum_mu_n_implica_compact}
	Let $\mu$ be a positive finite Borel measure on $[0,1).$ If $\sum_{n=0}^{\infty}\mu_n<\infty,$ then $C_{\mu}: H_{v}^\infty \rightarrow H_{v}^\infty,$ $C_{\mu}: H_{v}^0\to H_{v}^0$ and  $C_\mu: H^\infty(\D)\rightarrow H^\infty(\D)$  are compact.	
\end{theorem}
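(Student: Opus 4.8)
The plan is to apply Lemma \ref{lemmacompact} with the Fr\'echet Montel space $F=(H(\D),\tau_0)$ and $X=Y$ taken in turn to be $H^\infty(\D)$, $H_v^\infty$ and $H_v^0$. In each case the continuous map $T$ is $C_\mu\colon H(\D)\to H(\D)$, and the inclusion $T(X)\subseteq Y$ is precisely the continuity provided by Remark \ref{sum_mu_n_dona_continu} (together with Theorem \ref{lemma_cont_Hv_Hv0} in the $H_v^0$ case, where one also notes that $C_\mu(1)=\sum_{n}\mu_n z^n$ converges uniformly on $\overline{\D}$ when $\sum_n\mu_n<\infty$, hence lies in $H_v^0$). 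Thus the whole statement reduces to verifying the single remaining hypothesis of Lemma \ref{lemmacompact}: whenever $(f_n)_n$ is bounded in the relevant Banach space and $f_n\to 0$ in $\tau_0$, then $\|C_\mu(f_n)\|\to 0$ in the corresponding norm.

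The arithmetic key is the identity $\int_0^1\frac{d\mu(t)}{1-t}=\sum_{n=0}^\infty\mu_n$, obtained by expanding $\frac{1}{1-t}=\sum_n t^n$ and interchanging sum and integral by Tonelli's theorem, all terms being positive. Under the hypothesis $\sum_n\mu_n<\infty$ this integral is finite, so its tail vanishes: $\int_\rho^1\frac{d\mu(t)}{1-t}\to 0$ as $\rho\to 1^-$. This is exactly what tames the otherwise unbounded kernel $1/(1-tz)$ appearing in the integral representation $C_\mu(f)(z)=\int_0^1\frac{f(tz)}{1-tz}\,d\mu(t)$.

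For the norm estimate I argue uniformly in $z\in\D$. Given $\varepsilon>0$, fix $\rho\in(0,1)$ with $\int_\rho^1\frac{d\mu(t)}{1-t}<\varepsilon$ and split the integral representation at $t=\rho$. On $[\rho,1)$ I bound $|f_n(tz)|$ by $\|f_n\|_v/v(tz)$ (respectively by $\|f_n\|_\infty$ in the $H^\infty(\D)$ case), and then use $v(tz)\geq v(z)$ together with $|1-tz|\geq 1-t|z|\geq 1-t$ to bound this part by $\int_\rho^1\frac{d\mu(t)}{1-t}<\varepsilon$, uniformly in $z$ and in $n$. On $[0,\rho]$ the factor $1/|1-tz|$ is at most $1/(1-\rho)$ and $v(z)\leq v(0)$, while $tz$ ranges over the compact disc $\{|w|\leq\rho\}$; since $f_n\to 0$ uniformly on compacta, $\sup_{|w|\leq\rho}|f_n(w)|\to 0$, so this part tends to $0$ as $n\to\infty$, again uniformly in $z$. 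Combining the two parts yields $\|C_\mu(f_n)\|\lesssim\varepsilon$ for $n$ large, which proves the required norm convergence and hence, via Lemma \ref{lemmacompact}, the compactness of $C_\mu$ on all three spaces.

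The only genuine obstacle is the singularity of the kernel near $t=z=1$, which rules out a direct dominated-convergence argument; the identity $\int_0^1\frac{d\mu(t)}{1-t}=\sum_n\mu_n$ is precisely the device that converts the hypothesis $\sum_n\mu_n<\infty$ into integrability of the kernel against $\mu$, after which the interior/near-boundary splitting is routine. As an alternative to reapplying the lemma, once compactness on $H_v^\infty$ is established one may deduce compactness on $H_v^0$ directly from Proposition \ref{weaklycompact}.
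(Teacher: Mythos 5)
Your proof is correct and takes essentially the same route as the paper's: both reduce the statement to Lemma \ref{lemmacompact} and verify the norm-null condition by splitting the integral representation at a threshold $\rho$ (the paper's $s_0$) chosen so that $\int_\rho^1\frac{d\mu(t)}{1-t}$ is small, bounding the outer piece uniformly via $v(z)\leq v(tz)$ and $|1-tz|\geq 1-t$, and the inner piece via uniform convergence of $(f_n)_n$ on the compact disc $\rho\overline{\D}$. The only immaterial differences are that you derive $\int_0^1\frac{d\mu(t)}{1-t}=\sum_{n}\mu_n$ directly by Tonelli rather than citing \cite{GGM}, and you make explicit the verification that $C_\mu(1)\in H_v^0$ needed for Theorem \ref{lemma_cont_Hv_Hv0}.
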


\begin{proof}
	Let $v$ be a weight, including also the special case $v\equiv 1$ for which $H_{v}^\infty=H^\infty(\D).$ 		Let us assume without loss of generality that $\mu([0,1))=1$. By Remark \ref{sum_mu_n_dona_continu} and \cite[Theorem 4]{GGM},  $C_{\mu}$ is continuous on the Banach spaces considered, so by Lemma \ref{lemmacompact} we only need to show that $(\|C_\mu(f_n)\|_v)_n$ is convergent to 0 whenever $(f_n)_n\subseteq B_{H_v^\infty(\D)}$ is convergent to 0 in the compact open topology. Let $\eps>0$. The hypothesis yields  $\int_{0}^{1}\frac{d\mu(t)}{1-t}<\infty$ (\cite[Theorem 4]{GGM}). So, there is $s_0\in (0,1)$ such that
	\begin{equation}
		\label{p1}
		\int_{s_0}^{1}\frac{d\mu(t)}{1-t}<\frac{\eps}{2}.
	\end{equation}
	We have that
	\begin{equation}
		\label{p2}
		|C_\mu(f)(z)|=\left|\int_0^1\frac{f(tz)}{1-tz}d\mu(t)\right|
	\end{equation}
	\noindent  for all $f\in H_v^\infty$, $z\in \D$. For $z\in \D$, $t\in [s_0,1]$ and $f\in B_{H_v^\infty}$ we have $v(tz)|\frac{f(tz)}{1-tz} |\leq \frac{1}{1-t}$. Hence, by \eqref{p1}, for each $n\in\N$ we have
	
	\begin{equation}
		\label{p3}
		 v(z)\left|\int_{s_0}^1\frac{f_n(tz)}{1-tz}d\mu(t)\right|<\frac{\eps}{2}.
	\end{equation}
	Take $n_0\in \N$ such that, for every $n\geq n_0,$  $v(\omega)|f_n(\omega)|<\frac{\eps(1-s_0)}{2}$ for every $|\omega|\leq s_0.$	 For $z\in \D$ and $n\geq n_0$ we also have
	
	\begin{equation}
		\label{p4}
		 v(z)\left|\int_0^{s_0}\frac{f_n(tz)}{1-tz}d\mu(t)\right|\leq\int_0^{s_0}\frac{\max_{\omega\in s_0\overline{\D}} v(\omega)|f_n(\omega)|}{1-s_0}d\mu(t)< \frac{\eps}{2}.
	\end{equation}
	Putting together \eqref{p2}, \eqref{p3} and \eqref{p4} we conclude that $\|C_\mu(f_n)\|_v<\eps$ for $n\geq n_0$, and so, $C_\mu: H^\infty(\D)\rightarrow H^\infty(\D)$,   $C_{\mu}: H_{v}^\infty \rightarrow H_{v}^\infty$  and $C_{\mu}: H_{v}^0\to H_{v}^0$   are compact.	
\end{proof}

Theorem \ref{sum_mu_n_implica_compact} for $H^\infty(\D)$ permits us to conclude the following  theorem, which   shows that compactness is equivalent to continuity for $C_\mu: H^\infty(\D)\rightarrow H^\infty(\D)$. This continuity was characterized with several conditions in \cite[Theorem 4]{GGM} which we repeat in the statement.

\begin{theorem}\label{Cont_Hinfty}
	Let $\mu$ be a positive finite Borel measure on $[0,1).$ The following conditions are equivalent:
	\begin{itemize}
		\item[(i)] $C_\mu: H^\infty(\D)\rightarrow H^\infty(\D)$ is continuous.
		\item[(ii)] $C_\mu: H^\infty(\D)\rightarrow H^\infty(\D)$ is compact.
		\item[(iii)] $C_\mu(1)\in H^\infty(\D).$
		\item[(iv)] $\int_{0}^{1}\frac{d\mu(t)}{1-t}<\infty.$
		\item[(v)] $\sum_{n=0}^{\infty}\mu_n<\infty.$
	\end{itemize}
\end{theorem}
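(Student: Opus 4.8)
The plan is to close a short cycle of implications by combining two facts already at hand. The four conditions (i), (iii), (iv), (v) are mutually equivalent by \cite[Theorem 4]{GGM}; these are exactly the characterizations of continuity of $C_\mu$ on $H^\infty(\D)$ that are being restated here, so I invoke that result wholesale and treat their equivalence as given. It then remains only to insert compactness, condition (ii), into the chain.

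One direction is immediate: (ii) $\Rightarrow$ (i), since a compact operator between Banach spaces is automatically bounded and hence continuous. For the reverse insertion I would use (v) $\Rightarrow$ (ii), which is nothing but Theorem \ref{sum_mu_n_implica_compact} specialized to the constant weight $v \equiv 1$, for which $H_v^\infty = H^\infty(\D)$. That theorem already asserts that $\sum_{n=0}^\infty \mu_n < \infty$ forces $C_\mu : H^\infty(\D) \to H^\infty(\D)$ to be compact, so no new argument is needed at this point.

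Chaining these gives $\text{(v)} \Rightarrow \text{(ii)} \Rightarrow \text{(i)} \Rightarrow \cdots \Rightarrow \text{(v)}$ through the \cite{GGM} equivalences, which establishes the full set of equivalences. The genuine difficulty of the whole package lies upstream, in the proof of Theorem \ref{sum_mu_n_implica_compact}: there one reduces compactness to the criterion of Lemma \ref{lemmacompact} (norm convergence of $C_\mu(f_n)$ for bounded sequences $(f_n)_n$ tending to $0$ in the compact-open topology) and controls the Cesàro integral by splitting $[0,1]$ at a threshold $s_0$ extracted from the convergence of $\int_0^1 (1-t)^{-1}\, d\mu(t)$. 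Since that work has already been carried out, at the level of the present theorem there is no remaining obstacle --- only the bookkeeping of assembling the cited equivalences with the new compactness statement.
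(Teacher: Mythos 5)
Your proof is correct and follows essentially the same route as the paper: the paper likewise takes the equivalence of (i), (iii), (iv) and (v) directly from \cite[Theorem 4]{GGM} and inserts compactness via Theorem \ref{sum_mu_n_implica_compact} applied with $v\equiv 1$ (i.e., to $H^\infty(\D)$), the implication (ii) $\Rightarrow$ (i) being trivial. Nothing is missing in your assembly of the cited results.
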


	Observe that  a positive finite Borel measure $\mu$  on $[0,1)$ such that $\sum_n \mu_n<\infty$ is  a vanishing Carleson measure by \cite[Theorem 3.3.1]{Knopp}.

	\begin{lemma}\label{PropNeces1}
		Let $\mu$ be a positive finite Borel measure on $[0,1)$ and $v, w$ weights on $\D.$ If   $f(z)=\sum_{n=0}^{\infty}a_nz^n\in H_v^{\infty}$ is such that $a_n\in \R, a_n>0$ for every $n\in \N_0,$ then  the following hold:
		\begin{itemize}
			\item[(i)]  If  $C_\mu(f)\in H_{w}^\infty,$ then there exists $C>0$ such that $$\sup_n \mu_n \|z^n\|_w\left(\sum_{k=0}^{n}\sum_{j=0}^{k}a_j\right)\leq C.$$
			\item[(ii)]  If  $C_\mu(f)\in H_{w}^0,$ then for every $\eps>0$ there exist $n_0\in \N$ and $r_0\in [0,1)$ such that
			$$\sup_{n\geq n_0}\sup_{r_0\leq r<1} \mu_n w(r)r^n\left(\sum_{k=0}^{n}\sum_{j=0}^{k}a_j\right)\leq \eps.$$
		\end{itemize}
	\end{lemma}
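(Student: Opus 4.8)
The plan is to reduce both parts to a single pointwise master inequality that ties the iterated partial sums of the $a_j$ to the Taylor coefficients of $C_\mu(f)$. Write $b_n:=\sum_{j=0}^n a_j$ for the partial sums; then by definition $C_\mu(f)(z)=\sum_{n=0}^\infty \mu_n b_n z^n$, so its Taylor coefficients $d_n:=\mu_n b_n$ are nonnegative, and the double sum appearing in the statement is exactly the second iterate $\sum_{k=0}^n\sum_{j=0}^k a_j=\sum_{k=0}^n b_k=:c_n$. Everything then rests on two elementary observations.

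The first is that $(\mu_n)_n$ is decreasing, since $\mu_n=\int_0^1 t^n\,d\mu(t)$ and $t^n$ decreases in $n$ on $[0,1)$. Hence, using $b_k\ge 0$,
$$\mu_n c_n=\sum_{k=0}^n \mu_n b_k\le \sum_{k=0}^n \mu_k b_k=\sum_{k=0}^n d_k.$$
The second is the standard estimate for power series with nonnegative coefficients: for $0\le r<1$ one has $r^n\le r^k$ when $k\le n$, so
$$r^n\sum_{k=0}^n d_k\le \sum_{k=0}^n d_k r^k\le \sum_{k=0}^\infty d_k r^k=C_\mu(f)(r).$$
Combining these, and noting that $C_\mu(f)(r)\ge 0$ equals $|C_\mu(f)(r)|$ on $[0,1)$, I obtain for every $n$ and every $0\le r<1$ the master inequality
$$\mu_n\,w(r)\,r^n\,c_n\le w(r)\,r^n\sum_{k=0}^n d_k\le w(r)\,\bigl|C_\mu(f)(r)\bigr|.$$

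Both conclusions follow at once. For (i), if $C_\mu(f)\in H_w^\infty$ the right-hand side is dominated by $\|C_\mu(f)\|_w$; since $\mu_n$ and $c_n$ do not depend on $r$, taking the supremum over $r$ on the left gives $\mu_n\|z^n\|_w c_n\le \|C_\mu(f)\|_w$ for all $n$, which is the claim with $C:=\|C_\mu(f)\|_w$. For (ii), if $C_\mu(f)\in H_w^0$ then $w(r)|C_\mu(f)(r)|\to 0$ as $r\to 1^-$, so for given $\eps>0$ there is $r_0\in[0,1)$ with $w(r)|C_\mu(f)(r)|\le \eps$ whenever $r_0\le r<1$; the master inequality then bounds $\mu_n w(r)r^n c_n$ by $\eps$ uniformly in $n$ and in $r\in[r_0,1)$, so the desired supremum is $\le \eps$ (any $n_0$, e.g.\ $n_0=1$, serves).

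The only genuinely substantive step is the combinatorial/monotonicity passage: recognizing the double sum as $\sum_{k=0}^n b_k$ and exploiting the monotonicity of the moment sequence $(\mu_n)_n$ to dominate $\mu_n c_n$ by the partial sums $\sum_{k=0}^n d_k$ of the coefficients of $C_\mu(f)$. Once that is in place, the transition to $w$-norms is routine, and the same master inequality drives both the boundedness estimate in (i) and the vanishing estimate in (ii); the positivity assumption $a_n>0$ is used only to guarantee $b_k\ge 0$ and $C_\mu(f)(r)\ge 0$.
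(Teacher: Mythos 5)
Your proposal is correct and takes essentially the same route as the paper's proof: both arguments rest on the monotonicity of $(\mu_k)_k$ and $(r^k)_k$ together with the positivity of all coefficients to obtain the pointwise estimate $\mu_n w(r) r^n \sum_{k=0}^{n}\sum_{j=0}^{k}a_j \le w(r)\sum_{k=0}^{n}\mu_k\bigl(\sum_{j=0}^{k}a_j\bigr)r^k \le w(r)\,|C_\mu(f)(r)|$ and then take suprema in $r$ over $[0,1)$ (for (i)) or over $[r_0,1)$ (for (ii)). The only cosmetic differences are that you make explicit the domination of the partial sums by the full series, which the paper leaves implicit, and you note that in (ii) any $n_0$ serves, which is consistent with the paper's formulation.
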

	
	\begin{proof}
		Let $f(z)=\sum_{k=0}^{\infty}a_kz^k\in H_v^{\infty}$ be as in the hypothesis and consider $C_{\mu}(f)=\sum_{k=0}^{\infty}\mu_k \left(\sum_{j=0}^{k} a_j\right)z^k\in H(\D).$ 	As  $(\mu_k)_k$ and $(r^k)_k,$ $0\leq r<1,$ are decreasing, 	given $n\in \N$ and $0\leq r_0<1$ we have
		\begin{eqnarray}
			&\sup_{r_0\leq |z|<1}& w(z)\left|\sum_{k=0}^{n}\mu_k\left(\sum_{j=0}^{k}a_j\right)z^k\right|= \sup_{r_0\leq r<1}w(r)\sum_{k=0}^{n}\mu_k\left(\sum_{j=0}^{k}a_j\right)r^k \\				
			&\geq & \sup_{r_0\leq r<1} \mu_n w(r)r^n\left(\sum_{k=0}^{n}\sum_{j=0}^{k}a_j\right). \nonumber
		\end{eqnarray}
		Thus, if $C_\mu(f)\in H_{w}^\infty,$ then the first supremum is bounded for $r_0=0$ and every $n\in \N.$ If $C_\mu(f)\in H_{w}^0,$ for every $\eps>0$ there exists $n_0\in \N$ and $r_0\in [0,1)$ such that it is bounded by $\eps$ for every $n\geq n_0.$
	\end{proof}

	\subsection{Classical growth Banach spaces of holomorphic functions}
	
	In the case of  weighted Banach spaces of holomorphic functions determined by  standard weights $\vg(r)=(1-r)^{\gamma},$ $0\leq r<1,\ \gamma>0,$ we  obtain a complete  characterization of the boundedness and compactness of the operator. The following theorem also provides a characterization of $s$-Carleson measures in terms of the boundedness of a family of integrals. The proof of (iii) $\Rightarrow$ (iv) was stated in \cite[Proposition 1]{CGP} without proof and a proof can be found in \cite[Corollary 4.6]{Blasco}. We include it for completeness.

	\begin{theorem}\label{Teorema cont}
		Let $\mu$ be a positive finite Borel measure on $[0,1)$  and $\gamma>0, \delta\in (-\gamma,1)$.  The following conditions are equivalent:
		\begin{itemize}
			\item[(i)] $C_\mu: H_{\vg}^\infty\rightarrow H_{\vgd}^\infty$ is a continuous operator
			\item[(ii)] $C_\mu\left(\frac{1}{(1-z)^{\gamma}}\right)\in H_{\vgd}^\infty$
			\item[(iii)] $\mu_n=O(\frac{1}{n^{1-\delta}})$
			\item[(iv)] $\mu $ is a $(1-\delta)$-Carleson measure
			\item[(v)] $\sup_{0\leq r<1}(1-r)^{\gamma+\delta} \int_{0}^{1}\frac{d\mu(t)}{(1-tr)^{\gamma+1}}<\infty$
			\item[(vi)] $C_\mu: H_{\vg}^0\rightarrow H_{\vgd}^0$ is a continuous operator.
		\end{itemize}
	\end{theorem}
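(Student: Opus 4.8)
The plan is to prove the chain $(i)\Rightarrow(ii)\Rightarrow(iii)\Rightarrow(v)\Rightarrow(i)$, then the loop $(iii)\Leftrightarrow(iv)$, and finally to treat $(i)\Leftrightarrow(vi)$ separately, the latter linking the $H^\infty$- and $H^0$-statements through Theorem \ref{lemma_cont_Hv_Hv0}. The implication $(i)\Rightarrow(ii)$ is immediate: the function $g(z):=1/(1-z)^{\gamma}$ satisfies $\|g\|_{\vg}=1$ (because $|1-z|\ge 1-|z|$), so $g\in H_{\vg}^\infty$ and continuity of $C_\mu$ forces $C_\mu(g)\in H_{\vgd}^\infty$. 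For $(ii)\Rightarrow(iii)$ I would feed $g=\sum_{n\ge 0}a_nz^n$, with $a_n=\binom{n+\gamma-1}{n}>0$, into Lemma \ref{PropNeces1}(i). Using $a_n\cong n^{\gamma-1}$ and the generating-function identities $\sum_{j=0}^{k}a_j=\binom{k+\gamma}{k}$ and $\sum_{k=0}^{n}\binom{k+\gamma}{k}=\binom{n+\gamma+1}{n}\cong n^{\gamma+1}$, together with the monomial growth estimate $\|z^n\|_{\vgd}=\sup_{0\le r<1}(1-r)^{\gamma+\delta}r^n\cong n^{-(\gamma+\delta)}$, the bound furnished by Lemma \ref{PropNeces1}(i) collapses to $\mu_n\,n^{-(\gamma+\delta)}\,n^{\gamma+1}=\mu_n\,n^{1-\delta}=O(1)$, which is exactly $(iii)$.

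For $(iii)\Rightarrow(v)$ I would expand $(1-tr)^{-(\gamma+1)}=\sum_{n\ge0}\binom{n+\gamma}{n}(tr)^n$ and integrate term by term (all terms nonnegative), so that $\int_0^1(1-tr)^{-(\gamma+1)}\,d\mu(t)=\sum_{n\ge0}\binom{n+\gamma}{n}\mu_n r^n$; since $\binom{n+\gamma}{n}\cong n^{\gamma}$ and $\mu_n=O(n^{\delta-1})$, this series is dominated by $\sum_{n\ge0}n^{\gamma+\delta-1}r^n\cong(1-r)^{-(\gamma+\delta)}$ as $r\to1^-$, valid because $\gamma+\delta>0$, which is $(v)$. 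Then $(v)\Rightarrow(i)$ is precisely Proposition \ref{Caract_cont_gral}(i) for $v=\vg$, $w=\vgd$, since there $C(\vg,\vgd)=\sup_{0\le r<1}(1-r)^{\gamma+\delta}\int_0^1(1-tr)^{-(\gamma+1)}\,d\mu(t)$, the quantity in $(v)$. The equivalence $(iii)\Leftrightarrow(iv)$ is the classical moment/Carleson dictionary with exponent $s=1-\delta$, generalizing \cite[Lemma 2]{GGM}: for $(iv)\Rightarrow(iii)$ I would set $F(t):=\mu([t,1))=O((1-t)^{1-\delta})$, integrate by parts to get $\mu_n=n\int_0^1 t^{n-1}F(t)\,dt$, and compare with the Beta integral $n\,B(n,2-\delta)=\Gamma(2-\delta)\Gamma(n+1)/\Gamma(n+2-\delta)\cong n^{\delta-1}$; for $(iii)\Rightarrow(iv)$, recorded in \cite[Corollary 4.6]{Blasco}, I would use $\mu([t,1))\le t^{-n}\mu_n$ and choose $n\cong 1/(1-t)$ so that $t^n$ stays bounded below and $\mu_n=O(n^{\delta-1})\cong(1-t)^{1-\delta}$.

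For $(i)\Leftrightarrow(vi)$ I would apply Theorem \ref{lemma_cont_Hv_Hv0} with $v=\vg$, $w=\vgd$: statement $(vi)$ is equivalent to the conjunction of $(i)$ and $C_\mu(1)\in H_{\vgd}^0$, so $(vi)\Rightarrow(i)$ is automatic, and it remains to check that $(i)$ — equivalently $(iv)$ — already forces $C_\mu(1)\in H_{\vgd}^0$. This I would read off from Proposition \ref{exemples C_mu(1)}: when $\delta\le0$, a $(1-\delta)$-Carleson measure is a Carleson measure, so part $(i)$ applies to the standard weight $\vgd$ since $(1-r)^{\gamma+\delta}|\log(1-r)|\to0$; when $0<\delta<1$, $\mu$ is $s$-Carleson with $s=1-\delta\in(0,1)$ and part $(ii)$ applies, the admissible exponent $\gamma+\delta$ exceeding $1-s=\delta$ because $\gamma>0$.

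The technical heart is $(ii)\Rightarrow(iii)$, where three asymptotics — the coefficient growth $a_n\cong n^{\gamma-1}$, the double partial sum $\cong n^{\gamma+1}$, and the monomial norm $\|z^n\|_{\vgd}\cong n^{-(\gamma+\delta)}$ — must interlock so that the powers of $n$ cancel to leave exactly $\mu_n\,n^{1-\delta}$; keeping the exponents aligned (rather than off by one) is the delicate point, and the same $\cong$-estimates resurface, reversed, in $(iii)\Rightarrow(v)$. The Beta-integral and Stirling asymptotics underlying $(iii)\Leftrightarrow(iv)$ are routine once set up, but should be carried out through the $\Gamma$-function bounds rather than by hand.
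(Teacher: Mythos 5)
Your proposal is correct, and its skeleton coincides with the paper's: the same use of Lemma \ref{PropNeces1}(i) for (ii)$\Rightarrow$(iii) (your exact binomial identities $\sum_{j=0}^{k}\binom{j+\gamma-1}{j}=\binom{k+\gamma}{k}$ replace the paper's integral-comparison lower bound $\sum_{j=1}^m j^{\gamma-1}\gtrsim m^\gamma$, a cosmetic difference), the same appeal to Proposition \ref{Caract_cont_gral}(i) for (v)$\Rightarrow$(i), and the identical treatment of (vi) via Theorem \ref{lemma_cont_Hv_Hv0} together with Proposition \ref{exemples C_mu(1)}, split at $\delta\le 0$ versus $\delta\in(0,1)$ exactly as in the paper. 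Where you genuinely diverge is in the middle of the cycle. The paper proves (iii)$\Rightarrow$(iv)$\Rightarrow$(v), obtaining (v) by integrating $(1-tr)^{-(\gamma+1)}$ by parts against the distribution function $\mu([t,1))$ and feeding in the Carleson bound; you instead jump (iii)$\Rightarrow$(v) directly, expanding $(1-tr)^{-(\gamma+1)}=\sum_n\binom{n+\gamma}{n}(tr)^n$, integrating term by term (justified by positivity), and invoking the Abel-type asymptotic $\sum_{n\ge1}n^{\gamma+\delta-1}r^n\cong(1-r)^{-(\gamma+\delta)}$, valid since $\gamma+\delta>0$. You then prove (iii)$\Leftrightarrow$(iv) as a free-standing equivalence, supplying the direction (iv)$\Rightarrow$(iii) via $\mu_n=n\int_0^1t^{n-1}\mu([t,1))\,dt$ and the Beta asymptotic $\Gamma(2-\delta)\Gamma(n+1)/\Gamma(n+2-\delta)\cong n^{\delta-1}$ — a direction the paper never proves explicitly, since its cyclic scheme makes it a consequence of the whole chain; your (iii)$\Rightarrow$(iv) by choosing $n\cong 1/(1-t)$ is the paper's argument. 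The trade-off: your series route stays entirely at the level of moments and makes the two-sided moment/Carleson dictionary for every $s>0$ explicit (a genuine strengthening of the one-directional statements in \cite{GGM} and \cite{Blasco}), whereas the paper's integration-by-parts computation is the one that transfers verbatim, with $O$ replaced by $o$, to the compactness analogue (vi)$\Rightarrow$(vii) of Theorem \ref{Teorema compact}, which your term-by-term estimate would reproduce less directly (one would need a uniform-tail splitting of the series).
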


	\begin{proof}
		Let us see  that the first five items are equivalent:\\
		\noindent
		(i)$\Rightarrow$ (ii) is trivial, as  $f(z)=\frac{1}{(1-z)^{\gamma}},\ z\in \D,$  clearly belongs to $H_{\vg}^\infty.$
		
		\noindent
		(ii)$\Rightarrow$ (iii)
		It is known   that $f(z)=\frac{1}{(1-z)^{\gamma}},\ z\in \D,$  satisfies $f(z)=\sum_{n=0}^{\infty}a_n(\gamma)z^n,\ z\in \D,$ with $a_n(\gamma) >0,$ $a_n(\gamma)  \cong n^{\gamma-1}$ (eg.\cite[p.12]{GGM}). Hence, if  $C_\mu\left(\frac{1}{(1-z)^{\gamma}}\right)\in H_{\vgd}^\infty,$   by Lemma \ref{PropNeces1} there exists $C>0$ such that
		$$\sup_n \mu_n \|z^n\|_{\vgd} \left(\sum_{k=1}^{n}\sum_{j=1}^{k}j^{\gamma-1}\right)\leq C.$$
		As $\|z^n\|_{\vgd}=\frac{(\gamma+\delta)^{\gamma+\delta}}{(n+\gamma+\delta)^{\gamma+\delta}}\frac{n^n}{(n+\gamma+\delta)^n},$ we get that, for every $n\in \N,$
		\begin{eqnarray}\label{serie1}
			 \mu_n\frac{(\gamma+\delta)^{\gamma+\delta}}{(n+\gamma+\delta)^{\gamma+\delta}}\frac{n^n}{(n+\gamma+\delta)^n}\left(\sum_{k=1}^{n}\sum_{j=1}^{k}j^{\gamma-1}\right)\leq  C
		\end{eqnarray}
		Observe that, if $\gamma\geq 1$ then $\sum_{j=1}^{m}j^{\gamma-1}\geq \int_{0}^{m}x^{\gamma-1}dx=\frac{m^{\gamma}}{\gamma},$ and if $0<\gamma<1,$ then $\sum_{j=1}^{m}j^{\gamma-1}\geq \int_{0}^{m}(x+1)^{\gamma-1}dx=\frac{(m+1)^{\gamma}-1}{\gamma},$  $m\in \N.$  So,   $\sum_{j=1}^{m}j^{\gamma-1}\gtrsim m^{\gamma} $ for every $\gamma>0.$
		Hence,
		$$ \sum_{k=1}^{n}\sum_{j=1}^{k}j^{\gamma-1}\gtrsim \sum_{k=1}^{n}  k^{\gamma}\gtrsim n^{\gamma +1}$$
		As a consequence, we get that  there exists $C'>0$ such that, for every $n\in \N,$
		
		 $$\mu_n\frac{n^{\gamma+1}}{(n+\gamma+\delta)^{\gamma+\delta}}\frac{n^n}{(n+\gamma+\delta)^n}\leq C'$$
		Equivalently,
		$$ \mu_n \leq C' \left(\frac{n+\gamma+\delta}{n}\right)^{n+\gamma+\delta}\frac{1}{n^{1-\delta}}$$
		Since $\lim_n \left(\frac{n+\gamma+\delta}{n}\right)^{n+\gamma+\delta}=e^{\gamma+\delta},$ we get $\mu_n=O(\frac{1}{n^{1-\delta}}).$
		
		\noindent
		(iii)$\Rightarrow$(iv) Let us see that for every $s>0,$ if $\mu_n=O(\frac{1}{n^{s}}),$ then
		$\mu $ is an $s$-Carleson measure. Assume there exists $C>0$ such that $\mu_n\leq Cn^{-s}$ for every $n\in \N.$   It is enough to see that  there exists $D>0$ such that $ \mu([t,1))\leq D(1-t)^{s}$ for every $t\in [\frac{1}{2}, 1).$  Consider  $ \frac{1}{2}\leq t<1$ and take $n\in \N,$ $n\geq 2,$ such that  $1-\frac{1}{n}\leq t<1-\frac{1}{n+1}.$ Thus, there exists $C'>0$ such that $t^{n}\geq \left(1-\frac{1}{n}\right)^{n}\geq C'.$   Hence,  $$\mu([t,1))\leq\mu[1-\frac{1}{n},1)\leq \frac{1}{C'}\int_{1-\frac{1}{n}}^{1}t^{n}d\mu(t)\leq \frac{C}{C'}n^{-s}\leq D(n+1)^{-s}\leq D(1-t)^{s}$$
		for some $D>0,$ as we wanted to see.
		
		\noindent
		(iv)$\Rightarrow$(v) By hypothesis, there exists $C>0$ such that $\mu[t,1)\leq C(1-t)^{1-\delta}.$ Thus, using integration by parts we get
		\begin{eqnarray*}
			\int_0^1\frac{d\mu(t)}{(1-tr)^{\gamma+1}}&=&\mu([0,1))+\int_0^1 \left(\frac{1}{(1-tr)^{\gamma+1}}\right)'\mu([t,1))dt\\
			&= &\mu([0,1))+r(\gamma+1)\int_0^1 \frac{\mu([t,1))}{(1-tr)^{\gamma+2}}dt\\
			&\leq& \mu([0,1))+Cr(\gamma+1)\int_0^1 \frac{(1-t)^{1-\delta}}{(1-tr)^{\gamma+2}}dt\\ 	
			&\leq& \mu([0,1))+Cr(\gamma+1)\int_0^1 \frac{dt}{(1-tr)^{\gamma+\delta+1}}\\
			&\leq& 	\mu([0,1))+C\frac{\gamma+1}{\gamma +\delta}\ \frac{1}{(1-r)^{\gamma +\delta}},
		\end{eqnarray*}
		and the conclusion holds.
		
		\noindent
		(v)$\Rightarrow$(i) is a particular case of Proposition \ref{Caract_cont_gral}.\\
		To prove that the first five equivalent conditions are equivalent to (vi), by  Theorem \ref{lemma_cont_Hv_Hv0} we only need to show  that $C_\mu(1)\in H_{\vgd}^0$ when (i) holds. By the equivalence between conditions (i) and (iii), the condition $C_\mu(1)\in H_{\vgd}^0$ is satisfied for every $\gamma>0$   by Proposition \ref{exemples C_mu(1)}(i) when $-\gamma <\delta\leq 0$ and   by Proposition \ref{exemples C_mu(1)}(ii) when $\delta\in (0,1)$ (just take $s=1-\delta$ and consider $H_{\vgd}^0$ instead of $H_{\vg}^0$ in the proposition).
	\end{proof}

	The following theorem characterize the compactness of the operator and provides also a characterization of vanishing $s$-Carleson measures in terms of integrals, complementing the results in \cite[Proposition 1]{CGP},  \cite[Lemma 2]{GGM} and \cite[Corollary 4.6]{Blasco}.
\begin{theorem}\label{Teorema compact}
	Let $\mu$ be a positive finite Borel measure on $[0,1)$  and $\gamma>0, \delta\in (-\gamma,1)$.  The following conditions are equivalent:
	\begin{itemize}
		\item[(i)] $C_\mu: H_{\vg}^0\rightarrow H_{\vgd}^0$ is a compact operator
		\item[(ii)] $C_\mu: H_{\vg}^\infty\rightarrow H_{\vgd}^\infty$  is a compact operator
		\item[(iii)] $C_\mu(H_{\vg}^\infty)\subseteq  H_{\vgd}^0$
		\item[(iv)] $C_\mu\left(\frac{1}{(1-z)^{\gamma}}\right)\in H_{\vgd}^0$
		\item[(v)] $\mu_n=o(\frac{1}{n^{1-\delta}})$
		\item[(vi)] $\mu $ is a vanishing $(1-\delta)$-Carleson measure
		\item[(vii)] $\lim_{r\rightarrow 1^-}(1-r)^{\gamma+\delta} \int_{0}^{1}\frac{d\mu(t)}{(1-tr)^{\gamma+1}}=0$
	\end{itemize}
\end{theorem}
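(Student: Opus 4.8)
The plan is to treat this as the ``vanishing'' counterpart of Theorem \ref{Teorema cont}: I would prove the cyclic chain of the function-theoretic conditions (iii)$\Rightarrow$(iv)$\Rightarrow$(v)$\Rightarrow$(vi)$\Rightarrow$(vii)$\Rightarrow$(iii) by repeating each step of Theorem \ref{Teorema cont} with $O(\cdot)$ replaced by $o(\cdot)$ and suprema by limits, and then link the operator-theoretic statements (i) and (ii) to the inclusion (iii) through the abstract criterion in Proposition \ref{weaklycompact}. The only genuinely new analytic inputs beyond the bounded case are part (ii) of Lemma \ref{PropNeces1}, part (ii) of Proposition \ref{Caract_cont_gral}, and a uniform-in-$\eps$ rerun of the earlier estimates.

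For the operator statements, I would first observe that condition (vii) is exactly the hypothesis of Proposition \ref{Caract_cont_gral}(ii) with $v=\vg$ and $w=\vgd$, since there $1/(v(tr)(1-tr))=(1-tr)^{-(\gamma+1)}$ and $w(r)=(1-r)^{\gamma+\delta}$. Hence (vii) already yields the compactness of $C_\mu:H_{\vg}^0\to H_{\vgd}^0$ and of $C_\mu:H_{\vg}^\infty\to H_{\vgd}^0$; composing the latter with the isometric inclusion $H_{\vgd}^0\hookrightarrow H_{\vgd}^\infty$ gives (ii), and (i) is immediate, so (vii)$\Rightarrow$(i),(ii). Conversely, both (i) and (ii) force the continuity of $C_\mu:H_{\vg}^0\to H_{\vgd}^0$ --- directly in case (i), and in case (ii) through the equivalence (i)$\Leftrightarrow$(vi) of Theorem \ref{Teorema cont} --- so that Proposition \ref{weaklycompact} applies and identifies the compactness of $C_\mu$ with the inclusion $C_\mu(H_{\vg}^\infty)\subseteq H_{\vgd}^0$, i.e. with (iii). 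This provides (i)$\Rightarrow$(iii) and (ii)$\Rightarrow$(iii), folding (i) and (ii) into the cycle.

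For the function-theoretic cycle, (iii)$\Rightarrow$(iv) is immediate because $1/(1-z)^\gamma\in H_{\vg}^\infty$. For (iv)$\Rightarrow$(v) I would apply Lemma \ref{PropNeces1}(ii) to $f(z)=1/(1-z)^\gamma=\sum_n a_n(\gamma)z^n$, with $a_n(\gamma)>0$ and $a_n(\gamma)\cong n^{\gamma-1}$; using $\sum_{k=0}^n\sum_{j=0}^k a_j(\gamma)\gtrsim n^{\gamma+1}$ together with $\|z^n\|_{\vgd}\cong n^{-(\gamma+\delta)}$, the estimate collapses to $\mu_n n^{1-\delta}\le C\eps$ for all large $n$, whence $\mu_n=o(n^{\delta-1})$. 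Step (v)$\Rightarrow$(vi) repeats the block-summation argument of Theorem \ref{Teorema cont}: for $1-\frac1n\le t<1-\frac1{n+1}$ one has $\mu([t,1))\le \frac1{C'}\mu_n$ with $C'=\inf_n(1-\frac1n)^n>0$, and inserting $\mu_n\le\eps n^{\delta-1}$ for $n$ large gives $\mu([t,1))=o((1-t)^{1-\delta})$. Step (vi)$\Rightarrow$(vii) is the integration-by-parts computation of Theorem \ref{Teorema cont}, now split at a threshold $t_0$ beyond which $\mu([t,1))\le\eps(1-t)^{1-\delta}$: the tail contributes at most a fixed multiple of $\eps(1-r)^{-(\gamma+\delta)}$, while the head is bounded and hence annihilated by the factor $(1-r)^{\gamma+\delta}$ as $r\to1^-$ (here one uses $\gamma+\delta>0$), so that $\limsup_{r\to1^-}(1-r)^{\gamma+\delta}\int_0^1(1-tr)^{-(\gamma+1)}d\mu(t)\lesssim\eps$ and the limit is $0$. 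Finally (vii)$\Rightarrow$(iii) has already been recorded via Proposition \ref{Caract_cont_gral}(ii).

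I expect the main obstacle to be the clean execution of (iv)$\Rightarrow$(v). Lemma \ref{PropNeces1}(ii) only controls the supremum over a truncated range $r\in[r_0,1)$ and over $n\ge n_0$, so before extracting the sharp rate one must check that this truncated supremum still equals the full norm $\|z^n\|_{\vgd}$ for all large $n$; this holds precisely because the extremal radius $r_n=n/(n+\gamma+\delta)$ of $(1-r)^{\gamma+\delta}r^n$ tends to $1^-$, so that $r_n\ge r_0$ eventually. Once this is in place the parameter $\eps$ threads through unchanged, and the remaining steps are uniform-in-$\eps$ versions of the bounded estimates already carried out for Theorem \ref{Teorema cont}.
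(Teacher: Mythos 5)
Your proposal is correct and takes essentially the same route as the paper: the same cycle (iii)$\Rightarrow$(iv)$\Rightarrow$(v)$\Rightarrow$(vi)$\Rightarrow$(vii) with the same key inputs (Lemma \ref{PropNeces1}(ii), Proposition \ref{Caract_cont_gral}(ii), Proposition \ref{weaklycompact}, and the estimates recycled from Theorem \ref{Teorema cont}), including the crucial point you flag in (iv)$\Rightarrow$(v), which the paper handles identically by noting that the extremal radius $s_n=n/(n+\gamma+\delta)$ of $\|z^n\|_{\vgd}$ tends to $1^-$, so the truncated supremum of Lemma \ref{PropNeces1}(ii) eventually realizes the full norm. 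The only (cosmetic) difference is bookkeeping: the paper proves (i)$\Rightarrow$(ii) via the bitranspose identification of Theorem \ref{lemma_cont_Hv_Hv0} and closes a single chain (vii)$\Rightarrow$(i), whereas you obtain (ii) from (vii) by composing the compact operator into $H_{\vgd}^0$ with the inclusion $H_{\vgd}^0\hookrightarrow H_{\vgd}^\infty$ and fold both (i) and (ii) into (iii) through Proposition \ref{weaklycompact}.
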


\begin{proof}
	\noindent
	(i) $\Rightarrow$ (ii) follows from  Theorem \ref{lemma_cont_Hv_Hv0}, since the  bitranspose of $C_\mu: H_{\vg}^0\rightarrow H_{\vgd}^0$ is $C_\mu: H_{\vg}^\infty\rightarrow H_{\vgd}^\infty.$\\
	(ii) $\Rightarrow$ (iii) follows from  Proposition \ref{weaklycompact} and the equivalence between (i) and (vi) in Theorem \ref{Teorema cont}.\\ (iii)$\Rightarrow$(iv)  is trivial, as $ \frac{1}{(1-z)^{\gamma}}\in H_{\vg}^\infty$.\\
	\noindent
	(iv) $\Rightarrow$ (v)
	We fix $\eps>0$. The function $f(z)=\frac{1}{(1-z)^{\gamma}},\ z\in \D,$  satisfies $f(z)=\sum_{n=0}^{\infty}a_n(\gamma)z^n,\ z\in \D,$ with $a_n(\gamma) >0,$ $a_n(\gamma)  \cong n^{\gamma-1}$ (eg.\cite[p.12]{GGM}).  By  the proof of Theorem \ref{Teorema cont} (ii) $\Rightarrow $ (iii),  there exists $c>0$ such that $ \sum_{k=1}^{n}\sum_{j=1}^{k}j^{\gamma-1}\geq c n^{\gamma +1}.$  Let $\eps'>0$ such that
	
	$$\frac{\eps'}{c(\gamma+\delta)^{\gamma+\delta}} \sup_n\left(\frac{n+\gamma+\delta}{n}\right)^{n+\gamma+\delta}<\eps.$$
	
\noindent If  $C_\mu\left(\frac{1}{(1-z)^{\gamma}}\right)\in H_{\vgd}^0,$   by Lemma \ref{PropNeces1}(ii),  there is  $n_0\in \N$ and $r_0\in [0,1)$ such that, for every $n\geq n_0$,
	$$\mu_n\left(\sup_{r_0\leq r<1}  \vgd(r)r^n\right)\left(\sum_{k=0}^{n}\sum_{j=0}^{k}j^{\gamma-1}\right)\leq \eps'.$$
	The supremum $\sup_{0\leq r<1}  \vgd(r)r^n=\|z^n\|_{\vgd}=\frac{(\gamma+\delta)^{\gamma+\delta}}{(n+\gamma+\delta)^{\gamma+\delta}}\frac{n^n}{(n+\gamma+\delta)^n}$ is attained in $s_n=\frac{n}{n+\gamma+\delta}.$ So, as $\lim_ns_n=1,$  there exists $n_1\in \N,$ $n_1\geq n_0,$ such that $s_n\geq r_0$ for every $n\geq n_1.$ Therefore, for  $n\geq n_1$ we get
	
	 $$\mu_n\frac{(\gamma+\delta)^{\gamma+\delta}}{(n+\gamma+\delta)^{\gamma+\delta}}\frac{n^n}{(n+\gamma+\delta)^n}  n^{\gamma +1} \leq \frac{\eps'}{c},$$
	which implies
	$$ n^{1-\delta}\mu_n \leq \frac{\eps'}{c(\gamma+\delta)^{\gamma+\delta}} \left(\frac{n+\gamma+\delta}{n}\right)^{n+\gamma+\delta}\leq \eps,$$
	as we wanted to see.\\
	\noindent
	(v)$\Rightarrow$(vi) Let us see that for every $s>0,$ if $\mu_n=o(\frac{1}{n^{s}}),$ then
	$\mu $ is a vanishing $s$-Carleson measure.  By hypothesis, given $\eps>0$  there exists $n_0\in \N,$ $n_0\geq 2$ such that $\mu_n\leq  e^{-1}\eps (n+1)^{-s}$ for every $n\geq n_0.$ Fix  $t\in [1-{\frac{1}{n_0}},1)$ and take $n\geq n_0$ such that  $1-\frac{1}{n}\leq t<1-\frac{1}{n+1}.$ We get $t^{n}\geq \left(1-\frac{1}{n}\right)^{n}\geq e^{-1},$     and thus,
	 $$\mu([t,1))\leq\mu[1-\frac{1}{n},1)\leq e\int_{1-\frac{1}{n}}^{1}t^{n}d\mu(t)\leq e  \mu_n\leq \eps (n+1)^{-s}\leq \eps(1-t)^{s},$$
	as we wanted to see.
	
	\noindent
	(vi)$\Rightarrow$(vii) Given $\eps>0$, take  $\eps'>0$ such that $\eps'(\gamma+1)/(\gamma+\delta)<\eps/3$ and choose $t_0\in [0,1)$ such that $\mu([t,1))\leq\eps'(1-t)^{1-\delta}$ for every $t\geq t_0.$ As
	
	\begin{eqnarray*}\int_0^1 \frac{1}{(1-tr)^{\gamma+1}}d\mu(t)
		\leq \frac{1}{(1-t_0)^{\gamma+1}}\mu[0,1)+ \int_{t_0}^{1}\frac{1}{(1-tr)^{\gamma+1}}d\mu(t),
	\end{eqnarray*}
	integrating by parts, we get

	\begin{eqnarray*}
		 \int_{t_0}^{1}\frac{d\mu(t)}{(1-tr)^{\gamma+1}}&=&\frac{\mu[t_0,1)}{(1-rt_0)^{\gamma+1}}+r(\gamma+1)\int_{t_0}^1\frac{\mu[t,1)}{(1-rt)^{\gamma+2}}dt\\
		&	\leq& \frac{\mu(t_0,1]}{(1-t_0)^{\gamma+1}}+\eps'r(\gamma+1)\int_{0}^{1}\frac{(1-t)^{1-\delta}}{(1-rt)^{\gamma+2}}dt\\
		&\leq& \frac{\mu(0,1]}{(1-t_0)^{\gamma+1}}+\eps'r(\gamma+1)\int_{0}^{1}\frac{dt}{(1-rt)^{\gamma +\delta+1}}\\
		&\leq &\frac{\mu(0,1]}{(1-t_0)^{\gamma+1}}+\eps'\frac{(\gamma+1)}{\gamma+\delta} \frac{1}{(1-r)^{\gamma+\delta}}\\
		&\leq& \frac{\mu(0,1]}{(1-t_0)^{\gamma+1}}+\frac{\eps}{3} \frac{1}{(1-r)^{\gamma+\delta}}.
	\end{eqnarray*}
	\noindent
	If we choose $0<r_0<1$ such that $(1-r)^{\gamma+\delta}\frac{\mu(0,1]}{(1-t_0)^{\gamma+1}}\leq \frac{\eps}{3}$ for each $r_0\leq r< 1$, (vii) is satisfied.
	
	\noindent
	(vii)$\Rightarrow$(i) follows by Proposition \ref{Caract_cont_gral}(ii).
\end{proof}

The existence of the probability measure satisfying the conditions in Corollary \ref{exemple_p} below is due to Example \ref{exp} in Appendix A.

\begin{corollary}\label{exemple_p}
Fix $\gamma>0$ and $0<p<\gamma+1$. Let $\mu_n=(n+1)^{-p}$ and consider  the probability measure $\mu$  which is the solution of the Hausdorff moment problem for $(\mu_n)_{n\in\N_0}$. Then $C_\mu: H_{v_{\gamma}}\to H_{v_{\gamma+1-p}}$ is continuous, not compact, and
$C_\mu: H_{v_{\gamma}}\to H_{v_{\gamma+1-t}}$ is not continuous for any $p<t<\gamma+1$.
\end{corollary}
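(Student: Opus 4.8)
The plan is to reduce the whole statement to the moment characterizations already proved in Theorems \ref{Teorema cont} and \ref{Teorema compact}, so that the only real input is the elementary asymptotics of the prescribed moment sequence. First I would record that, by Example \ref{exp} in Appendix A, the sequence $\mu_n=(n+1)^{-p}$ genuinely arises as the moments of a probability measure $\mu$ on $[0,1)$, so that all the operators in the statement make sense. The single computational fact I would isolate at the outset is the two-sided estimate $\mu_n=(n+1)^{-p}\cong n^{-p}$, together with its two sharp consequences: $n^{p}\mu_n=(n/(n+1))^{p}\to 1$, and $n^{t}\mu_n\cong n^{t-p}$ for any exponent $t$.

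For the first assertion I would put $\delta=1-p$. Since $0<p<\gamma+1$ one checks immediately that $\delta=1-p\in(-\gamma,1)$, so Theorem \ref{Teorema cont} applies with target space $H_{v_{\gamma+\delta}}=H_{v_{\gamma+1-p}}$: the equivalence (i)$\Leftrightarrow$(iii) there says continuity of $C_\mu:H_{v_\gamma}\to H_{v_{\gamma+1-p}}$ is equivalent to $\mu_n=O(n^{-(1-\delta)})=O(n^{-p})$, which holds because $\mu_n\cong n^{-p}$. (The equivalence of the $\infty$- and $0$-versions is built into Theorems \ref{Teorema cont} and \ref{Teorema compact}, so both are covered at once.) To rule out compactness I would invoke the equivalence (ii)$\Leftrightarrow$(v) of Theorem \ref{Teorema compact}, which states that compactness is equivalent to $\mu_n=o(n^{-p})$; but $n^{p}\mu_n\to 1\neq 0$, so $\mu_n$ is not $o(n^{-p})$ and the operator fails to be compact.

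For the last assertion I would fix $p<t<\gamma+1$ and set $\delta=1-t$. The inequalities $p<t<\gamma+1$ translate into $\delta=1-t\in(-\gamma,1-p)\subseteq(-\gamma,1)$, so Theorem \ref{Teorema cont} applies once more, now with target $H_{v_{\gamma+1-t}}$, and continuity would force $\mu_n=O(n^{-(1-\delta)})=O(n^{-t})$. Since $t>p$, however, $n^{t}\mu_n\cong n^{t-p}\to\infty$, so $\mu_n$ is not $O(n^{-t})$ and continuity fails. I expect no genuine obstacle here beyond the two routine verifications that the relevant $\delta$ lies in the admissible range $(-\gamma,1)$; the conceptual point worth flagging is that $\mu_n\cong n^{-p}$ sits exactly on the borderline $O(n^{-p})\setminus o(n^{-p})$, which is precisely what makes $C_\mu$ bounded but not compact at the critical exponent $\delta=1-p$ and discontinuous for every strictly smaller target exponent.
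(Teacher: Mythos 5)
Your proposal is correct and follows exactly the paper's route: the paper proves the corollary in one line by applying Theorems \ref{Teorema cont} and \ref{Teorema compact} with $\delta=1-p$ and $\delta=1-t$, which is precisely your reduction. The only difference is that you spell out the routine verifications (that $\delta$ lies in $(-\gamma,1)$, that $\mu_n\cong n^{-p}$ is $O(n^{-p})$ but not $o(n^{-p})$ and not $O(n^{-t})$ for $t>p$) which the paper leaves implicit.
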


\begin{proof}
	It follows immediately by applying Theorems \ref{Teorema cont} and \ref{Teorema compact}  for $\delta=1-p$ and $\delta=1-t$.
\end{proof}

We remark that this corollary, for $p=1$ shows the non compactness of the classical Ces\`aro operator $C$ in any $H_{v_\gamma}$, result which follows from the description of the spectrum given  in \cite{Persson2008}.

\section{The spectrum of $C_\mu$ on spaces of holomorphic functions}\label{sec:Spectrum}

We recall that given a continuous linear operator $T$ on a locally convex space $E,$ the \emph{spectrum} of $T$ is the set $\sigma(T):=\{\lambda\in \C:\ \lambda I-T \text{ is not invertible}\}$. When the space $E$ is Fr\'echet, Banach or regular (LB),   i.e. a countable inductive limit of Banach spaces in which each bounded set is contained and bounded in some step, the closed graph theorem  \cite[Theorem 24.31]{meisevogt} implies that $\sigma(T):=\{\lambda\in \C:\ \lambda I-T \text{ is not bijective}\}$. The \textit{point spectrum} of $T$ is defined as  $\sigma_p(T):=\{\lambda\in \C:\ \lambda I-T \text{ is not injective}\}$. 	 When $E$ is Banach and $T$ is compact, the Riesz theorem ensures that $\sigma_p(T)$ is countable, $\sigma(T)=\overline{\sigma_p(T)}$ and $\overline{\sigma_p(T)}\setminus \sigma_p(T)\subseteq \{0\}$. The set $\rho(T)=\C\setminus \sigma(T)$ is called the \textit{resolvent} of $T.$  If we want to distinguish the space, we write $\sigma(T, E),$  $\sigma_p(T, E)$ and $\rho(T, E).$

\subsection{The spectrum of $C_\mu: H(\D)\to H(\D)$}
\begin{proposition}\label{Espectre HD}
Let $\mu$ be a positive finite Borel measure on $[0,1)$ such that $\mu((0,1))>0$ and let $C_\mu: H(\D)\to H(\D)$   be well defined (and then continuous). The point spectrum satisfies  $\sigma_p(C_\mu, H(\D))=\{\mu_n:\ n\in\N_0\}$ and the eigenspace associated to $\mu_n$ is generated by the function
\begin{equation}\label{eigenfunctions}
f_n(z)=\sum_{k=n}^{\infty} a_kz^k, \  a_k=\frac{\mu_k \mu_{n}^{k-n-1}}{\prod_{j=n+1}^{k}(\mu_{n}-\mu_j)}, \ k\geq n+1, \ a_n=1.
\end{equation}
\end{proposition}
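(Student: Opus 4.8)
The plan is to translate the eigenvalue equation $C_\mu(f)=\lambda f$ into a recursion for the Taylor coefficients of $f$. Writing $f(z)=\sum_{k\ge 0}a_k z^k$ and introducing the partial sums $S_m:=\sum_{k=0}^m a_k$, the definition of $C_\mu$ shows that $C_\mu(f)=\lambda f$ is equivalent to $\mu_m S_m=\lambda a_m$ for every $m\in\N_0$; since $a_m=S_m-S_{m-1}$ with the convention $S_{-1}:=0$, this rewrites as the first order recursion $(\lambda-\mu_m)S_m=\lambda S_{m-1}$. Before exploiting it, I would record three elementary consequences of the hypothesis $\mu((0,1))>0$: all moments are strictly positive ($\mu_m=\int_0^1 t^m\,d\mu>0$), the sequence $(\mu_m)_m$ is strictly decreasing and hence the $\mu_m$ are pairwise distinct (because $\mu_m-\mu_{m+1}=\int_0^1 t^m(1-t)\,d\mu>0$), and $\mu_m\to 0$ by dominated convergence ($t^m\to 0$ on $[0,1)$, dominated by $1\in L^1(\mu)$).

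With these facts the inclusion $\sigma_p(C_\mu,H(\D))\subseteq\{\mu_m:m\in\N_0\}$ is immediate: if $\lambda\notin\{\mu_m\}$ then $\lambda-\mu_m\ne 0$ for all $m$, and the recursion together with $S_{-1}=0$ forces $S_m=0$ for every $m$ by induction, i.e. $f=0$. For the reverse inclusion I fix $\lambda=\mu_n$. For indices $m<n$ the factor $\mu_n-\mu_m$ is nonzero, so the same induction gives $S_0=\dots=S_{n-1}=0$ and hence $a_0=\dots=a_{n-1}=0$; at $m=n$ the recursion reads $0=0$, leaving $a_n=S_n$ free, which I normalise to $a_n=1$; for $m>n$ the factor $\mu_n-\mu_m$ is again nonzero, so the recursion determines $S_m=\prod_{j=n+1}^m \mu_n/(\mu_n-\mu_j)$ uniquely. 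Taking differences $a_k=S_k-S_{k-1}=S_{k-1}\,\mu_k/(\mu_n-\mu_k)$ yields exactly the closed form \eqref{eigenfunctions}. Note that this computation already shows the eigenspace is at most one dimensional, since every coefficient is determined by $a_n$; it will remain to check that $f_n$ is a genuine nonzero element of $H(\D)$.

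The main obstacle is precisely this last point: verifying that the formally constructed series has radius of convergence at least $1$, so that $f_n\in H(\D)$. I would establish $\limsup_k|a_k|^{1/k}\le 1$ by the root test. Rewriting $a_k=\mu_k\big/\big(\mu_n\prod_{j=n+1}^k(1-\mu_j/\mu_n)\big)$ exhibits $a_k$ as a positive number (each factor $1-\mu_j/\mu_n$ lies in $(0,1)$), and taking $k$-th roots splits the estimate into three factors. Since $0<\mu_k\le\mu_0$ one has $\mu_k^{1/k}\le\mu_0^{1/k}\to 1$; the factor $\mu_n^{-1/k}\to 1$ as $\mu_n>0$ is fixed; and for the product, because $\mu_j/\mu_n\to 0$ the terms $-\log(1-\mu_j/\mu_n)\to 0$, so by the Cesàro--Stolz theorem $\frac1k\sum_{j=n+1}^k-\log(1-\mu_j/\mu_n)\to 0$ and the remaining factor tends to $1$. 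Hence $\limsup_k|a_k|^{1/k}\le 1$, so $f_n\in H(\D)$, and $f_n\ne 0$ since $a_n=1$. Finally, because the coefficients were built from the identity $\mu_k S_k=\mu_n a_k$, the continuity of $C_\mu$ on $H(\D)$ gives $C_\mu(f_n)=\sum_k\mu_k S_k z^k=\mu_n f_n$, so $\mu_n\in\sigma_p(C_\mu,H(\D))$ with eigenspace $\mathrm{span}\{f_n\}$, completing both inclusions.
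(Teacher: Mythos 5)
Your proof is correct, and its skeleton matches the paper's: both translate $C_\mu(f)=\lambda f$ into a recursion on the Taylor coefficients, deduce $\sigma_p(C_\mu,H(\D))\subseteq\{\mu_n:n\in\N_0\}$ from the strict positivity and strict monotonicity of the moments, solve the recursion to reach exactly the closed form \eqref{eigenfunctions}, and observe that each eigenspace is one-dimensional. Your partial-sum formulation $(\lambda-\mu_m)S_m=\lambda S_{m-1}$ is a tidier, telescoping way to organize what the paper does by induction on the $a_n$ via the identity $\sum_{k=0}^{n}a_k=\frac{\mu_{n_0}}{\mu_n}a_n$. Where you genuinely diverge is the key analytic step, checking $f_n\in H(\D)$: the paper applies the ratio test, computing $\frac{a_k}{a_{k+1}}=\frac{\mu_k}{\mu_{k+1}}\cdot\frac{\mu_{n}-\mu_{k+1}}{\mu_{n}}$, and leans on its Lemma \ref{moments}(ii) --- the log-convexity $\mu_k^2\leq\mu_{k-1}\mu_{k+1}$ obtained from H\"older's inequality, which guarantees that $\lim_k\mu_k/\mu_{k+1}$ exists and is $\geq 1$. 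You instead use the root test: rewriting $a_k=\mu_k\big/\big(\mu_n\prod_{j=n+1}^{k}(1-\mu_j/\mu_n)\big)$ and showing $\frac1k\sum_{j=n+1}^{k}-\log(1-\mu_j/\mu_n)\to 0$ by Ces\`aro averaging, which needs only $0<\mu_j<\mu_n$ and $\mu_j\to 0$. Your route thus avoids the log-convexity lemma entirely and is more self-contained, while the paper's ratio-test route yields slightly more information (the actual limit of $a_k/a_{k+1}$, hence the exact radius of convergence in terms of $L=\lim_k\mu_k/\mu_{k+1}$) at no extra cost, since that lemma is used elsewhere in the paper. One cosmetic remark: in your final step the identity $C_\mu(f_n)=\mu_n f_n$ follows directly from the coefficientwise definition of $C_\mu$ once $f_n\in H(\D)$, so the appeal to continuity is unnecessary.
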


\begin{proof}
	If $\lambda\in \C$ is an eigenvalue, then there exists $f=\sum_{n=0}^{\infty}a_nz^n\in  H(\D),$ $f\neq 0,$ such that
$$\sum_{n=0}^{\infty}\mu_n(\sum_{k=0}^{n}a_k)z^n=C_{\mu}(f)=\lambda f=\sum_{n=0}^{\infty}\lambda a_nz^n.$$
Thus, for every $n\in \N$  we get   $\lambda a_n =\mu_n \sum_{k=0}^{n}a_k,$ or equivalently, $a_0(\lambda-\mu_0)=0,$   $a_n(\lambda - \mu_n)=\mu_n\sum_{k=0}^{n-1}a_k,$ $n\geq 1.$
If $a_n=0$ for every $n\leq n_0-1$ and $a_{n_0}\neq 0,$ then we get  $a_{n_0}(\lambda - \mu_{n_0})=0,$ and thus, $\lambda=\mu_{n_0}.$ This yields $\sigma_p(C_\mu, H(\D))\subseteq \{\mu_n:\ n\in\N_0\}.$ Moreover, as  $(\mu_n)_n$ is strictly decreasing by Lemma \ref{moments}(iii),
\begin{equation}\label{induction3}
	a_n=\frac{\mu_n}{\mu_{n_0} - \mu_n}\sum_{k=0}^{n-1}a_k,\ n\geq n_0+1,
\end{equation}
which implies $\sum_{k=0}^{n}a_k=a_n\frac{\mu_{n_0}-\mu_n}{\mu_n}+a_n=\frac{\mu_{n_0}}{\mu_n}a_n.$
Using this fact and (\ref{induction3}), we get by induction that if $f_{n_0}$ is an eigenfunction associated to $\mu_{n_0},$ it must be of the form
$$f_{n_0}=\sum_{n=n_0}^{\infty} a_nz^n, \text{ where }  a_n=\frac{\mu_n \mu_{n_0}^{n-n_0-1}}{\prod_{j=n_0+1}^{n}(\mu_{n_0}-\mu_j)}a_{n_0}, \ n\geq n_0+1, \ a_{n_0}\in \C.
$$
Now, observe that
$$\frac{a_n}{a_{n+1}}=\frac{\mu_{n}}{\mu_{n+1}}\frac{\mu_{n_0}-\mu_{n+1}}{\mu_{n_0}}>0,$$
hence, by Lemma \ref{moments} (i) and (ii)  the radius of convergence of $f_{n_0}$ is
$$R= \lim_n \frac{a_n}{a_{n+1}}=\lim_n  \frac{\mu_{n}}{\mu_{n+1}}\geq 1,$$
which implies $f_{n_0}\in H(\D)$ for every $a_{n_0}\in \C$. As this is satisfied  for every $n_0\in \N_0,$ we get $\sigma_p(C_\mu, H(\D))=\{\mu_n:\ n\in\N_0\}.$
\end{proof}

	In the case of the Cesàro operator $C: H(\D)\to H(\D),$ we get that the eigenspace associated to the eigenvalue $\mu_n=\frac{1}{n+1}$ is generated by the eigenfunction $\frac{z^{n}}{(1-z)^{n+1}},$ as obtained in  \cite{Persson2008}.

\begin{proposition}\label{C_mu sobre HD}
	Let $\mu$ be a positive finite Borel measure on $[0,1)$ such that $\mu((0,1))>0$ and consider $C_\mu: H(\D)\to H(\D)$  well defined (and then continuous). The following are equivalent:
	\begin{itemize}
		\item[(i)] $0\notin \sigma(C_{\mu}, H(\D)).$
		\item[(ii)] 	$C_\mu: H(\D)\to H(\D)$  is surjective.
		\item[(iii)] $M_{(1/\mu_n)_n}: H(\D)\to H(\D),$ $\sum_{n=0}^{\infty}b_nz^n\mapsto\sum_{n=0}^{\infty}\frac{b_n}{\mu_n}z^n$ is continuous.
		\item[(iv)] For every $r\in (0,1)$ there exist $s\in (r,1)$ and $C>0$ such that $\frac{1}{\mu_n}\leq C \left(\frac{s}{r}\right)^n$ for every $n\in \N.$
		\item[(v)] For every $\eps\in (0,1)$ there exists   $C>0$ such that $\frac{1}{\mu_n}\leq C(1+\eps)^n$ for every $n\in \N.$
	\end{itemize}
\end{proposition}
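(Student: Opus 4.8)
The plan is to reduce the whole chain of equivalences to a single statement about the diagonal multiplier $M_{(1/\mu_n)}$, by factoring $C_\mu$ through an isomorphism of $H(\D)$. I would begin with (i)$\Leftrightarrow$(ii). Since $H(\D)$ is a Fréchet space, the closed graph theorem \cite[Theorem 24.31]{meisevogt} gives that $0\notin\sigma(C_\mu,H(\D))$ iff $C_\mu$ is bijective. But $C_\mu$ is always injective here: by Proposition \ref{Espectre HD} we have $\sigma_p(C_\mu,H(\D))=\{\mu_n:n\in\N_0\}$, and $\mu((0,1))>0$ forces $\mu_n=\int_0^1 t^n\,d\mu(t)>0$ for every $n$, so $0$ is not an eigenvalue. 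Hence bijectivity is equivalent to surjectivity, which is (i)$\Leftrightarrow$(ii).

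The key observation is a factorization. Writing $S$ for multiplication by $1/(1-z)$ and $D$ for the diagonal operator $\sum_n c_n z^n\mapsto\sum_n\mu_n c_n z^n$, one has $C_\mu=D\circ S$, because $S(\sum_k a_k z^k)=\sum_n(\sum_{k=0}^n a_k)z^n$. As $1/(1-z)$ is zero-free and holomorphic on $\D$ with reciprocal $1-z\in H(\D)$, the operator $S$ is a topological isomorphism of $H(\D)$ with inverse multiplication by $1-z$; in particular this makes the injectivity of $C_\mu$ transparent as well. Therefore $C_\mu$ is surjective iff $D$ is surjective. Since $D$ is injective with formal inverse $M_{(1/\mu_n)}$, surjectivity of $D$ means precisely $M_{(1/\mu_n)}(H(\D))\subseteq H(\D)$, and by the closed graph theorem this map is then automatically continuous. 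This yields (ii)$\Leftrightarrow$(iii).

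To get (iii)$\Leftrightarrow$(iv) I would use that the topology of $H(\D)$ is generated by the seminorms $q_r(f)=\sum_n|a_n|r^n$, $0<r<1$, which are equivalent to the sup-norms on compact disks via Cauchy's estimates (for $r<s$, $q_r(f)\le(1-r/s)^{-1}\sup_{|z|=s}|f(z)|$). For a diagonal multiplier, testing on monomials $z^n$ shows that continuity forces, for each $r\in(0,1)$, a bound $q_r(M_{(1/\mu_n)}z^n)=\mu_n^{-1}r^n\le C\,s^n$ for some $s\in(r,1)$ and $C>0$; conversely such a bound gives $q_r(M_{(1/\mu_n)}f)\le C\,q_s(f)$ for all $f$, hence continuity together with $M_{(1/\mu_n)}(H(\D))\subseteq H(\D)$. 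Rewriting $\mu_n^{-1}r^n\le Cs^n$ as $\mu_n^{-1}\le C(s/r)^n$ is exactly (iv). The remaining equivalence (iv)$\Leftrightarrow$(v) is elementary: for (v)$\Rightarrow$(iv), given $r\in(0,1)$ choose $\eps\in(0,1)$ with $s:=r(1+\eps)<1$, so (v) yields $1/\mu_n\le C(1+\eps)^n=C(s/r)^n$ with $s\in(r,1)$; for (iv)$\Rightarrow$(v), given $\eps\in(0,1)$ pick $r\in(1/(1+\eps),1)$ and apply (iv) to obtain $s\in(r,1)$ and $C$ with $1/\mu_n\le C(s/r)^n$, and since $s/r<1/r<1+\eps$ this gives $1/\mu_n\le C(1+\eps)^n$.

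I expect the only real (and modest) obstacles to be two bookkeeping points: justifying cleanly that multiplication by $1/(1-z)$ is a topological isomorphism of $H(\D)$, so that surjectivity transfers from $C_\mu$ to the diagonal operator $D$; and stating the multiplier continuity criterion with the correct quantifier structure on $r$ and $s$ so that it matches (iv) verbatim. Once the factorization $C_\mu=D\circ S$ is in place, everything else is routine.
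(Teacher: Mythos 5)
Your proof is correct and takes essentially the same route as the paper: the paper likewise reduces surjectivity to the diagonal multiplier $M_{(1/\mu_n)_n}$ (solving the coefficient recursion and observing $g_f(z)=(1-z)h_f(z)$, which is exactly your factorization $C_\mu=D\circ S$ in disguise), then characterizes continuity of the multiplier by testing on monomials against weighted coefficient seminorms, and settles (iv)$\Leftrightarrow$(v) by the same elementary choices of $r$, $s$ and $\eps$. Your explicit factorization through multiplication by $1/(1-z)$ is merely a cleaner packaging of the identical argument.
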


\begin{proof}
	(i) is  equivalent to (ii) because $0\notin \sigma_p(C_\mu, H(\D))$ by Proposition \ref{Espectre HD}.\\
	(ii) $\Leftrightarrow$ (iii)  Given $f(z)=\sum_{n=0}^{\infty}b_nz^n\in H(\D),$ consider the problem of finding $g_f(z)=\sum_{n=0}^{\infty}a_nz^n\in H(\D)$ such that
	 $$\sum_{n=0}^{\infty}\mu_n(\sum_{k=0}^{n}a_k)z^n=C_{\mu}(g_f)(z)=f(z)=\sum_{n=0}^{\infty}b_nz^n. $$
This yields $\sum_{k=0}^{n}a_k=\frac{b_n}{\mu_n}$ for every $n\in \N_0.$ As we can assume without loss of generality that $\mu_0=1,$  we obtain
	\begin{equation*}
		 a_n=\sum_{k=0}^{n}a_k-\sum_{k=0}^{n-1}a_k=\frac{b_n}{\mu_n}-\frac{b_{n-1}}{\mu_{n-1}},\  n\in \N,\ a_0=b_0.
	\end{equation*}
	As a consequence, 	$C_\mu: H(\D)\to H(\D)$  is surjective if and only if
	\begin{equation}\label{antiimatgeHD}
		g_f(z)=b_0+\sum_{n=1}^{\infty}\left( \frac{b_n}{\mu_n}-\frac{b_{n-1}}{\mu_{n-1}} \right)z^n\in H(\D)
	\end{equation}
	for every $f(z)=\sum_{n=0}^{\infty}b_nz^n\in H(\D).$ And this condition is equivalent to  saying that the operator
	\begin{equation}\label{multipliers surj HD}
		M_{(1/\mu_n)_n}: H(\D)\to H(\D),\ f(z)=\sum_{n=0}^{\infty}b_nz^n\mapsto h_f(z)=\sum_{n=0}^{\infty}\frac{b_n}{\mu_n}z^n
	\end{equation}
	is continuous. Indeed,    $g_f(z)=(1-z)h_f(z)$ and then $h_f(z)=\frac{1}{1-z}g_f(z)$. Hence $g_f$ is well defined and holomorphic on $\D$ if and only if $h_f$ is so.\\
	(iii) $\Leftrightarrow$ (iv)   First, observe that $M_{(1/\mu_n)_n}: H(\D)\to H(\D)$ is continuous if and only if for every $r\in (0,1)$ there exist $s\in (r,1)$  and $C>0$ such that $\sup_n\frac{|b_n|}{\mu_n}r^n\leq C\sup_n|b_n|s^n$ for every  $\sum_{n=0}^{\infty}b_nz^n\in H(\D).$ And this condition is equivalent to (iv) (by evaluation on the monomials). \\
	(iv) $\Rightarrow$ (v)   follows easily if we take $r=\frac{1}{1+\eps}.$ For the converse, given $0<r<1,$ consider $0<\eps< \frac{1-r}{r},$ $\epsilon<1,$  and  $s\in [r(1+\eps),1).$ Now, the hypothesis yields     $C>0$ such that $\frac{1}{\mu_n}\leq C(1+\eps)^n\leq C\left(\frac{s}{r}\right)^n$ for every $n\in \N.$
\end{proof}

\begin{proposition}\label{C_mu-lambdaI sobre HD}
	Let $\mu$ be a positive finite Borel measure on $[0,1)$ such that $\mu((0,1))>0$ and consider $C_\mu: H(\D)\to H(\D).$
 \begin{itemize}
     \item [(a)]For $\lambda\in \{\mu_n: \ n\in\N_0 \},$ the operator $C_\mu-\lambda I: H(\D)\to H(\D)$  is not injective and not surjective. In fact, $z^k\notin(C_\mu-\mu_k I)(H(\D)),$ $k\in \N_0.$
     \item [(b)] For $\lambda\notin \{\mu_n: \ n\in\N_0 \}\cup \{0\},$ the following are equivalent:
	\begin{itemize}
		\item[(i)] $\lambda\notin \sigma(C_{\mu}, H(\D)).$
		\item[(ii)] 	$C_\mu-\lambda I: H(\D)\to H(\D)$  is  surjective.
		\item[(iii)]  For every $\sum_{k=0}^{\infty}b_kz^k\in H(\D),$
		\begin{equation}\label{eq sobre 1}
			 \sum_{k=0}^{\infty}\mu_k\left(\sum_{n=0}^{k-1}\frac{b_n}{\prod_{j=n}^{k}\left(1-\frac{\mu_j}{\lambda}\right)}\right)z^k\in H(\D)
		\end{equation}
	\end{itemize}
\end{itemize}
\end{proposition}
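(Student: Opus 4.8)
The plan is to reduce everything to the coefficient recursion induced by $C_\mu-\lambda I$ on Taylor coefficients. Writing $f=\sum_n b_nz^n$, $g=\sum_n a_nz^n$ and $S_n=\sum_{j=0}^n a_j$, the identity $C_\mu g=\sum_n\mu_n S_n z^n$ turns $(C_\mu-\lambda I)g=f$ into the scalar relations $\mu_n S_n-\lambda a_n=b_n$ for all $n\in\N_0$, i.e.\ (using $a_n=S_n-S_{n-1}$, $S_{-1}=0$) the first-order recursion $(\mu_n-\lambda)S_n=b_n-\lambda S_{n-1}$. All three items will be read off from this recursion.

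For part (a), non-injectivity is immediate from Proposition \ref{Espectre HD}: $\mu_k\in\sigma_p(C_\mu,H(\D))$, so $C_\mu-\mu_kI$ has nontrivial kernel. For non-surjectivity I would take $f=z^k$, so $b_n=0$ for $n\neq k$ and $b_k=1$, and $\lambda=\mu_k$. The coefficient of $z^k$ in $(C_\mu-\mu_kI)g$ is $\mu_kS_k-\mu_ka_k=\mu_kS_{k-1}$, so a preimage would force $\mu_kS_{k-1}=1$. But the equations for $n<k$ read $(\mu_n-\mu_k)S_n+\mu_kS_{n-1}=0$, and since the moments are strictly decreasing (Lemma \ref{moments}(iii)) we have $\mu_n\neq\mu_k$ for $n<k$; an induction from $(\mu_0-\mu_k)S_0=0$ then gives $S_n=0$ for all $n<k$, in particular $S_{k-1}=0$ (vacuously when $k=0$). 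This contradicts $\mu_kS_{k-1}=1$, whence $z^k\notin(C_\mu-\mu_kI)(H(\D))$ and the operator is not surjective.

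For part (b), the equivalence (i)$\Leftrightarrow$(ii) is structural: since $\lambda\notin\{\mu_n\}=\sigma_p(C_\mu,H(\D))$, the operator $C_\mu-\lambda I$ is injective, and on the Fréchet space $H(\D)$ one has $\lambda\notin\sigma(C_\mu,H(\D))$ iff $C_\mu-\lambda I$ is bijective (closed graph theorem, as recalled at the start of the section), hence iff it is surjective. The substance is (ii)$\Leftrightarrow$(iii). Since $\lambda\neq\mu_n$ for all $n$, the recursion has the unique formal solution with $S_n=\sum_{m=0}^n\frac{b_m}{\mu_m-\lambda}\prod_{i=m+1}^n\frac{-\lambda}{\mu_i-\lambda}$, determining $g_f=\sum_n a_nz^n$; surjectivity means precisely that $g_f\in H(\D)$ for every $f\in H(\D)$. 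The key is to match the series displayed in (iii), say $T_\lambda f$, with $g_f$ up to harmless terms. Using $\frac{-\lambda}{\mu_i-\lambda}=(1-\mu_i/\lambda)^{-1}$ and $\frac{1}{\mu_m-\lambda}=-\lambda^{-1}(1-\mu_m/\lambda)^{-1}$, I would rewrite
$$
C_\mu g_f=\sum_n\mu_nS_nz^n=-\frac{1}{\lambda}\sum_n\mu_n\left(\sum_{m=0}^n\frac{b_m}{\prod_{i=m}^n(1-\mu_i/\lambda)}\right)z^n,
$$
and split off the top term $m=n$, which contributes $-\frac{\mu_n b_n}{\lambda-\mu_n}$ to the $n$-th coefficient and leaves exactly $T_\lambda f$ in the remaining inner sum (now running only to $k-1$). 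This yields $T_\lambda f=-\lambda\left(C_\mu g_f+\sum_n\frac{\mu_n b_n}{\lambda-\mu_n}z^n\right)$. Because $\mu_n\to 0$ (dominated convergence) and $\lambda\neq 0$, the sequence $(\mu_n/(\lambda-\mu_n))_n$ is bounded, so the last series is a bounded coefficient multiplier of $f$ and always lies in $H(\D)$; moreover $C_\mu g_f=f+\lambda g_f\in H(\D)$ iff $g_f\in H(\D)$. Hence $T_\lambda f\in H(\D)$ for all $f$ iff $g_f\in H(\D)$ for all $f$, i.e.\ (iii)$\Leftrightarrow$(ii).

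I expect the main obstacle to be the bookkeeping in the last step: unfolding the recursion correctly, converting the products into the $(1-\mu_j/\lambda)$ form, and handling the off-by-one in the inner summation (the series in (iii) stops at $k-1$ while $C_\mu g_f$ carries the diagonal $m=n$ term) so that the discrepancy is genuinely a bounded multiplier of $f$ rather than a term that could itself leave $H(\D)$. The boundedness of $(\mu_n/(\lambda-\mu_n))_n$ is where the hypotheses are used: it relies on $0$ being the only accumulation point of $\{\mu_n\}$ together with $\lambda\notin\{\mu_n\}\cup\{0\}$.
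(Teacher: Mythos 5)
Your proposal is correct and takes essentially the same route as the paper's proof: both reduce to the coefficient relations $\mu_n S_n-\lambda a_n=b_n$, solve the resulting first-order recursion in closed form with the products $\prod_j\left(1-\frac{\mu_j}{\lambda}\right)$, discard an always-convergent remainder using $\inf_k|\mu_k-\lambda|>0$ (a consequence of $\mu_k\to 0$ and $\lambda\notin\{\mu_n\}\cup\{0\}$), get (i)$\Leftrightarrow$(ii) from $\lambda\notin\sigma_p$ plus the closed graph theorem, and prove (a) by the same induction forcing $S_n=0$ for $n<k$ against $\mu_kS_{k-1}=1$ (which you spell out, correctly via Lemma \ref{moments}(iii), where the paper is terse). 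The only cosmetic difference is bookkeeping: you solve for the partial sums $S_n$ and compare with the series in (iii) through $C_\mu g_f=f+\lambda g_f$, while the paper unfolds the recursion directly for the coefficients, obtaining $a_k=\frac{b_k}{\mu_k-\lambda}-\frac{\mu_k}{\lambda^2}\sum_{n=0}^{k-1}\frac{b_n}{\prod_{j=n}^{k}\left(1-\frac{\mu_j}{\lambda}\right)}$ and splitting off the first term.
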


\begin{proof}
(a) First, take $k\in \N_0$ and  consider $C_\mu-\mu_k I: H(\D)\to H(\D).$ By Proposition \ref{Espectre HD} we know that $C_\mu-\mu_kI$  is not injective. Let us see that it is not surjective either by showing that  $z^k$ does not belong to the image of $ C_\mu-\mu_k I.$ Indeed, if $f(z)=\sum_{n=0}^{\infty}a_nz^n\in H(\D)$ is such that  $(C_\mu-\mu_kI)(f)(z)=z^k,$ then
\begin{eqnarray*}
z^k=\sum_{n=0}^{\infty}\left(\mu_n(\sum_{j=0}^{n}a_j)-\mu_ka_n\right)z^n.
\end{eqnarray*}

Since $(\mu_n)_{n\in\N_0}$ is a sequence of strictly positive numbers, there is no $(a_n)_{n\in\N_0}\subseteq \C$ such that  $\mu_n(\sum_{j=0}^{n}a_j)-\mu_ka_n=0$ for $n\neq k$ and $\mu_k(\sum_{j=0}^{k}a_j)-\mu_ka_k=1$. Hence $C_\mu-\mu_k I$ is not surjective.

(b)  We take $\lambda\notin \{\mu_n: \ n\in\N_0 \}\cup \{0\},$
$f(z)=\sum_{k=0}^{\infty}b_kz^k\in H(\D),$ and consider the problem of finding $g_f(z)=\sum_{k=0}^{\infty}a_kz^k$ such that
	 $$\sum_{k=0}^{\infty}\mu_k(\sum_{n=0}^{k}a_n)z^k-\lambda\sum_{k=0}^{\infty}a_kz^k =(C_{\mu}-\lambda I)(g_f)(z)=f(z)=\sum_{k=0}^{\infty}b_kz^k. $$
	This implies $\mu_k\sum_{n=0}^{k}a_n-\lambda a_k=b_k$ for every $k\in \N_0,$ that is, $a_k(\mu_k-\lambda)=b_k-\mu_k\sum_{n=0}^{k-1}a_n.$ Assuming without loss of generality that $\mu_0=1$ we get
	 $$a_k=\frac{b_k}{\mu_k-\lambda}-\frac{\mu_k}{\mu_k-\lambda}\sum_{n=0}^{k-1}a_n ,\ k\in \N, \  a_0=\frac{b_0}{\mu_0-\lambda}.$$
 	Inductively, for every $k\in \N,$ we get
	\begin{eqnarray}
		 a_k&=&\frac{b_k}{\mu_k-\lambda}+\mu_k\sum_{n=0}^{k-1}(-1)^{k-n}\frac{\lambda^{k-n-1}b_n}{\prod_{j=n}^{k}(\mu_j-\lambda)}\nonumber\\ &=&\frac{b_k}{\mu_k-\lambda}-\frac{\mu_k}{\lambda^2}\sum_{n=0}^{k-1}\frac{b_n}{\prod_{j=n}^{k}(1-\frac{\mu_j}{\lambda})}.
	\end{eqnarray}
	Now, observe that $\sum_{k=0}^{\infty}\frac{b_k}{\mu_k-\lambda}z^k\in H(\D)$ since $|\mu_k-\lambda|>\delta$ for every $k\in \N$  for some $\delta>0$.  Therefore, $g_f(z)\in H(\D)$ if and only if $\sum_{k=0}^{\infty}\mu_k\left(\sum_{n=0}^{k-1}\frac{b_n}{\prod_{j=n}^{k}\left(1-\frac{\mu_j}{\lambda}\right)}\right)z^k\in H(\D)$.
This gives the equivalence (ii)$\Leftrightarrow$(iii). Observe that (i)$\Leftrightarrow$(ii) is trivial, since $\lambda\notin \sigma_p(C_\mu, H(\D)).$
\end{proof}

\begin{remark}
 Condition (\ref{eq sobre 1}) is satisfied, for instance, if $\sum_{n=0}^{\infty}\mu_n<\infty.$ Indeed, in this case the infinite product converges, and thus the radius of convergence of the power series is  that of $\sum_{k=0}^{\infty}\mu_k(\sum_{n=0}^{k}|b_n|)z^k$, which belongs to $H(\D)$ because $f(z)=\sum_{k=0}^{\infty}|b_k|z^k\in H(\D)$ and $C_{\mu}$ is  well defined, and then  continuous. In the next proposition we prove that this assumption can be relaxed.
\end{remark}

\begin{lemma}\label{Carleson_estim_inf_prod}
Let $\mu$ be a positive finite Borel measure on $[0,1).$  The following assertions are satisfied:
\begin{itemize}
    \item [(i)]  Fix $n\in \N.$ If  $C>0$  satisfies that there exists $j_0\in \N, j_0\geq  n$ such that  $\mu_j\leq \frac{C}{j}$ for every $j> j_0,$ then for  every $k\in \N,$ $k\geq n$    and $\lambda\in \C,  $ $\lambda  \notin \{\mu_j: \ j\geq n \}\cup \{0\},$ we get
\begin{eqnarray}
		\prod_{j=n}^{k}\left|1-\frac{\mu_j}{\lambda}\right|\gtrsim \frac{1}{k^{\lceil C|Re(1/\lambda)|\rceil}},
	\end{eqnarray}
where  $\lceil x\rceil$ denotes the least integer greater than or equal to  $x>0$. Here, the constant giving the lower estimate can depend on $C$ and on $n.$ \item [(ii)] Fix $n\in \N.$  If $D>0$ satisfies that there exists  $j_0\in\N$  $j_0\geq n,$ satisfying $\mu_j\geq \frac{D}{j}$  for every $j> j_0,$    then for  every $k\in \N,$ $k\geq n$  and    $a>\mu_n,$ we get
$$\prod_{j=n}^{k}\left(1-\frac{\mu_j}{a}\right)\lesssim \left(\frac{1}{k}\right)^{\lfloor D/a\rfloor},$$
where $\lfloor x\rfloor$ denotes the biggest integer smaller than or equal to  $x>0$. Here, the constant giving the upper estimate can depend on $D$ and on $n.$
 \end{itemize}
\end{lemma}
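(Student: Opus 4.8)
The plan is to take logarithms and reduce both estimates to the growth of the harmonic sum $\sum_{j=n}^{k}\frac1j\cong \log k$. Writing $P_k:=\prod_{j=n}^{k}\abs{1-\mu_j/\lambda}$ in (i) and $Q_k:=\prod_{j=n}^{k}(1-\mu_j/a)$ in (ii), I would bound $\log P_k=\sum_{j=n}^{k}\log\abs{1-\mu_j/\lambda}$ from below and $\log Q_k=\sum_{j=n}^{k}\log(1-\mu_j/a)$ from above, exploiting that by Lemma \ref{moments} the sequence $(\mu_j)_j$ decreases to $0$, so that for all large $j$ the relevant arguments are small (resp.\ close to $1$).

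For (i), since $\lambda\notin\{\mu_j:j\ge n\}\cup\{0\}$ every factor is strictly positive, so the finitely many terms with $n\le j\le j_1$, where $j_1\ge \max(j_0,n)$ is chosen so that $\mu_j\le \abs{\lambda}/2$ for $j>j_1$, contribute only a positive multiplicative constant depending on $n$, $j_1$ and $\lambda$. For $j>j_1$ I would use $\log(1-w)=-\sum_{m\ge 1}w^m/m$, whose real part yields $\log\abs{1-w}\ge -Re(w)-\abs{w}^2$ whenever $\abs{w}\le 1/2$; with $w=\mu_j/\lambda$ this gives
$$\log P_k\ge -Re(1/\lambda)\sum_{j>j_1}^{k}\mu_j-\frac{1}{\abs{\lambda}^2}\sum_{j>j_1}^{k}\mu_j^2+\mathrm{const}.$$
The second sum converges because $\mu_j^2\le C^2/j^2$, so it is harmless. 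For the first sum I distinguish the sign of $Re(1/\lambda)$: if $Re(1/\lambda)<0$ that term is nonnegative and $\log P_k$ stays bounded below by a constant, so the claim holds trivially; if $Re(1/\lambda)\ge 0$, then $\sum_{j>j_1}^{k}\mu_j\le C\sum_{j>j_1}^{k}1/j\le C\log k+\mathrm{const}$ gives $\log P_k\ge -C\,Re(1/\lambda)\log k-\mathrm{const}$, i.e.\ $P_k\gtrsim k^{-C\abs{Re(1/\lambda)}}$. Since $k\ge 1$ and $\lceil x\rceil\ge x$, we have $k^{-C\abs{Re(1/\lambda)}}\ge k^{-\lceil C\abs{Re(1/\lambda)}\rceil}$, which is the asserted lower bound.

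Part (ii) is more direct. As $(\mu_j)_j$ is decreasing and $a>\mu_n$, each factor satisfies $0<1-\mu_j/a<1$, so $Q_k\in(0,1)$. Using $\log(1-x)\le -x$ for $x\in[0,1)$ and splitting off the finitely many nonpositive terms $n\le j\le j_0$, I obtain
$$\log Q_k\le -\frac1a\sum_{j>j_0}^{k}\mu_j+\mathrm{const}\le -\frac Da\sum_{j>j_0}^{k}\frac1j+\mathrm{const}\le -\frac Da\log k+\mathrm{const},$$
hence $Q_k\lesssim k^{-D/a}$. Since $D/a\ge \lfloor D/a\rfloor$ and $k\ge 1$, this yields $Q_k\lesssim k^{-D/a}\le k^{-\lfloor D/a\rfloor}=(1/k)^{\lfloor D/a\rfloor}$, as claimed.

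The only genuinely delicate point is the lower bound in (i): one must keep the quadratic remainder $\sum\mu_j^2$ under control (which is exactly where the summability provided by $\mu_j=O(1/j)$ enters) and track the sign of $Re(1/\lambda)$, since only when $Re(1/\lambda)\ge 0$ does the harmonic sum actually degrade the estimate and produce the exponent $C\abs{Re(1/\lambda)}$ that is then rounded up to its ceiling. Everything else reduces to the standard comparison of $\sum 1/j$ with $\log k$ and to isolating finitely many bounded factors.
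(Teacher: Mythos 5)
Your proof is correct, and it takes a genuinely different route from the paper's. The paper argues multiplicatively, factor by factor: for (i) it passes to real parts, $\left|1-\frac{\mu_j}{\lambda}\right|\geq \left|Re\left(1-\frac{\mu_j}{\lambda}\right)\right|\geq 1-\frac{C}{j}\left|Re(1/\lambda)\right|\geq \frac{j-\lceil C|Re(1/\lambda)|\rceil}{j}$ for $j$ beyond an index $j_0\geq \max\{n,\lceil C|Re(1/\lambda)|\rceil\}$, and then telescopes the product into the factorial ratio $\frac{(k-\lceil C|Re(1/\lambda)|\rceil)!}{k!}\geq k^{-\lceil C|Re(1/\lambda)|\rceil}$; part (ii) is symmetric, using $1-\frac{\mu_j}{a}\leq 1-\frac{\lfloor D/a\rfloor}{j}$ and the same telescoping. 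The integer rounding thus happens \emph{before} the product is taken, precisely so that it telescopes. You instead work additively via logarithms: the estimate $\log|1-w|\geq -Re(w)-|w|^2$ for $|w|\leq 1/2$, the summability of $\mu_j^2\lesssim j^{-2}$ to absorb the quadratic remainder, the comparison $\sum_{j\leq k}1/j\cong \log k$, and rounding only at the end. Your route yields the sharper intermediate exponents $k^{-C|Re(1/\lambda)|}$ and $k^{-D/a}$, of which the stated integer bounds are weakenings, and it makes transparent exactly where $\mu_j=O(1/j)$ enters (linearly to produce the logarithm, quadratically to control the remainder); the paper's argument buys elementarity (no power series or case split on the sign of $Re(1/\lambda)$, which its absolute value on the real part handles uniformly) and produces the integer exponents directly. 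Two harmless remarks: your constant also depends on $\lambda$ (through $j_1$ and the initial factors), but so does the paper's, whose $j_0$ depends on $\lceil C|Re(1/\lambda)|\rceil$, so the statement's \lqq depends on $C$ and $n$\rqq\ must be read that way in either proof; and for the finitely many $k$ with $n\leq k\leq j_1$ the bound holds trivially since each partial product is strictly positive, a step both you and the paper leave implicit.
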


\begin{proof}
(i) Fix $n\in \N$  and consider  $\lambda,$  $C$ and $j_0\in \N,$ $j_0\geq  \max\{n, \lceil C|Re(1/\lambda)|\rceil\},$ as in the hypothesis.  Then, for each $k\geq  j_0+1,$ we get
	\begin{eqnarray}
		\prod_{j=n}^{k}\left|1-\frac{\mu_j}{\lambda}\right|&\gtrsim&
	\prod_{j=j_0+1}^{k}\left|1-\frac{\mu_j}{\lambda}\right|\geq
	\prod_{j=j_0+1}^{k}\left|Re\left(1-\frac{\mu_j}{\lambda}\right)\right| \nonumber\\  &\geq&\prod_{j=j_0+1}^{k}\left(1-\frac{C}{j}\left|Re\left(\frac{1}{\lambda}\right)\right|\right)\geq
		\prod_{j=j_0+1}^{k}\frac{\left(j- \lceil C|Re(1/\lambda)|\rceil)\right)}{j}\nonumber\\
		&\geq  & \frac{(k-\lceil C\left|Re\left(1/\lambda\right)\right|\rceil)!}{k!}  \geq\frac{1}{k^{\lceil C\left|Re\left(1/\lambda\right)\right|\rceil}}.\nonumber
	\end{eqnarray}
 (ii) Fix $n\in \N$   and consider $a$, $j_0$ and    $D$ as in the hypothesis (consider the minimun  $j_0\geq n$ satisfying the condition). For $k\geq j_0+1$, we get

 	\begin{eqnarray*}
 	\prod_{j=n}^{k}\left(1-\frac{\mu_j}{a}\right)  &\lesssim& \prod_{j=j_0+1}^{k}\left(1-\frac{\lfloor D/a\rfloor}{j}\right)=\prod_{j=j_0+1}^{k}\frac{j-\lfloor D/a\rfloor}{j}=\frac{j_0!}{(j_0-\lfloor D/a\rfloor)!}
 	\frac{(k-\lfloor D/a\rfloor)!}{k!} \\
 	&\leq&\frac{{j_0}^{\lfloor D/a\rfloor}}{(k-\lfloor D/a|\rfloor)^{\lfloor D/a\rfloor}}\lesssim \left(\frac{1}{k}\right)^{\lfloor D/a\rfloor},
 \end{eqnarray*}
where the constant giving the upper estimate can depend on $D$ and $n.$
\end{proof}

\begin{remark} \label{remarkl}
In Lemma \ref{Carleson_estim_inf_prod}(i), if we assume that $\mu_j\leq \frac{C}{j}$ for all $j\in\N$, we get  $\prod_{j=n}^{k}\left|1-\frac{\mu_j}{\lambda}\right|\geq M \frac{1}{k^{\lceil C|Re(1/\lambda)|\rceil}}$, where $M>0$ does not depend  on $k,$ nor on $n$.

\end{remark}
\begin{proposition}\label{Espectre C_mu HD}
	Let $\mu$ be a positive finite Borel measure on $[0,1)$ such that $\mu((0,1))>0$ and consider $C_\mu: H(\D)\to H(\D).$ If $\mu$ is a Carleson measure, then  $\sigma(C_{\mu}, H(\D))\setminus\{0\}=\{\mu_n: \ n\in\N_0 \}.$
\end{proposition}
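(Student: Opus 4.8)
The plan is to combine the spectral information already extracted for $C_\mu$ with the Carleson estimate of Lemma \ref{Carleson_estim_inf_prod}. Proposition \ref{C_mu-lambdaI sobre HD}(a) already shows that each $\mu_n$ lies in $\sigma(C_\mu,H(\D))$, and since $\mu_n=\int_0^1 t^n\,d\mu(t)>0$ for all $n$ (because $\mu((0,1))>0$), none of these eigenvalues is $0$; hence $\{\mu_n:n\in\N_0\}\subseteq\sigma(C_\mu,H(\D))\setminus\{0\}$. So everything reduces to the reverse inclusion: for every $\lambda\notin\{\mu_n:n\in\N_0\}\cup\{0\}$ I must show $\lambda\notin\sigma(C_\mu,H(\D))$. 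By Proposition \ref{C_mu-lambdaI sobre HD}(b) this is equivalent to verifying condition (\ref{eq sobre 1}), i.e.\ that for every $f(z)=\sum_{k=0}^{\infty}b_kz^k\in H(\D)$ the power series with coefficients
$$
c_k:=\mu_k\sum_{n=0}^{k-1}\frac{b_n}{\prod_{j=n}^{k}\left(1-\frac{\mu_j}{\lambda}\right)}
$$
again defines a function in $H(\D)$; equivalently, that this series has radius of convergence at least $1$.

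The key step is a coefficient estimate exploiting that $\mu$ is Carleson. By \cite[Lemma 2]{GGM} there is $C>0$ with $\mu_k\le C/k$ for all $k\in\N$, so Remark \ref{remarkl} applies and yields a constant $M>0$, \emph{independent of $n$ and $k$}, such that $\prod_{j=n}^{k}\bigl|1-\frac{\mu_j}{\lambda}\bigr|\ge M\,k^{-p}$ with $p:=\lceil C|Re(1/\lambda)|\rceil$ (the extra factor at $j=0$, when present, is the positive constant $|1-\mu_0/\lambda|$ and may be absorbed into $M$). Plugging this in together with $\mu_k\le C/k$ gives
$$
|c_k|\le \frac{\mu_k k^{p}}{M}\sum_{n=0}^{k-1}|b_n|\le \frac{C\,k^{p-1}}{M}\sum_{n=0}^{k-1}|b_n|.
$$

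The hard part, which is nonetheless routine once the above bound is in hand, is to control the partial sums $\sum_{n=0}^{k-1}|b_n|$ and check that the polynomial factor $k^{p-1}$ is beaten by geometric decay. Fix $0<\rho<1$ and choose $s$ with $\rho<s<1$; since $f\in H(\D)$, the quantity $B:=\sum_{n=0}^{\infty}|b_n|s^n$ is finite, and because $s^{-n}\le s^{-(k-1)}$ for $0\le n\le k-1$ we get $\sum_{n=0}^{k-1}|b_n|\le s^{-(k-1)}B$. Hence $|c_k|\rho^k\le \frac{CBs}{M}\,k^{p-1}(\rho/s)^k$, and as $\rho/s<1$ the right-hand side is summable in $k$. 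Thus $\sum_k|c_k|\rho^k<\infty$ for every $\rho<1$, so the series in (\ref{eq sobre 1}) has radius of convergence at least $1$ and lies in $H(\D)$. By Proposition \ref{C_mu-lambdaI sobre HD}(b), $\lambda\notin\sigma(C_\mu,H(\D))$, which gives the reverse inclusion and completes the proof. The only genuine subtlety is keeping the lower bound for the products uniform in the starting index $n$, which is precisely what Remark \ref{remarkl} provides under the Carleson hypothesis.
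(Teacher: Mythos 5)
Your proof is correct and follows essentially the same route as the paper: both reduce the problem to condition (\ref{eq sobre 1}) via Proposition \ref{C_mu-lambdaI sobre HD}(b) and then apply the uniform lower bound for the products $\prod_{j=n}^{k}\left|1-\frac{\mu_j}{\lambda}\right|$ from Lemma \ref{Carleson_estim_inf_prod}(i) and Remark \ref{remarkl}. The only (harmless) difference is the last step: where the paper deduces the radius of convergence by applying $C_\mu$ to $\tilde{f}(z)=\sum_k |b_k|z^k$ and using the radius-of-convergence formula, you verify convergence directly with a geometric estimate on the partial sums $\sum_{n=0}^{k-1}|b_n|$, and you even handle explicitly the $j=0$ factor that the paper's citation of Remark \ref{remarkl} (stated for $n\in\N$) quietly absorbs.
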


\begin{proof}
	By Proposition \ref{Espectre HD} we get $ \{\mu_n: \ n\in\N_0 \}\subseteq \sigma(C_{\mu}).$ Let us see $\sigma(C_{\mu})\setminus\{0\}\subseteq\{\mu_n: \ n\in\N_0 \}.$  If $\mu$ is a Carleson measure, we can find $C>0$ such that $\mu_j\leq \frac{C}{j}$ for every $j\in \N.$ Let $\lambda\notin \{\mu_n: \ n\in\N_0 \}\cup \{0\}.$
 By Lemma \ref{Carleson_estim_inf_prod}(i) and Remark \ref{remarkl}, we get $M>0$ such that, for $n\in\N$ and $k\geq n,$
 \begin{eqnarray}
		\prod_{j=n}^{k}\left|1-\frac{\mu_j}{\lambda}\right|\geq \frac{M}{k^{\lceil C|Re(1/\lambda)|\rceil}}.
	\end{eqnarray}
	Therefore, for every  $f(z)=\sum_{k=0}^{\infty}b_kz^k\in H(\D)$  and every $k\in \N$
	 $$\mu_k\left|\sum_{n=0}^{k-1}\frac{b_n}{\prod_{j=n}^{k}\left(1-\frac{\mu_j}{\lambda}\right)}\right|\leq \frac{1}{M} k^{\lceil C|Re(1/\lambda)|\rceil}\mu_k\sum_{n=0}^{k-1}|b_n|.$$
	Since   $\tilde{f}(z)=\sum_{k=0}^{\infty}|b_k|z^k\in H(\D)$, and $C_{\mu}(\tilde{f})=\sum_{k=0}^{\infty}\mu_k\left(\sum_{n=0}^{k}|b_n|\right)z^k\in H(\D),$  the radius of convergence formula yields
	\begin{eqnarray*}
		 \sum_{k=0}^{\infty}\mu_k\left(\sum_{n=0}^{k-1}\frac{b_n}{\prod_{j=n}^{k}\left(1-\frac{\mu_j}{\lambda}\right)}\right)z^k\in H(\D).
	\end{eqnarray*}
	We conclude by Proposition \ref{C_mu-lambdaI sobre HD}.
	\end{proof}

\begin{example}
	\begin{itemize}
		\item[(i)]	 If $\mu$ is a positive finite Borel measure on $[0,1)$ such that $\mu_n\cong \frac{1}{n^s},$ $s\geq 1,$ then $\sigma(C_{\mu}, H(\D))=\{\mu_n: \ n\in\N_0 \}.$  In particular, this is satisfied for the Cesàro operator $C.$
		
		  \item [(ii)]  The positive  measure $\mu$ sastisfying $\mu_n=\frac{1}{2^n},$ $n\in \N,$ is  $s$-Carleson for every $s>0,$ satisfies $\sum_n\mu_n<\infty$ and $\sigma(C_{\mu},H(\D))=\{\mu_n: \ n\in\N_0 \}\cup\{0\}.$

\item[(iii)] If $d\mu(t)=\exp(-\frac{\alpha}{(1-t)^{\beta}})dt,$ $\alpha, \beta>0,$ then $\sigma(C_{\mu}, H(\D))=\{\mu_n: \ n\in\N_0 \}.$ Indeed, $\mu_n  \sim n^{-\frac{\beta+2}{2(\beta+1)}}\exp\left(-Bn^{\frac{\beta}{\beta+1}}\right),$ with $B=\alpha^{\frac{1}{\beta+1}}\left(\beta^{\frac{1}{\beta+1}}+\beta^{-\frac{1}{\beta+1}}\right)$, by \cite[Lemma 4.28]{Arroussi} and \cite[Lemma 1]{Dostanic}.
		 	\end{itemize}
\end{example}


\subsection{The spectrum of $C_\mu: H_{v_\gamma}^{\infty}\to H_{v_\gamma}^{\infty}$}
\begin{proposition}\label{mu_n_dins_EspectreHv}
Let $X\subseteq H(\D)$ be a Banach space of analytic functions and $\mu$  a positive finite Borel measure on $[0,1)$ such that $\mu((0,1))>0$. If   $C_\mu$ is bounded on $X$,  then the following hold:
\begin{itemize}
	\item[(i)] $\sigma_p(C_{\mu},X)\subseteq \{\mu_n: \ n\in\N_0 \}.$
	\item[(ii)]  If $X$ contains the polynomials, then  $\{\mu_n: \ n\in\N_0 \}\cup \{0\}\subseteq \sigma (C_{\mu},X).$
	\item[(iii)] If $X$ contains the polynomials and $C_{\mu}$ is compact on $X,$ then $\sigma_p(C_{\mu},X)=\{\mu_n: \ n\in\N_0 \}$ and $\sigma(C_{\mu},X)=\{\mu_n: \ n\in\N_0 \}\cup \{0\}.$
\end{itemize}
\end{proposition}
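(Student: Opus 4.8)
The plan is to deduce all three assertions from the already-established spectral picture of $C_\mu$ on $H(\D)$ (Propositions \ref{Espectre HD} and \ref{C_mu-lambdaI sobre HD}), using only that $X$ embeds continuously in $H(\D)$ and that $C_\mu$ acts on $X$ as the restriction of $C_\mu:H(\D)\to H(\D)$, since the operator is given by the same series formula on both spaces.

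For (i), I would observe that any eigenfunction $f\in X$, $f\neq 0$, of $C_\mu$ on $X$ with eigenvalue $\lambda$ is, via the injective embedding $X\hookrightarrow H(\D)$, a nonzero eigenfunction of $C_\mu$ on $H(\D)$ for the same $\lambda$. Hence $\lambda\in\sigma_p(C_\mu,H(\D))=\{\mu_n:n\in\N_0\}$ by Proposition \ref{Espectre HD}, which is exactly the desired inclusion.

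For (ii), the point is to show each $\mu_k$ lies in $\sigma(C_\mu,X)$ and then pass to the limit. I would first record that $\mu_k>0$ for every $k$ (because $\mu((0,1))>0$ forces $\int_0^1 t^k\,d\mu(t)>0$) and that $\mu_k\to 0$ as $k\to\infty$ by dominated convergence. Now $z^k\in X$ since $X$ contains the polynomials, whereas Proposition \ref{C_mu-lambdaI sobre HD}(a) gives $z^k\notin(C_\mu-\mu_k I)(H(\D))$. If $C_\mu-\mu_k I$ were surjective on $X$, a preimage of $z^k$ lying in $X\subseteq H(\D)$ would contradict this; hence $C_\mu-\mu_k I$ is not surjective on $X$, so $\mu_k I-C_\mu$ is not bijective and $\mu_k\in\sigma(C_\mu,X)$. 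Since the spectrum of a bounded operator on a Banach space is closed and $\mu_k\to 0$, we obtain $0\in\overline{\{\mu_k:k\in\N_0\}}\subseteq\sigma(C_\mu,X)$, giving $\{\mu_n:n\in\N_0\}\cup\{0\}\subseteq\sigma(C_\mu,X)$.

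For (iii), I would invoke the Riesz theory for the compact operator $C_\mu$ on the Banach space $X$ recalled at the start of Section \ref{sec:Spectrum}: every nonzero spectral value is an eigenvalue and $\sigma(C_\mu,X)=\overline{\sigma_p(C_\mu,X)}$. Combining $\{\mu_n:n\in\N_0\}\subseteq\sigma(C_\mu,X)$ from (ii) with $\mu_n\neq 0$ shows each $\mu_n\in\sigma_p(C_\mu,X)$, and together with (i) this yields $\sigma_p(C_\mu,X)=\{\mu_n:n\in\N_0\}$. Then $\sigma(C_\mu,X)=\overline{\sigma_p(C_\mu,X)}=\overline{\{\mu_n:n\in\N_0\}}=\{\mu_n:n\in\N_0\}\cup\{0\}$, the last equality because $\mu_n\to 0$ with all $\mu_n>0$, so $0$ is the unique accumulation point of $\{\mu_n:n\in\N_0\}$. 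The argument is essentially bookkeeping on top of the $H(\D)$ results; the only step requiring genuine care is the transfer of non-surjectivity in (ii), where one must use that surjectivity of $C_\mu-\mu_k I$ on $X$ would produce an $H(\D)$-preimage of $z^k$, contradicting Proposition \ref{C_mu-lambdaI sobre HD}(a).
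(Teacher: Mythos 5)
Your proposal is correct and follows essentially the same route as the paper, whose proof consists precisely of deducing (i) from Proposition \ref{Espectre HD}, (ii) from Proposition \ref{C_mu-lambdaI sobre HD}(a), and (iii) from (i), (ii) and the Riesz theory of compact operators recalled at the start of Section \ref{sec:Spectrum}. You merely make explicit the bookkeeping the paper leaves implicit, namely transferring the non-surjectivity of $C_\mu-\mu_k I$ from $H(\D)$ to $X$ via $z^k\in X$, and using the closedness of the spectrum together with $\mu_n\to 0$ to place $0$ in $\sigma(C_\mu,X)$.
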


\begin{proof}
(i) follows by Proposition \ref{Espectre HD}.
(ii) is a consequence of Proposition \ref{C_mu-lambdaI sobre HD}(a)  (see \cite{Dahlner} and \cite{Persson2008} for the result on the classical Cesàro operator). By the compactness of $C_{\mu},$   (iii) is a direct consequence of (i) and (ii).
\end{proof}

Theorem  \ref{sum_mu_n_implica_compact} and Proposition \ref{mu_n_dins_EspectreHv}  yield the following result for $H^{\infty}(\D)$ and for general weighted Banach spaces of holomorphic functions.

\begin{proposition}\label{Espectre Hinfty}
Let $\mu$ be a positive finite Borel measure on $[0,1)$ such that $\mu((0,1))>0$  and $\sum_{n=0}^{\infty}\mu_n<\infty.$ The operators $C_\mu: H^{\infty}(\D)\to H^{\infty}(\D),$ $C_\mu: H^{\infty}_v\to H^{\infty}_v$ and $C_\mu: H^{0}_v\to H^{0}_v$  satisfy $\sigma_p(C_\mu)=\{\mu_n: \ n\in\N_0\}$  and $\sigma(C_\mu)=\{\mu_n: \ n\in\N_0\}\cup \{0\}.$
\end{proposition}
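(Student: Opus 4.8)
The statement to prove is Proposition~\ref{Espectre Hinfty}, which asserts that under the hypotheses $\mu((0,1))>0$ and $\sum_{n=0}^\infty \mu_n < \infty$, the three operators $C_\mu$ on $H^\infty(\D)$, on $H_v^\infty$, and on $H_v^0$ all have point spectrum $\{\mu_n : n \in \N_0\}$ and full spectrum $\{\mu_n : n \in \N_0\} \cup \{0\}$. The plan is to combine the compactness result of Theorem~\ref{sum_mu_n_implica_compact} with the abstract spectral result of Proposition~\ref{mu_n_dins_EspectreHv}(iii). The hypothesis $\sum_{n=0}^\infty \mu_n < \infty$ is precisely the assumption under which Theorem~\ref{sum_mu_n_implica_compact} guarantees that $C_\mu$ is compact on each of the three Banach spaces $H^\infty(\D)$, $H_v^\infty$, and $H_v^0$.

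First I would verify that each of the three spaces falls within the scope of Proposition~\ref{mu_n_dins_EspectreHv}. That proposition requires a Banach space $X \subseteq H(\D)$ of analytic functions on which $C_\mu$ is bounded and which contains the polynomials. Each of $H^\infty(\D)$, $H_v^\infty$, and $H_v^0$ is a Banach space continuously embedded in $H(\D)$, and each contains the polynomials: for $H^\infty(\D)$ and $H_v^\infty$ this is immediate, and for $H_v^0$ one uses that the polynomials are dense in $H_v^0$ (in particular contained in it), as noted in the introduction where $H_v^0$ is identified with the closure of the polynomials in $H_v^\infty$. Boundedness of $C_\mu$ on each space is exactly the content of Theorem~\ref{sum_mu_n_implica_compact}, which moreover upgrades boundedness to compactness.

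With these two ingredients in hand, the conclusion is immediate: by Theorem~\ref{sum_mu_n_implica_compact}, $C_\mu$ is compact on each of $H^\infty(\D)$, $H_v^\infty$, and $H_v^0$, and since each space contains the polynomials, Proposition~\ref{mu_n_dins_EspectreHv}(iii) applies verbatim to give $\sigma_p(C_\mu) = \{\mu_n : n \in \N_0\}$ and $\sigma(C_\mu) = \{\mu_n : n \in \N_0\} \cup \{0\}$ in each case.

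I do not anticipate any genuine obstacle here, since the real work has already been done in the two cited results. The only point requiring a moment of care is the membership of the polynomials in $H_v^0$ (so that part (iii) of Proposition~\ref{mu_n_dins_EspectreHv} is applicable), together with checking that the hypothesis $\mu((0,1))>0$ needed in Proposition~\ref{mu_n_dins_EspectreHv} is carried over verbatim from the statement. The proof is therefore essentially a citation of the two prior results.
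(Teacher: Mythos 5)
Your proposal is correct and is exactly the paper's argument: the paper deduces Proposition~\ref{Espectre Hinfty} directly from the compactness given by Theorem~\ref{sum_mu_n_implica_compact} together with Proposition~\ref{mu_n_dins_EspectreHv}(iii), just as you do. Your added checks (polynomials belong to each of the three spaces, and the hypothesis $\mu((0,1))>0$ carries over) are the right ones and raise no issues.
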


Analogously, by Theorem \ref{Teorema compact} and Proposition \ref{mu_n_dins_EspectreHv} we get  the following corollary:

\begin{proposition}\label{EspectrevanishingHv}
	Let $\mu$ be a positive finite Borel measure on $[0,1)$ such that $\mu((0,1))>0$ and $\mu$ is vanishing Carleson. Then:
	\begin{itemize}
		\item[(i)] $\sigma_p(C_\mu, H_{\vg})= \sigma_p(C_\mu, H_{\vg}^{0})=\{\mu_n:\ n\in\N_0\},$
		\item[(ii)]$\sigma(C_\mu, H_{\vg})= \sigma(C_\mu, H_{\vg}^{0})=\{\mu_n:\ n\in\N_0\}\cup\{0\}$.
	\end{itemize}
	In particular, this is satisfied if $\mu$ is an  $s$-Carleson measure, for $s>1$.
\end{proposition}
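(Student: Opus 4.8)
The plan is to apply Theorem \ref{Teorema compact} together with Proposition \ref{mu_n_dins_EspectreHv}, exactly as the statement preceding the proposition suggests. First I would set $\gamma > 0$ and $\delta = 0$, so that the two spaces coincide: $H_{\vg}^{\infty} = H_{v_{\gamma+\delta}}^{\infty}$ and $H_{\vg}^0 = H_{v_{\gamma+\delta}}^0$. Under the hypothesis that $\mu$ is a vanishing Carleson measure (i.e.\ a vanishing $1$-Carleson measure, which is the case $\delta = 0$ of condition (vi) in Theorem \ref{Teorema compact}), that theorem gives the equivalence with its condition (ii), namely that $C_\mu : H_{\vg}^{\infty} \to H_{\vg}^{\infty}$ is a compact operator, and likewise with condition (i), that $C_\mu : H_{\vg}^0 \to H_{\vg}^0$ is compact. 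So the first step establishes that $C_\mu$ is a compact operator on both $H_{\vg}^{\infty}$ and $H_{\vg}^0$.

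Next I would invoke Proposition \ref{mu_n_dins_EspectreHv}(iii). Both $H_{\vg}^{\infty}$ and $H_{\vg}^0$ are Banach spaces of analytic functions contained in $H(\D)$ that contain the polynomials (the polynomials lie in $H_{\vg}^0$ since any monomial $z^n$ satisfies $\vg(r) r^n \to 0$ as $r \to 1^-$). Having just shown $C_\mu$ is compact on each of these spaces, the hypotheses of part (iii) are met, and it yields directly $\sigma_p(C_\mu, X) = \{\mu_n : n \in \N_0\}$ and $\sigma(C_\mu, X) = \{\mu_n : n \in \N_0\} \cup \{0\}$ for $X = H_{\vg}^{\infty}$ and for $X = H_{\vg}^0$. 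This is essentially the whole proof: the two quoted results chain together immediately.

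For the final assertion, that the conclusion holds in particular when $\mu$ is an $s$-Carleson measure for $s > 1$, I would observe that such a measure is automatically a vanishing Carleson measure: if $\mu([t,1)) = O((1-t)^s)$ with $s > 1$, then $\mu([t,1))/(1-t) = O((1-t)^{s-1}) \to 0$ as $t \to 1^-$, so $\mu([t,1)) = o(1-t)$, which is precisely the vanishing ($1$-)Carleson condition. Hence the main hypothesis of the proposition is satisfied and the conclusion follows.

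Since both ingredients are already available, there is no substantial obstacle; the only point requiring a word of care is the verification that $\delta = 0$ is an admissible choice, which it is because the range $\delta \in (-\gamma, 1)$ in Theorem \ref{Teorema compact} contains $0$. Thus the proof reduces to citing Theorem \ref{Teorema compact} (with $\delta = 0$) and Proposition \ref{mu_n_dins_EspectreHv}(iii), together with the elementary observation that $s$-Carleson for $s > 1$ implies vanishing Carleson.
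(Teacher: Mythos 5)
Your proposal is correct and follows exactly the paper's route: the paper derives this proposition as an immediate consequence of Theorem \ref{Teorema compact} (applied with $\delta=0$, so that vanishing $(1-\delta)$-Carleson becomes vanishing Carleson and the domain and target spaces coincide) together with Proposition \ref{mu_n_dins_EspectreHv}(iii). Your verification that $\delta=0$ lies in the admissible range $(-\gamma,1)$, that the polynomials belong to $H_{\vg}^0$, and that an $s$-Carleson measure with $s>1$ is vanishing Carleson supplies precisely the routine details the paper leaves implicit.
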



For non necessarily vanishing Carleson measures, we get inclusions for the point spectrum in the spirit of that obtained in \cite{Persson2008} and \cite{AlemanPersson2010} for the classical Ces\`aro operator.

\begin{proposition}\label{Espectre puntual Hv}
Let $\mu$ be a positive finite Borel measure on $[0,1)$ such that $\mu((0,1))>0$  and let $C>0$ such that $\mu_n\leq \frac{C}{n}$ eventually. For $\gamma>0$ we have:

\begin{itemize}
\item[(i)] $\{\mu_n: \lceil\frac{C}{\mu_n}\rceil\leq \gamma \}=\{\mu_n:  \frac{1}{(1-z)^{\lceil\frac{C}{\mu_n}\rceil}}\in  H_{\vg}^\infty\}\subseteq \sigma_p(C_\mu, H_{\vg}^\infty)\subseteq \{\mu_n: n\in \N_0\}$

    \item [(ii)] $\{\mu_n: \lceil\frac{C}{\mu_n}\rceil< \gamma \}=\{\mu_n:  \frac{1}{(1-z)^{\lceil\frac{C}{\mu_n}\rceil}}\in  H_{\vg}^0\}\subseteq \sigma_p(C_\mu, H_{\vg}^0)\subseteq \{\mu_n: n\in \N_0\}$

\end{itemize}
Moreover, if there exists  $D>0$ (consider the maximum) such that $\mu_n\geq \frac{D}{n}$ eventually, then  we even get

\begin{itemize}
    \item [(iii)] $\sigma_p(C_\mu, H_{\vg}^\infty)\subseteq\{\mu_n: \lfloor\frac{D}{\mu_n}\rfloor\leq \gamma \}=\{\mu_n:  \frac{1}{(1-z)^{\lfloor\frac{D}{\mu_n}\rfloor}}\in  H_{\vg}^\infty\}$

    \item [(iv)] $\sigma_p(C_\mu, H_{\vg}^0)\subseteq\{\mu_n: \lfloor\frac{D}{\mu_n}\rfloor< \gamma \}=\{\mu_n:  \frac{1}{(1-z)^{\lfloor\frac{D}{\mu_n}\rfloor}}\in  H_{\vg}^0\}$

\end{itemize}
Here, $\lceil x\rceil$ $(\text{resp.}\lfloor x\rfloor)$ denotes the least (greatest) integer greater (smaller) than or equal to  $x.$
\end{proposition}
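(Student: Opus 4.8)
The plan is to analyze the eigenfunction $f_n$ from Proposition \ref{Espectre HD} directly, computing its radius of convergence and its precise growth rate near the boundary, and then deciding membership in $H_{\vg}^\infty$ or $H_{\vg}^0$ by matching that growth against the standard weight. By Proposition \ref{mu_n_dins_EspectreHv}(i) we already have $\sigma_p(C_\mu, H_{\vg}^\infty)\subseteq\{\mu_n:n\in\N_0\}$ and likewise for $H_{\vg}^0$, so the only issue is to decide, for each fixed $n$, whether the (one-dimensional) eigenspace generated by $f_n$ actually sits inside the weighted space. Thus everything reduces to estimating the tail coefficients $a_k=\frac{\mu_k\,\mu_n^{\,k-n-1}}{\prod_{j=n+1}^{k}(\mu_n-\mu_j)}$ as $k\to\infty$.

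First I would rewrite $\prod_{j=n+1}^{k}(\mu_n-\mu_j)=\mu_n^{\,k-n}\prod_{j=n+1}^{k}\bigl(1-\tfrac{\mu_j}{\mu_n}\bigr)$, so that $a_k\cong \tfrac{\mu_k}{\mu_n}\cdot\prod_{j=n+1}^{k}\bigl(1-\tfrac{\mu_j}{\mu_n}\bigr)^{-1}$. The hypothesis $\mu_j\le C/j$ eventually lets me invoke Lemma \ref{Carleson_estim_inf_prod}(i) (with $\lambda=\mu_n$, whose reciprocal is real and positive) to bound the product below by $\gtrsim k^{-\lceil C/\mu_n\rceil}$, and, when $\mu_j\ge D/j$ is also available, Lemma \ref{Carleson_estim_inf_prod}(ii) bounds a companion product above by $\lesssim k^{-\lfloor D/\mu_n\rfloor}$. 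Reciprocating, the growth of $a_k$ is sandwiched between constant multiples of $\mu_k\,k^{\lfloor D/\mu_n\rfloor}$ from below and $\mu_k\,k^{\lceil C/\mu_n\rceil}$ from above. The key comparison is then the classical fact (used already in Theorem \ref{Teorema cont}, via $a_k(\gamma)\cong k^{\gamma-1}$) that $\frac{1}{(1-z)^m}$ has coefficients of order $k^{m-1}$ and lies in $H_{\vg}^\infty$ precisely when $m\le\gamma$ and in $H_{\vg}^0$ precisely when $m<\gamma$; this explains the displayed reformulations $\{\mu_n:\frac{1}{(1-z)^{\lceil C/\mu_n\rceil}}\in H_{\vg}^\infty\}$ etc., since $\lceil C/\mu_n\rceil\le\gamma$ is equivalent to membership of that rational function.

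To turn the coefficient estimates into membership statements I would compare $f_n$ with these explicit rational functions: if $\lceil C/\mu_n\rceil\le\gamma$, then $a_k\lesssim \mu_k\,k^{\lceil C/\mu_n\rceil}\lesssim k^{\lceil C/\mu_n\rceil-1}$ (absorbing $\mu_k\lesssim 1/k$), so $f_n$ grows no faster than $\tfrac{1}{(1-z)^{\lceil C/\mu_n\rceil}}\in H_{\vg}^\infty$, giving the lower inclusion in (i); the strict inequality $\lceil C/\mu_n\rceil<\gamma$ analogously forces $o\bigl((1-|z|)^{-\gamma}\bigr)$ decay and hence membership in $H_{\vg}^0$, giving (ii). For the upper inclusions (iii)--(iv), the lower bound $a_k\gtrsim \mu_k\,k^{\lfloor D/\mu_n\rfloor}\gtrsim k^{\lfloor D/\mu_n\rfloor-1}$ (absorbing $\mu_k\gtrsim 1/k$) shows $f_n$ grows at least like $\tfrac{1}{(1-z)^{\lfloor D/\mu_n\rfloor}}$, so $f_n\in H_{\vg}^\infty$ forces $\lfloor D/\mu_n\rfloor\le\gamma$, and the vanishing condition for $H_{\vg}^0$ upgrades this to the strict inequality. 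Since all coefficients $a_k$ are real and positive (a consequence of $(\mu_n)$ being strictly decreasing, as in Proposition \ref{Espectre HD}), the growth is genuinely realized along the positive real axis $z=r\to 1^-$, so these lower bounds on coefficients do translate into genuine lower bounds on $\|f_n\|_{\vg}$-type growth.

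The main obstacle is making the passage from coefficient asymptotics to weighted-sup growth fully rigorous in both directions. Upper bounds on coefficients give upper bounds on $\max_{|z|=r}|f_n(z)|$ routinely, but the reverse—deducing that a lower bound on the $a_k$ forces the function to escape $H_{\vg}^\infty$ or $H_{\vg}^0$—requires care, since cancellation could in principle suppress boundary growth; here positivity of the coefficients is exactly what rescues the argument, because evaluation at real $z=r$ gives $f_n(r)=\sum a_k r^k$ with every term positive, so a termwise lower bound of Tauberian type (comparing $\sum k^{m-1}r^k$ with $(1-r)^{-m}$) yields the needed lower growth estimate. A secondary technical point is handling the finitely many initial indices where the hypotheses $\mu_j\le C/j$ and $\mu_j\ge D/j$ may fail: since these affect only a fixed finite prefactor in the products (the constants in Lemma \ref{Carleson_estim_inf_prod} are allowed to depend on $n$), they do not alter the polynomial order in $k$ and can be absorbed into the implied constants.
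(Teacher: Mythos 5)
Your proposal is correct and follows essentially the same route as the paper's proof: reduce to the eigenfunctions $f_n$ of Proposition \ref{Espectre HD}, sandwich the coefficients $a_k$ between $k^{\lfloor D/\mu_n\rfloor-1}$ and $k^{\lceil C/\mu_n\rceil-1}$ via Lemma \ref{Carleson_estim_inf_prod}, and compare with $\frac{1}{(1-z)^m}$, whose coefficients are $\cong k^{m-1}$. Your explicit discussion of positivity of the $a_k$ and the termwise comparison $\sum_k k^{m-1}r^k \cong (1-r)^{-m}$ is exactly the (implicit) mechanism behind the paper's two-sided estimates of $\sup_{|z|=r}|f_n(z)|\vg(z)$, so no genuinely new ingredient is involved.
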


\begin{proof}
As   $H_{\vg}^0(\D)\subseteq H_{\vg}^{\infty}(\D)\subseteq H(\D)$, Proposition \ref{mu_n_dins_EspectreHv}(i) yields $\sigma_p(C_{\mu},H_{\vg}^0)\subseteq\sigma_p(C_{\mu},H_{\vg}^\infty)\subseteq\{\mu_n: \ n\in\N_0\},$  and if $\mu_n$ is an eigenvalue,   by Proposition \ref{Espectre HD} the corresponding  eigenspace is generated by the function $f_n(z)=\sum_{k=n}^{\infty}a_kz^k,$ with $a_n=1$ and
$$  a_k=\frac{\mu_k \mu_{n}^{-1}}{\prod_{j=n+1}^{k}(1-\frac{\mu_j}{\mu_n})}, \ k\geq n+1.$$
By Lemma \ref{Carleson_estim_inf_prod}(i)  we get
$\prod_{j=n+1}^{k}\left(1-\frac{\mu_j}{\mu_n}\right)\gtrsim \frac{1}{k^{\lceil\frac{C}{\mu_n}\rceil}}.$ Thus, as $a_k \lesssim k^{\lceil\frac{C}{\mu_n}\rceil-1}$ for every $k\in \N_0,$   and $g_\alpha(z)=\frac{1}{(1-z)^{\alpha}},\ z\in \D,$  satisfies $g_\alpha(z)=\sum_{k=0}^{\infty}c_k(\alpha)z^k,\ z\in \D,$ with  $c_k(\alpha)  \cong k^{\alpha-1}$, we have
$$\sup_{|z|=r}|f_n(z)|\vg(z)\lesssim \sum_{k=n}^{\infty}k^{\lceil\frac{C}{\mu_n}\rceil-1}r^k\vg(r)\lesssim \sup_{|z|=r}\left|\frac{1}{(1-z)^{\lceil\frac{C}{\mu_n}\rceil}}\right|\vg(z).$$
This yields (i) and (ii).  Analogously,  for $n$ fixed and $D$ as in the hypothesis, Lemma \ref{Carleson_estim_inf_prod}(ii) yields
$\prod_{j=n+1}^{k}\left(1-\frac{\mu_j}{\mu_n}\right)\lesssim \left(\frac{1}{k}\right)^{\lfloor D/\mu_n\rfloor}.$
Now, as  $c_k(\lfloor\frac{D}{\mu_n}\rfloor) \lesssim k^{\lfloor\frac{D}{\mu_n}\rfloor-1}\lesssim a_k$ for every $k\in \N_0,$ we have
$$\sup_{|z|=r}|f_n(z)|\vg(z)\gtrsim    \sum_{k=n}^{\infty}k^{\lfloor\frac{D}{\mu_n}\rfloor-1}r^k\vg(r)\gtrsim \sup_{|z|=r}\left|\frac{1}{(1-z)^{\lfloor\frac{D}{\mu_n}\rfloor}}\right|\vg(z)$$
which concludes  (iii) and (iv).
\end{proof}

As an immediate consequence of Proposition \ref{Espectre puntual Hv}, we get the following:

\begin{corollary}\label{fins_n dins espectre_punt_Hv}
	Let $\mu$ be a Carleson measure. For every  $n_0\in \N_0$  there exists $\gamma>0$ such that $ \{\mu_n:  \  n\leq n_0, \ n\in \N_0\}\subseteq\sigma_p(C_\mu, H_{\vg}^{0})\subseteq\sigma_p(C_\mu, H_{\vg}^{\infty})$.
\end{corollary}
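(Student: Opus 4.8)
The plan is to read off both inclusions directly from Proposition \ref{Espectre puntual Hv}, since the statement is asserted to be an immediate consequence. First I would record what the Carleson hypothesis buys us: by \cite[Lemma 2]{GGM} the measure $\mu$ being Carleson is equivalent to $\mu_n=O(1/n)$, so there is a constant $C>0$ with $\mu_n\leq C/n$ for all sufficiently large $n$. This is precisely the standing hypothesis under which Proposition \ref{Espectre puntual Hv} operates, so parts (i) and (ii) of that proposition are available for every $\gamma>0$ once this $C$ is fixed.

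The right-hand inclusion $\sigma_p(C_\mu, H_{\vg}^{0})\subseteq\sigma_p(C_\mu, H_{\vg}^{\infty})$ requires no new work: since $H_{\vg}^{0}\subseteq H_{\vg}^{\infty}$ and $C_\mu$ acts on both spaces as the restriction of the single operator $C_\mu:H(\D)\to H(\D)$, any nonzero eigenfunction $f\in H_{\vg}^{0}$ of $C_\mu$ for an eigenvalue $\lambda$ also lies in $H_{\vg}^{\infty}$ and satisfies $C_\mu f=\lambda f$ there; hence $\lambda\in\sigma_p(C_\mu, H_{\vg}^{\infty})$.

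For the left-hand inclusion I would exploit that $n_0$ is fixed, so only finitely many indices are involved. Proposition \ref{Espectre puntual Hv}(ii) tells us that, for every $\gamma>0$,
\[
\Big\{\mu_n:\ \Big\lceil\tfrac{C}{\mu_n}\Big\rceil<\gamma\Big\}\subseteq \sigma_p(C_\mu, H_{\vg}^{0}).
\]
The finitely many integers $\lceil C/\mu_n\rceil$, $0\leq n\leq n_0$, admit a maximum, so I would simply choose
\[
\gamma>\max_{0\leq n\leq n_0}\Big\lceil\tfrac{C}{\mu_n}\Big\rceil,
\]
which is a legitimate strictly positive weight parameter because each $\mu_n>0$. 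With this choice every index $n\leq n_0$ satisfies $\lceil C/\mu_n\rceil<\gamma$, whence $\{\mu_n:\ n\leq n_0\}$ is contained in the left-hand set above, and therefore in $\sigma_p(C_\mu, H_{\vg}^{0})$, yielding the full chain of inclusions.

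I do not anticipate any genuine obstacle here: the content is entirely carried by Proposition \ref{Espectre puntual Hv}, and the corollary merely packages it by fitting the finite family of ceiling-exponents below a single $\gamma$. The one small point worth flagging is that $C$ must be the \emph{same} Carleson constant throughout, so one should fix $C$ from the Carleson condition first and only afterwards select $\gamma$ as a function of $C$ and $n_0$.
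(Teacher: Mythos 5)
Your proposal is correct and follows essentially the same route as the paper, which states the corollary as an immediate consequence of Proposition \ref{Espectre puntual Hv}: fix the Carleson constant $C$ with $\mu_n\leq C/n$, choose $\gamma>\max_{0\leq n\leq n_0}\lceil C/\mu_n\rceil$ so that part (ii) of the proposition places each $\mu_n$, $n\leq n_0$, in $\sigma_p(C_\mu,H_{\vg}^0)$, and use $H_{\vg}^0\subseteq H_{\vg}^\infty$ for the second inclusion. The only point worth noting is that, as in the proposition, one tacitly assumes $\mu((0,1))>0$ so that the $\mu_n$ are strictly positive and the proposition applies.
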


In the next example  we show that multiples of the Cesàro operator are not the only  cases satisfying the two conditions in Proposition \ref{Espectre puntual Hv}.

%

\begin{example}
	Let $\mu$ be a Borel probability on $[0,1)$ satisfying  $\mu_n=\frac{n+2}{2(n+1)^2},$ $n\in \N_0$ (see Example \ref{Ex:nomultipleCesaro} in Appendix A). Then $\mu_n\leq \frac{1/2}{n}$ for every $n\in \N$ and $\mu_n\geq \frac{t/2}{n}$ eventually for every $t<1.$ Moreover,  $\lim_{t\to 1}\left\lfloor \frac{t(n+1)^2}{n+2}\right\rfloor=\left\lfloor \frac{(n+1)^2}{n+2}\right\rfloor$ for every $n\in \N_0.$ Fix $\gamma>0$ and let $n_0=\max\{n\in\N: \ \left\lceil \frac{(n+1)^2}{n+2}\right\rceil\leq \gamma\}$. Then Proposition  \ref{Espectre puntual Hv} gives $\{\mu_n:\ n\leq n_0\}\subseteq \sigma_p(C_\mu,H_{\vg}^\infty)\subseteq \{\mu_n:\ n\leq n_0+1\}$.
\end{example}

The example above shows that when $\mu$ is a Borel probability on $[0,1)$ with $\lim n\mu_n=a>0$, then the point spectrum of $C_\mu$ on $H_{v_\gamma}^{\infty}$ is ``similar''  to that of the classical Ces\`aro operator  (see  \cite[Theorem 4.1]{Persson2008} and \cite[Theorems 4.1 and 5.1, Corollaries 2.1 and 5.1]{AlemanPersson2010}).

\begin{remark}
Let $\mu$ be a positive finite Borel measure on $[0,1)$ such that $\mu((0,1))>0.$ If $\mu$ is vanishing Carleson, as $C_{\mu}$ is compact   we know by Proposition \ref{EspectrevanishingHv} that the point spectrum of $C_{\mu}$ on the weighted Banach spaces of holomorphic functions for standard weights consists of all the moments of the measure. Moreover, we have  the following:
\begin{itemize}
	\item[(a)] For $\gamma\geq 1,$ Proposition  \ref{Espectre puntual Hv} gives a direct proof of Proposition \ref{EspectrevanishingHv}. Indeed, for each $n\in\N_0$, let $\eps>0$ such that $\lceil \eps/\mu_n\rceil =1$. Since $\mu$ is vanishing Carleson, we can apply Proposition \ref{Espectre puntual Hv}(i) for $C=\eps$ and we get $\{\mu_n:\ n\in\N_0\}=\sigma_p(C_\mu, H_{v_\gamma})$.
	Now, the compactness given by Theorem \ref{Teorema compact} yields the result on the spectrum.
	\item[(b)] For $\gamma<1,$  the set $\{\mu_n: \lceil\frac{C}{\mu_n}\rceil\leq \gamma \}$ is empty for each $C>0$, but in any case $\sigma_p(C_\mu,H_{\vg})=\{\mu_n:\ n\in\N_0\}$.
	\end{itemize}
\end{remark}

\section{The operator $C_{\mu}$ on Korenblum Fr\'echet and (LB) spaces}\label{F-LB_Korenblum}

In this section we study the Cesàro-type operator $C_\mu$  when it acts  on Korenblum Fr\'echet and (LB) spaces.  We refer to \cite[Chapter 25]{meisevogt} for all the definitions related to the locally convex spaces considered in this section.

For $0<\alpha<\beta,$ then $H_{v_{\alpha}}^{\infty}\subseteq H_{v_{\beta}}^{\infty}$  with continuous inclusions. So, given  $\gamma\geq  0$  we can consider the projective limit
	$$A_+^{-\gamma}:=\cap_{\mu>\gamma}H_{v_{\mu}}^{\infty}=\{f\in H(\D): \ \sup_{z\in \D}(1-|z|)^{\gamma+\frac{1}{k}}|f(z)|<\infty \text{ for every } k\in \N \},$$
	which is a Fréchet Schwartz space under the norms $\|\ \|_{v_{\gamma}+\frac{1}{k}},$ $k\in \N.$
	Analogously, given $\gamma> 0,$ and $ k\geq k_0$ with $ \gamma-\frac{1}{k_0}>0,$ we can consider the inductive limit
	$$A_{-}^{-\gamma}:=\cup_{\mu<\gamma}H_{v_{\mu}}=\{f\in H(\D): \ \sup_{z\in \D}(1-|z|)^{\gamma-\frac{1}{k}}|f(z)|<\infty \text{ for some } k\in \N \},$$
endowed with the finest locally convex topology such that all inclusions $H_{v_{\gamma-\frac{1}{k}}}^{\infty}\subseteq A_{-}^{-\gamma}$ are continuous. With this topology, $A_{-}^{-\gamma}$ is a complete regular (DFS)-space. If $\gamma=\infty,$ then $A_{-}^{-\gamma}$ is the Korenblum space  \cite{Korenblum}  denoted simply by
	$$A^{-\infty}:=\ind_{n\in \N}H_{v_{n}}^{\infty}=\{f\in H(\D): \ \sup_{z\in \D}(1-|z|)^{n}|f(z)|<\infty \text{ for some } n\in \N \}.$$
	Albanese et al. \cite{ABR2018} determined the spectrum of the Cesàro operator $C$ acting on the Korenblum type spaces $A_+^{-\gamma},$  $A_{-}^{-\gamma}$  and $A^{-\infty}$ and used it to prove the operator  	is never compact on  them.

	\begin{proposition}
		Let $\mu$ be a positive finite Borel measure on $[0,1).$ The operator $C_{\mu}: A^{-\infty}\rightarrow A^{-\infty}$ is continuous.
	\end{proposition}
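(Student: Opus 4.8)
The plan is to reduce the continuity of $C_\mu$ on the (LB)-space $A^{-\infty}=\ind_{n\in\N}H_{v_n}^{\infty}$ to a single bounded-operator estimate between consecutive steps of the inductive limit. Since $A^{-\infty}$ is a regular (LB)-space continuously embedded in $H(\D)$, the closed graph theorem \cite[Theorem 24.31]{meisevogt} tells us that it suffices to verify that $C_\mu$ is well defined on $A^{-\infty}$; equivalently, by the universal property of the inductive limit, it is enough to show that $C_\mu$ maps each step $H_{v_n}^{\infty}$ continuously into some step $H_{v_m}^{\infty}$.

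First I would fix $n\in\N$ and $f\in H_{v_n}^{\infty}$ and use the integral representation $C_\mu(f)(z)=\int_0^1\frac{f(tz)}{1-tz}\,d\mu(t)$ from \cite[Proposition 1]{GGM}. Applying $|f(w)|\leq\norm{f}_{v_n}(1-|w|)^{-n}$ at $w=tz$ together with the elementary bound $|1-tz|\geq 1-t|z|$ gives
$$|C_\mu(f)(z)|\leq \norm{f}_{v_n}\int_0^1\frac{d\mu(t)}{(1-t|z|)^{n+1}}.$$
The key step is then the observation that, since $\mu$ is supported on $[0,1)$ (so $t\leq 1$) and $|z|=r<1$, one has $1-tr\geq 1-r$, whence $(1-tr)^{-(n+1)}\leq(1-r)^{-(n+1)}$. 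Pulling this factor out of the integral and using the finiteness of $\mu$ yields
$$(1-r)^{n+1}|C_\mu(f)(z)|\leq \mu([0,1))\,\norm{f}_{v_n},$$
that is, $\norm{C_\mu(f)}_{v_{n+1}}\leq\mu([0,1))\,\norm{f}_{v_n}$. Hence $C_\mu\colon H_{v_n}^{\infty}\to H_{v_{n+1}}^{\infty}$ is bounded, and composing with the continuous inclusion $H_{v_{n+1}}^{\infty}\hookrightarrow A^{-\infty}$ and invoking the universal property of the inductive limit delivers the continuity of $C_\mu$ on $A^{-\infty}$.

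There is no serious obstacle here: the entire argument rests on the fact that on the Korenblum scale one is free to lose a power of the weight, so the crude estimate $|1-tz|\geq 1-r$, rather than any Carleson-type control on $\mu$, already suffices. The only point requiring minor care is to keep the gain in the weight exponent uniform in $n$ (namely exactly one unit), which is what makes the target step $H_{v_{n+1}}^{\infty}$ independent of $f$ and thereby guarantees that $C_\mu$ is well defined on the whole inductive limit.
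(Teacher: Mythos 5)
Your proposal is correct and takes essentially the same approach as the paper: the bound $\|C_\mu(f)\|_{v_{n+1}}\leq\mu([0,1))\,\|f\|_{v_n}$ you derive from the integral representation is exactly the case $v=v_n$, $w=v_{n+1}$, $\delta=1$ of Proposition~\ref{Caract_cont_gral}(i), which the paper invokes via Corollary~\ref{Corollary_cont_gral}(ii). Your concluding step --- composing $C_\mu\colon H_{v_n}^{\infty}\to H_{v_{n+1}}^{\infty}$ with the inclusion $H_{v_{n+1}}^{\infty}\hookrightarrow A^{-\infty}$ and applying the universal property of the inductive limit --- is precisely the paper's appeal to the continuity of operators on inductive limits, so the only difference is that you inline the weight estimate instead of citing it.
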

	
	\begin{proof}
		For the weights $v_n(z)=(1-|z|)^n,$ $z\in \D,$  $n\in \N,$ the operator $C_{\mu}: H_{v_n}\rightarrow H_{v_{n+1}}$ is continuous for every $n\in \N$ by Corollary \ref{Corollary_cont_gral}. So, the continuity follows from the continuity of operators on  inductive limits.
	\end{proof}

	\begin{proposition}
		Assume $\mu$ is a positive finite Borel measure on $[0,1).$  The following are equivalent:
		\begin{itemize}
			\item[(i)] 	  $\mu$  is an $\eps$-Carleson measure for every $ 0<\eps<1,$
			\item[(ii)] $C_{\mu}: A_+^{-\gamma}\rightarrow A_+^{-\gamma}$ is continuous for every $\gamma\geq 0,$
			\item[(iii)] $C_{\mu}: A_{-}^{-\gamma}\rightarrow A_{-}^{-\gamma}$ is continuous for every $\gamma> 0.$
			
		\end{itemize}
		
	\end{proposition}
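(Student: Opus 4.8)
The plan is to reduce the whole equivalence to the one-step characterization in Theorem \ref{Teorema cont}, exploiting the description of $A_+^{-\gamma}=\bigcap_k H_{v_{\gamma+1/k}}^{\infty}$ as a projective limit and of $A_-^{-\gamma}=\ind_k H_{v_{\gamma-1/k}}^{\infty}$ as an inductive limit of the Banach spaces $H_{v_\beta}^{\infty}$. Throughout I would use the elementary monotonicity of the Carleson condition: if $\mu$ is $s$-Carleson and $0<s'<s$, then $\mu$ is $s'$-Carleson, because $(1-t)^{s}\le (1-t)^{s'}$ for $0\le t<1$. In particular, condition (i) is equivalent to $\mu$ being $(1-\tfrac1k)$-Carleson for every $k\in\N$.

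For (i)$\Rightarrow$(ii) I would invoke the standard continuity criterion for a self-map of a Fréchet space given by an increasing fundamental system of seminorms: $C_\mu$ is continuous on $A_+^{-\gamma}$ as soon as, for every target index $k$, there are an index $j$ and a constant $C$ with $\|C_\mu f\|_{v_{\gamma+1/k}}\le C\|f\|_{v_{\gamma+1/j}}$; the same estimates force $C_\mu(A_+^{-\gamma})\subseteq A_+^{-\gamma}$. First I would fix $k$ and take $j=k+1$, so that the shift is $\delta=\tfrac1k-\tfrac1{k+1}\in(0,1)$ and $s:=1-\delta\in(0,1)$. Since $\mu$ is $s$-Carleson by (i), the equivalence (iv)$\Rightarrow$(i) of Theorem \ref{Teorema cont} yields the continuity of $C_\mu\colon H_{v_{\gamma+1/(k+1)}}^{\infty}\to H_{v_{\gamma+1/k}}^{\infty}$, which is exactly the required estimate. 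The implication (i)$\Rightarrow$(iii) is the mirror image: it suffices to map each step of $A_-^{-\gamma}$ continuously into the next one, and with source $\gamma-\tfrac1k$ and target $\gamma-\tfrac1{k+1}$ the shift is again $\delta=\tfrac1k-\tfrac1{k+1}\in(0,1)$, so Theorem \ref{Teorema cont} gives the continuity of $C_\mu\colon H_{v_{\gamma-1/k}}^{\infty}\to H_{v_{\gamma-1/(k+1)}}^{\infty}$ and hence of $C_\mu$ on the inductive limit. In both cases one checks that $\delta$ lies in the admissible range $(-a,1)$ of Theorem \ref{Teorema cont}, which is immediate since $\delta>0$ and $\delta<1$.

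For the converses I would test against the explicit functions $g_\alpha(z)=(1-z)^{-\alpha}$. For (ii)$\Rightarrow$(i), fix any $\gamma>0$ and note $g_\gamma\in H_{v_\gamma}^{\infty}\subseteq A_+^{-\gamma}$; by (ii), $C_\mu(g_\gamma)\in A_+^{-\gamma}\subseteq H_{v_{\gamma+1/k}}^{\infty}$ for every $k$. Applying the equivalence (ii)$\Leftrightarrow$(iv) of Theorem \ref{Teorema cont} with $\delta=\tfrac1k$ then shows that $\mu$ is $(1-\tfrac1k)$-Carleson for every $k$, whence $\mu$ is $\eps$-Carleson for all $\eps\in(0,1)$ by monotonicity. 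For (iii)$\Rightarrow$(i) I would use $g_\beta\in H_{v_\beta}^{\infty}\subseteq A_-^{-\gamma}$ for $\beta<\gamma$; by (iii), $C_\mu(g_\beta)$ lands in \emph{some} step $H_{v_{\gamma-1/m}}^{\infty}$. Choosing $\beta\in(\max\{0,\gamma-1\},\gamma)$ guarantees that the shift $\delta=(\gamma-\tfrac1m)-\beta$ satisfies $\delta<\gamma-\beta<1$ and $\delta>-\beta$, so Theorem \ref{Teorema cont} applies and gives that $\mu$ is $(1-\delta)$-Carleson with $1-\delta=1-\gamma+\tfrac1m+\beta>1-\gamma+\beta$; letting $\beta\uparrow\gamma$ pushes this exponent arbitrarily close to $1$, and monotonicity again delivers (i).

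The step I expect to be the main obstacle is this last one. In the inductive-limit case the image of a test function is only guaranteed to sit in \emph{some} unspecified step $H_{v_{\gamma-1/m}}^{\infty}$, so I cannot control $m$ directly; the argument must instead observe that \emph{whatever} $m$ occurs, the resulting Carleson exponent $1-(\gamma-\tfrac1m)+\beta$ still exceeds $1-\gamma+\beta$, which tends to $1$ as $\beta\uparrow\gamma$. The remaining care is bookkeeping: verifying that each test function genuinely belongs to the relevant space and that every shift $\delta$ stays inside the range $(-a,1)$ required by Theorem \ref{Teorema cont}; both are routine once the exponents are chosen as above.
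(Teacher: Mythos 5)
Your proof is correct, and its backbone — reducing everything to the step-wise characterization of Theorem \ref{Teorema cont} through the projective/inductive structure of $A_+^{-\gamma}$ and $A_-^{-\gamma}$ — is the same as the paper's; but your treatment of the converse directions is genuinely different. The paper derives necessity abstractly: continuity on the Fr\'echet space $A_+^{-\gamma}$ forces, for each $k$, a continuous step operator $C_\mu\colon H^\infty_{v_{\gamma+1/l}}\to H^\infty_{v_{\gamma+1/k}}$ (via \cite[Lemma 2.5]{ABR2019}), and continuity on the (LB)-space $A_-^{-\gamma}$ is factorized through the Banach steps by Grothendieck's factorization theorem \cite[Theorem 24.33]{meisevogt}; the necessity half of Theorem \ref{Teorema cont} is then applied to those step operators. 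You instead evaluate on the single test functions $g_\beta(z)=(1-z)^{-\beta}$ and invoke the equivalence (ii)$\Leftrightarrow$(iv) of Theorem \ref{Teorema cont}; this needs only the set-theoretic fact that $C_\mu(g_\beta)$ lies in some step of the union $A_-^{-\gamma}=\bigcup_m H^\infty_{v_{\gamma-1/m}}$, so you bypass Grothendieck factorization entirely and in fact use only well-definedness of $C_\mu$, not its continuity, in the necessity directions. Your resolution of the unknown step index $m$ — observing that the resulting Carleson exponent $1-\gamma+\tfrac1m+\beta$ is bounded below by $1-\gamma+\beta$ uniformly in $m$, then letting $\beta\uparrow\gamma$ and using the monotonicity of the Carleson condition — is exactly the right fix, and the exponent bookkeeping ($\beta>\max\{0,\gamma-1\}$ keeps $\delta$ inside the admissible range) checks out. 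Two trivial points you left implicit: in (ii)$\Rightarrow$(i) one must take $k\geq 2$ so that $\delta=\tfrac1k<1$, and since (ii) is assumed for all $\gamma\geq 0$ while Theorem \ref{Teorema cont} requires a strictly positive source exponent, one works with some fixed $\gamma>0$, as you do. Your sufficiency argument (consecutive steps with shift $\delta=\tfrac1k-\tfrac1{k+1}$) matches the paper's up to cosmetics — the paper instead routes through an intermediate space $H^\infty_{v_{\gamma+1/l+\delta}}$ and composes with an inclusion. Net comparison: your converses are more elementary and self-contained, while the paper's factorization argument yields the slightly stronger structural conclusion that the operator factors continuously through explicit Banach steps.
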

	
	\begin{proof}
		(i) $\Leftrightarrow$ (ii)	 It is known that $C_{\mu}$ is continuous on $A_+^{-\gamma}$ if and only if for every $k\in \N,$ there exists $l\in \N,$ $l\geq k$ such that  $C_{\mu}: H_{v_{\gamma +\frac{1}{l}}}^{\infty}\rightarrow H_{v_{\gamma +\frac{1}{k}}}^{\infty}$ is continuous (see  e.g. \cite[Lemma 2.5]{ABR2019}). So, a necessary condition for the continuity is that $\mu$ is a $(1-(\frac{1}{k}-\frac{1}{l}))$-Carleson measure. As this must be satisfied for every $k\in \N,$ we get $\mu$  must be an $\eps$-Carleson measure for every $ 0<\eps<1.$ Let us see the suficiency.
		Given $k\in \N,$ select $\delta>0 $ with $\delta<\frac{1}{k}.$ For $\frac{1}{l}<\frac{1}{k}-\delta,$ as $\mu $ is a $(1-\delta)$-Carleson measure, Theorem \ref{Teorema cont} yields
		$$C_{\mu}: H_{v_{\gamma +\frac{1}{l}}}^{\infty}\rightarrow H_{v_{\gamma +\frac{1}{l}+\delta}}^{\infty}$$ is continuous.
		Since  $ \gamma +\frac{1}{l}+\delta<\gamma +\frac{1}{k},$ the  continuity follows.
		
		\noindent
		(i) $\Leftrightarrow$ (iii) By the Grothendieck factorization theorem  \cite[Theorem 24.33]{meisevogt}, $C_{\mu}$ is continuous on $A_{-}^{-\gamma}$ if and only if for every $k\in \N,$ there exists $l\in \N,$ $l\geq k$ such that  $C_{\mu}: H_{v_{\gamma -\frac{1}{k}}}^{\infty}\rightarrow H_{v_{\gamma -\frac{1}{l}}}^{\infty}$ is continuous.  Therefore, $\mu$ must be a $(1-(\frac{1}{k}-\frac{1}{l}))$-Carleson measure for every $k\in \N,$ i.e. (i) must be satisfied. Assume now (i) holds. Given $k\in \N,$ select $\delta>0 $ with $\delta<\frac{1}{k}$ and $l\in \N$ such that $\frac{1}{l}<\frac{1}{k}-\delta.$ As $\mu $ is a $(1-\delta)$-Carleson measure, Theorem \ref{Teorema cont} yields
		$$C_{\mu}: H_{v_{\gamma -\frac{1}{k}}}^{\infty}\rightarrow H_{v_{\gamma -\frac{1}{k}+\delta}}^{\infty}$$ is continuous.
		Since  $ \gamma -\frac{1}{k}+\delta<\gamma -\frac{1}{l},$ we get the continuity.		
	\end{proof}

Now we investigate  the spectrum of the Cesàro-type operator  acting on the Korenblum space $A^{-\infty}.$ By Proposition \ref{Espectre HD} and Corollary \ref{fins_n dins espectre_punt_Hv} we get:

\begin{proposition}\label{Espectre Ainfty}
  Let $\mu$ be a positive finite Borel measure on $[0,1)$ such that $\mu((0,1))>0$ and consider $C_\mu: A^{-\infty}\to A^{-\infty}.$ If $\mu$ is a Carleson measure, then, $\sigma_p(C_{\mu},A^{-\infty})=\{\mu_n: \ n\in\N_0\}.$
\end{proposition}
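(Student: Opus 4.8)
The plan is to split the claimed equality into its two inclusions and to outsource each to a result already proved: the containment $\sigma_p(C_\mu, A^{-\infty}) \subseteq \{\mu_n : n\in\N_0\}$ will come from Proposition \ref{Espectre HD}, and the reverse containment from Corollary \ref{fins_n dins espectre_punt_Hv}. The Carleson hypothesis on $\mu$ enters only through the latter; the forward inclusion needs no assumption beyond $\mu((0,1))>0$.

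For the forward inclusion I would first recall that $A^{-\infty}=\ind_{n} H_{v_n}^{\infty}$ embeds continuously into $H(\D)$ and that $C_\mu$ on $A^{-\infty}$ is nothing but the restriction of $C_\mu$ on $H(\D)$ (same power-series formula). Hence, if $\lambda$ is an eigenvalue of $C_\mu$ on $A^{-\infty}$ with eigenfunction $g\neq 0$, then $g\in H(\D)$ and the identity $C_\mu g=\lambda g$ already holds in $H(\D)$, so $\lambda$ is an eigenvalue of $C_\mu:H(\D)\to H(\D)$. Proposition \ref{Espectre HD} then forces $\lambda\in\{\mu_n:n\in\N_0\}$, which gives the inclusion.

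For the reverse inclusion I would fix $n\in\N_0$ and apply Corollary \ref{fins_n dins espectre_punt_Hv} with $n_0=n$: since $\mu$ is a Carleson measure, there is a $\gamma>0$ with $\mu_n\in\sigma_p(C_\mu,H_{v_\gamma}^{\infty})$, i.e.\ a nonzero $g\in H_{v_\gamma}^{\infty}$ solving $C_\mu g=\mu_n g$. Choosing any integer $m\geq\gamma$ and using the continuous inclusions $H_{v_\gamma}^{\infty}\subseteq H_{v_m}^{\infty}\subseteq A^{-\infty}$, one sees that $g$ is a nonzero element of $A^{-\infty}$ satisfying the same eigenvalue equation, so $\mu_n\in\sigma_p(C_\mu,A^{-\infty})$. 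Letting $n$ range over $\N_0$ finishes this direction, and combining the two inclusions yields $\sigma_p(C_\mu,A^{-\infty})=\{\mu_n:n\in\N_0\}$.

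The argument is short precisely because both technical steps are already available, so there is no genuine computational obstacle; the only point worth emphasizing is structural. In Corollary \ref{fins_n dins espectre_punt_Hv} the exponent $\gamma$ needed to realize $\mu_n$ as an eigenvalue may grow with $n$, and inside a single Banach space $H_{v_\gamma}^{\infty}$ this would be an obstruction to capturing \emph{all} moments at once. Working in the inductive limit $A^{-\infty}=\bigcup_m H_{v_m}^{\infty}$ dissolves this difficulty: the union absorbs every eigenfunction regardless of the $\gamma$ required, so no uniform choice of $\gamma$ is needed. This is exactly the feature of the Korenblum space that makes the full moment sequence appear in the point spectrum.
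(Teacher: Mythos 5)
Your proof is correct and matches the paper's own argument, which derives the proposition in exactly this way: the inclusion $\sigma_p(C_\mu,A^{-\infty})\subseteq\{\mu_n:n\in\N_0\}$ from Proposition \ref{Espectre HD} via the embedding $A^{-\infty}\subseteq H(\D)$, and the reverse inclusion from Corollary \ref{fins_n dins espectre_punt_Hv}, with the inductive limit absorbing each eigenfunction $g\in H_{v_\gamma}^\infty\subseteq A^{-\infty}$ no matter how large the required $\gamma$. Your closing structural remark about why no uniform $\gamma$ is needed is precisely the point implicit in the paper's one-line citation of these two results.
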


\begin{lemma}{\cite[Lemma 9]{Orts} }\label{LemmaOrts}
A function $f(z)=\sum_{n=0}^{\infty}a_nz^n\in H(\D)$ belongs to  $A^{-\infty}$
if and only if there exists $k\in \N$ such that $\sup_n\frac{|a_n|}{(n+k)^k}<\infty$.
\end{lemma}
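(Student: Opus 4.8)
The plan is to reduce the statement to two elementary estimates linking the boundary growth of $f$ to the size of its Taylor coefficients, using that, by definition, $f\in A^{-\infty}$ means exactly that $f\in H_{v_m}^\infty$ for some $m\in\N$, i.e.\ $|f(z)|\leq C(1-|z|)^{-m}$ on $\D$. Both implications are then obtained from the Cauchy estimates together with the classical generating-function identity $\sum_{n=0}^\infty\binom{n+k}{k}r^n=(1-r)^{-(k+1)}$.

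For the implication from membership to a coefficient bound, I would assume $f\in H_{v_m}^\infty$, so that $\max_{|z|=r}|f(z)|\leq C(1-r)^{-m}$ for every $0\leq r<1$. Applying the Cauchy inequalities on the circle $|z|=r$ gives $|a_n|\leq r^{-n}\max_{|z|=r}|f(z)|\leq C\,r^{-n}(1-r)^{-m}$. The free radius can be optimised, for $n\geq 1$, by taking $r=\frac{n}{n+m}$, so that $1-r=\frac{m}{n+m}$ and $r^{-n}=\left(1+\frac{m}{n}\right)^n$. Since $\left(1+\frac{m}{n}\right)^n\to e^m$ this factor stays bounded, while $(1-r)^{-m}=m^{-m}(n+m)^m$; hence $|a_n|\leq C'(n+m)^m$ for a suitable constant $C'$ and all $n\geq 1$ (the single term $n=0$ being trivially controlled). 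Taking $k=m$ then yields $\sup_n |a_n|/(n+k)^k<\infty$.

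For the converse, suppose $|a_n|\leq M(n+k)^k$ for all $n$ and some $k\in\N$. For $|z|=r<1$ I would estimate $|f(z)|\leq\sum_{n=0}^\infty|a_n|r^n\leq M\sum_{n=0}^\infty(n+k)^k r^n$, and then compare $(n+k)^k$ with the binomial coefficient $\binom{n+k}{k}$. Since each of the $k$ factors of $(n+1)(n+2)\cdots(n+k)$ is at least $\frac{n+k}{k}$, one gets $\binom{n+k}{k}\geq \frac{(n+k)^k}{k^k\,k!}$, i.e.\ $(n+k)^k\leq k^k\,k!\,\binom{n+k}{k}$. Substituting this and summing the binomial series gives $|f(z)|\leq M\,k^k\,k!\,(1-r)^{-(k+1)}$, so that $(1-|z|)^{k+1}|f(z)|$ is bounded on $\D$ and $f\in H_{v_{k+1}}^\infty\subseteq A^{-\infty}$.

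I do not expect a genuine obstacle here, as the argument is entirely classical. The only points requiring a little care are the choice of the optimising radius $r=\frac{n}{n+m}$ in the first implication (any comparable choice works, but this one makes the constants explicit through $\left(1+\frac{m}{n}\right)^n\to e^m$) and the elementary comparison between $(n+k)^k$ and $\binom{n+k}{k}$ in the second, which is precisely what lets the exact coefficient profile $(n+k)^k$ match polynomial boundary growth of arbitrary order.
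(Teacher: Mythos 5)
Your proof is correct. The paper itself gives no proof of this lemma---it simply cites \cite[Lemma 9]{Orts}---and your argument (Cauchy estimates on $|z|=r$ with the optimising radius $r=\frac{n}{n+m}$ for the necessity of the coefficient bound, and the elementary comparison $(n+k)^k\leq k^k\,k!\,\binom{n+k}{k}$ combined with the binomial series $\sum_{n\geq 0}\binom{n+k}{k}r^n=(1-r)^{-(k+1)}$ for sufficiency) is exactly the standard classical argument behind the cited result, so it matches the intended proof in all essentials.
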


\begin{proposition}
Let $\mu$ be a positive finite Borel measure on $[0,1)$ such that $\mu((0,1))>0$  and consider $C_\mu: A^{-\infty}\to A^{-\infty}.$ If $\mu$ is a Carleson measure, then the following are equivalent:
	\begin{itemize}
		\item[(i)] $0\notin \sigma(C_{\mu},A^{-\infty}).$
		\item[(ii)] 	$C_\mu: A^{-\infty}\to A^{-\infty}$  is surjective.
		\item[(iii)] $M_{(1/\mu_n)_n}: A^{-\infty}\to A^{-\infty},$ \ $f(z)=\sum_{n=0}^{\infty}b_nz^n\mapsto h_f(z)=\sum_{n=0}^{\infty}\frac{b_n}{\mu_n}z^n$ is continuous.
		\item[(iv)] There exist $C, s>0$ such that $\frac{1}{\mu_n}\leq Cn^s$ for every $n\in \N.$
	\end{itemize}
	
\end{proposition}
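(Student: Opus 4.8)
The plan is to establish the cycle (i) $\Leftrightarrow$ (ii) $\Leftrightarrow$ (iii) $\Leftrightarrow$ (iv), transposing to the $(LB)$-setting the computation used for $H(\D)$ in Proposition \ref{C_mu sobre HD} and feeding it with the coefficient description of $A^{-\infty}$ furnished by Lemma \ref{LemmaOrts}. For (i) $\Leftrightarrow$ (ii): since $A^{-\infty}$ is a regular $(LB)$-space, the closed graph theorem \cite[Theorem 24.31]{meisevogt} gives that $0\notin\sigma(C_\mu,A^{-\infty})$ is equivalent to $C_\mu$ being bijective. Because $A^{-\infty}\subseteq H(\D)$ and every moment $\mu_n>0$ (as $\mu((0,1))>0$), Proposition \ref{Espectre HD} shows $0\notin\sigma_p(C_\mu,H(\D))$, so $C_\mu$ is injective on $H(\D)$ and hence on $A^{-\infty}$. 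Thus bijectivity reduces to surjectivity, which is exactly (ii).

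Next, for (ii) $\Leftrightarrow$ (iii) I would mimic the coefficient computation of Proposition \ref{C_mu sobre HD}. Normalizing $\mu_0=1$ and solving $C_\mu(g_f)=f$ for $f(z)=\sum_{n=0}^\infty b_n z^n$ forces $\sum_{k=0}^n a_k=b_n/\mu_n$, whence $a_n=b_n/\mu_n-b_{n-1}/\mu_{n-1}$; equivalently, as formal power series, $g_f(z)=(1-z)h_f(z)$ where $h_f=M_{(1/\mu_n)_n}(f)=\sum_{n=0}^\infty (b_n/\mu_n)z^n$. The key structural remark is that $A^{-\infty}$ is a multiplicative algebra: if $\varphi\in H_{v_n}^\infty$ and $\psi\in H_{v_m}^\infty$ then $\varphi\psi\in H_{v_{n+m}}^\infty$, because $(1-|z|)^{n+m}|\varphi\psi|\le\|\varphi\|_{v_n}\|\psi\|_{v_m}$. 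Since this algebra contains both the polynomial $1-z$ and $\frac{1}{1-z}\in H_{v_1}^\infty$, multiplication by either factor preserves membership in $A^{-\infty}$ in both directions, so $g_f\in A^{-\infty}$ if and only if $h_f=g_f/(1-z)\in A^{-\infty}$. As $C_\mu$ is injective, the forced series $g_f$ is the only candidate preimage, so $C_\mu$ is surjective precisely when every $h_f$ lies in $A^{-\infty}$, i.e. when $M_{(1/\mu_n)_n}$ maps $A^{-\infty}$ into itself; by the closed graph theorem this well-definedness is equivalent to continuity.

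Finally, for (iii) $\Leftrightarrow$ (iv) I would invoke Lemma \ref{LemmaOrts}, which identifies $A^{-\infty}$ with the power series whose coefficients grow at most polynomially. If $M_{(1/\mu_n)_n}$ is continuous, applying it to $\frac{1}{1-z}=\sum_{n=0}^\infty z^n\in A^{-\infty}$ yields $\sum_{n=0}^\infty (1/\mu_n)z^n\in A^{-\infty}$, so Lemma \ref{LemmaOrts} provides $k\in\N$ with $1/\mu_n=O((n+k)^k)$, which is (iv). Conversely, if $1/\mu_n\le Cn^s$ and $f=\sum_{n=0}^\infty b_n z^n\in A^{-\infty}$ has $|b_n|\lesssim (n+k)^k$, then $|b_n/\mu_n|\lesssim n^s(n+k)^k$ is again polynomially bounded, so $h_f\in A^{-\infty}$ and $M_{(1/\mu_n)_n}$ is well defined, hence continuous.

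The main obstacle I expect is the middle equivalence (ii) $\Leftrightarrow$ (iii): one must justify cleanly that multiplication by $\frac{1}{1-z}$ (and by $1-z$) keeps us inside $A^{-\infty}$ — for which the algebra property of the Korenblum space is the right tool — and then correctly pass from mere well-definedness of $M_{(1/\mu_n)_n}$ to its continuity through the closed graph theorem for regular $(LB)$-spaces. By contrast, the outer equivalences amount to routine coefficient bookkeeping once Lemma \ref{LemmaOrts} and the injectivity from Proposition \ref{Espectre HD} are in hand; note also that the Carleson hypothesis is not really needed for the four equivalences themselves, but it is the natural standing assumption carrying over from Proposition \ref{Espectre Ainfty}.
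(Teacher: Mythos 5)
Your proposal is correct and follows essentially the same route as the paper: (i)$\Leftrightarrow$(ii) via the point-spectrum description, (ii)$\Leftrightarrow$(iii) via the coefficient recursion from Proposition \ref{C_mu sobre HD} and the factorization $g_f(z)=(1-z)h_f(z)$, and (iii)$\Leftrightarrow$(iv) via Lemma \ref{LemmaOrts} tested on $f(z)=\frac{1}{1-z}$. Your explicit justification of the multiplicative step through the algebra property of $A^{-\infty}$, and your observation that the Carleson hypothesis is not actually used in these four equivalences (it enters only through the standing context of Proposition \ref{Espectre Ainfty}), are accurate refinements of details the paper leaves implicit.
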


\begin{proof}
	(i) $\Leftrightarrow$ (ii) by Proposition \ref{Espectre Ainfty}.\\
	(ii) $\Leftrightarrow$ (iii) 	Proceeding as in the proof of Proposition \ref{C_mu sobre HD}, we get that
	$C_\mu: A^{-\infty}\to A^{-\infty}$  is surjective if and only if
	\begin{equation}\label{antiimatgeHD}
		g_f(z)=b_0+\sum_{n=1}^{\infty}\left( \frac{b_n}{\mu_n}-\frac{b_{n-1}}{\mu_{n-1}} \right)z^n\in A^{-\infty}
	\end{equation}
	for every $f(z)=\sum_{n=0}^{\infty}b_nz^n\in A^{-\infty}.$ And this condition is equivalent to  saying that the operator
	\begin{equation}\label{multipliers surj HD}
		M_{(1/\mu_n)_n}: A^{-\infty}\to A^{-\infty},\ f(z)=\sum_{n=0}^{\infty}b_nz^n\mapsto h_f(z)=\sum_{n=0}^{\infty}\frac{b_n}{\mu_n}z^n
	\end{equation}
	is continuous. Indeed,  $g_f(z)=(1-z)h_f(z)\in A^{-\infty}$ if and only if $h_f\in A^{-\infty}.$\\
	(iii) $\Leftrightarrow$ (iv)  Lemma \ref{LemmaOrts} implies that $h_f(z)=\sum_{n=0}^{\infty}\frac{b_n}{\mu_n}z^n\in A^{-\infty}$ if and only if there exist $k\in \N$  and $D>0$ such that $\frac{|b_n|}{\mu_n}\leq Dn^k$ for every $n\in \N.$ So, if this condition is satisfied and we consider  $f(z)=\frac{1}{1-z}\in A^{-\infty},$  since  $b_n=1$ for every $n\in \N$ we get $\frac{1}{\mu_n}\leq Dn^{k}$ for every $n\in \N.$ On the other hand, if  there exist $s, C>0$  such that $\frac{1}{\mu_n}\leq Cn^s$  for every $n\in \N,$ the conclusion follows easily  by Lemma \ref{LemmaOrts}, as $f\in A^{-\infty}$ yields the existence of $k\in \N$  and $D>0$ such that $b_n \leq Dn^k$ for every $n\in \N.$ Now,  Lemma \ref{LemmaOrts} implies (iv).
\end{proof}

\begin{example}
	Let $\mu$ be a positive finite Borel measure on $[0,1)$  and consider $C_\mu: A^{-\infty}\to A^{-\infty}.$
	\begin{itemize}
		\item[(i) ]	 If $\mu$ satisfies $\mu_n\cong \frac{1}{n^s},$ $s>0,$ then $0\notin\sigma(C_{\mu}).$
		\item[(ii)]  If $\mu$ satisfies  $\mu_n=\frac{1}{2^n},$ $n\in \N,$ then $0\in \sigma(C_{\mu}).$
	\end{itemize}
\end{example}

\begin{proposition}\label{Cmu-I sobre Korenb}
Let $\mu$ be a positive finite Borel measure on $[0,1)$ such that $\mu((0,1))>0$  and consider $C_\mu: A^{-\infty}\to A^{-\infty}.$ If $\mu$ is a Carleson measure and $\lambda\notin \{\mu_n: n\in \N_0 \}\cup\{0\},$  then  the operator $C_{\mu}-\lambda I:A^{-\infty}\to A^{-\infty} $ is surjective, i.e., $\lambda\notin \sigma(C_{\mu}, A^{-\infty})$.
\end{proposition}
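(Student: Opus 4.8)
The plan is to show surjectivity of $C_\mu-\lambda I$ by explicitly solving the equation $(C_\mu-\lambda I)(g_f)=f$ for an arbitrary $f(z)=\sum_{k=0}^\infty b_k z^k\in A^{-\infty}$, and verifying that the candidate solution lies in $A^{-\infty}$ using Lemma \ref{LemmaOrts}. The coefficient recursion is exactly the one already computed in the proof of Proposition \ref{C_mu-lambdaI sobre HD}(b): setting $g_f(z)=\sum_{k=0}^\infty a_k z^k$ with $\mu_0=1$, one obtains
$$
a_k=\frac{b_k}{\mu_k-\lambda}-\frac{\mu_k}{\lambda^2}\sum_{n=0}^{k-1}\frac{b_n}{\prod_{j=n}^{k}\left(1-\frac{\mu_j}{\lambda}\right)}.
$$
Since $\lambda\notin\{\mu_n\}\cup\{0\}$, there is $\delta>0$ with $|\mu_k-\lambda|\geq\delta$ for all $k$, so the first term contributes a sequence in $A^{-\infty}$ by Lemma \ref{LemmaOrts} (because $(b_k)_k$ grows at most polynomially). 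The whole problem reduces to controlling the second term, i.e. to showing
$$
h(z):=\sum_{k=0}^\infty \mu_k\left(\sum_{n=0}^{k-1}\frac{b_n}{\prod_{j=n}^{k}\left(1-\frac{\mu_j}{\lambda}\right)}\right)z^k\in A^{-\infty}.
$$

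First I would invoke the Carleson hypothesis to fix $C>0$ with $\mu_j\leq C/j$ for all $j\in\N$. Then Lemma \ref{Carleson_estim_inf_prod}(i) together with Remark \ref{remarkl} furnishes $M>0$, independent of both $n$ and $k$, such that
$$
\prod_{j=n}^{k}\left|1-\frac{\mu_j}{\lambda}\right|\geq \frac{M}{k^{\lceil C|Re(1/\lambda)|\rceil}}
$$
for all $n\in\N_0$ and $k\geq n$. This is the crucial uniform lower bound on the product appearing in the denominators. Writing $m:=\lceil C|Re(1/\lambda)|\rceil$, the $k$-th coefficient of $h$ is therefore bounded by
$$
\mu_k\left|\sum_{n=0}^{k-1}\frac{b_n}{\prod_{j=n}^{k}\left(1-\frac{\mu_j}{\lambda}\right)}\right|
\leq \frac{1}{M}\,k^{m}\,\mu_k\sum_{n=0}^{k-1}|b_n|.
$$

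To finish I would argue as in Proposition \ref{Espectre C_mu HD}: since $f\in A^{-\infty}$, Lemma \ref{LemmaOrts} gives $k_0\in\N$ and $K>0$ with $|b_n|\leq K(n+k_0)^{k_0}$, whence $\sum_{n=0}^{k-1}|b_n|\leq K' k^{k_0+1}$ for some $K'>0$. Combining this with $\mu_k\leq C/k\leq C$ bounds the $k$-th coefficient of $h$ by a polynomial in $k$, so Lemma \ref{LemmaOrts} yields $h\in A^{-\infty}$. Hence $g_f\in A^{-\infty}$ for every $f\in A^{-\infty}$, so $C_\mu-\lambda I$ is surjective and, since it is injective by Proposition \ref{Espectre HD} (as $\lambda\notin\{\mu_n\}$), we conclude $\lambda\notin\sigma(C_\mu,A^{-\infty})$. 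The only delicate point is the uniformity of the constant $M$ in the product estimate: one must be careful that Remark \ref{remarkl} supplies a bound independent of the starting index $n$, since the inner sum ranges over all $n<k$; this is precisely where the hypothesis $\mu_j\leq C/j$ \emph{for all} $j$ (rather than merely eventually) is used.
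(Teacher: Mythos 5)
Your proposal is correct and follows essentially the same route as the paper's proof: the same coefficient recursion from Proposition \ref{C_mu-lambdaI sobre HD}(b), the same uniform lower bound on the products $\prod_{j=n}^{k}\left|1-\frac{\mu_j}{\lambda}\right|$ via Lemma \ref{Carleson_estim_inf_prod}(i) and Remark \ref{remarkl}, and the same verification of polynomial coefficient growth through Lemma \ref{LemmaOrts}. Your closing remark about the uniformity of $M$ in the starting index $n$ is precisely the point the paper addresses by choosing $C>0$ with $\mu_n\leq \frac{C}{n}$ for every $n\in\N$.
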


\begin{proof}
Let $\lambda\notin \{\mu_n: n\in \N_0 \}\cup\{0\}.$ By the proof of Proposition \ref{C_mu-lambdaI sobre HD}, it is enough to show that for every $f(z)=\sum_{n=0}^{\infty}b_nz^n\in A^{-\infty},$  the functions $g_f(z)=\sum_{n=0}^{\infty}\frac{b_n}{\mu_n-\lambda}z^n\in A^{-\infty}$  and \begin{equation*}
		 h_f(z)=\sum_{n=0}^{\infty}\mu_n\left(\sum_{k=0}^{n-1}\frac{b_k}{\prod_{j=k}^{n}\left(1-\frac{\mu_j}{\lambda}\right)}\right)z^n\in A^{-\infty}.
	\end{equation*}
By Lemma \ref{LemmaOrts},  there exists $s\in \N$ such that $\sup_n\frac{|b_n|}{(n+s)^s}<\infty.$ Therefore, as there exists $\delta>0$ with $|\mu_n-\lambda|\geq \delta$ for every $n\in \N,$ we get $\sup_n\frac{|b_n|}{|\mu_n-\lambda|(n+s)^s}<\infty$ and $g_f\in A^{-\infty}.$
Now,  take $m\in \N, \ m\geq s + \lceil C|Re(1/\lambda)|\rceil,$ where $C>0$ is such that $\mu_n\leq \frac{C}{n}$ for every $n\in \N.$ Now, by Lemma \ref{Carleson_estim_inf_prod}(i) and Remark \ref{remarkl} we get

\begin{eqnarray*}
 \sup_n\frac{1}{(n+m)^m}\mu_n\left|\sum_{k=0}^{n-1}\frac{b_k}{\prod_{j=k}^{n}\left(1-\frac{\mu_j}{\lambda}\right)}\right|&\lesssim& \sup_n\frac{n^{\lceil C|Re(1/\lambda)|\rceil-1}}{(n+m)^m}\sum_{k=0}^{n-1}|b_k|\nonumber\\
 &\lesssim &\sup_n\frac{n^{\lceil C|Re(1/\lambda)|\rceil-1}}{(n+m)^m}(n+s-1)^{s+1}<\infty.
\end{eqnarray*}
Therefore, Lemma \ref{LemmaOrts} yields $h_f\in A^{-\infty}$.
\end{proof}

As a consequence of Propositions \ref{Espectre Ainfty} and \ref{Cmu-I sobre Korenb}, the following corollary holds:

\begin{corollary}
    Let $\mu$ be a positive finite Borel measure on $[0,1)$ such that $\mu((0,1))>0$ and consider $C_\mu: A^{-\infty}\to A^{-\infty}.$ If $\mu$ is a Carleson measure, then $\sigma(C_{\mu}, A^{-\infty})\setminus \{0\}=\{\mu_n: \ n\in\N_0\}.$
\end{corollary}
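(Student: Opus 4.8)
The plan is to prove the two inclusions separately, exploiting the fact that $A^{-\infty}$ is a regular (LB) space, so by the closed graph theorem quoted at the start of Section~\ref{sec:Spectrum} one has $\sigma(C_{\mu},A^{-\infty})=\{\lambda\in\C:\ \lambda I-C_{\mu}\text{ is not bijective}\}$. This reduces everything to combining injectivity and surjectivity statements that are already available.

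For the inclusion $\{\mu_n:\ n\in\N_0\}\subseteq \sigma(C_{\mu},A^{-\infty})\setminus\{0\}$, I would invoke Proposition~\ref{Espectre Ainfty}, which under the Carleson hypothesis gives $\sigma_p(C_{\mu},A^{-\infty})=\{\mu_n:\ n\in\N_0\}$. Since the point spectrum is always contained in the spectrum, each $\mu_n$ lies in $\sigma(C_{\mu},A^{-\infty})$. It remains to note that these values are nonzero: because $\mu((0,1))>0$, the moments $\mu_n=\int_0^1 t^n\,d\mu(t)$ are strictly positive for every $n$, so $\{\mu_n:\ n\in\N_0\}\subseteq\sigma(C_{\mu},A^{-\infty})\setminus\{0\}$.

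For the reverse inclusion $\sigma(C_{\mu},A^{-\infty})\setminus\{0\}\subseteq\{\mu_n:\ n\in\N_0\}$, I would argue by contraposition: fix $\lambda\neq 0$ with $\lambda\notin\{\mu_n:\ n\in\N_0\}$ and show $\lambda\notin\sigma(C_{\mu},A^{-\infty})$. Since $\sigma_p(C_{\mu},A^{-\infty})=\{\mu_n:\ n\in\N_0\}$ by Proposition~\ref{Espectre Ainfty}, the assumption $\lambda\notin\{\mu_n:\ n\in\N_0\}$ forces $\lambda\notin\sigma_p$, i.e.\ $C_{\mu}-\lambda I$ is injective. On the other hand, Proposition~\ref{Cmu-I sobre Korenb} applies verbatim to such $\lambda$ (it covers exactly $\lambda\notin\{\mu_n:\ n\in\N_0\}\cup\{0\}$) and yields that $C_{\mu}-\lambda I$ is surjective. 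Thus $C_{\mu}-\lambda I$ is bijective, whence $\lambda\in\rho(C_{\mu},A^{-\infty})$.

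I do not expect any genuine obstacle here, as both ingredients are already established; the only point requiring care is the justification that bijectivity suffices to conclude $\lambda\notin\sigma$, which is precisely the regularity-plus-closed-graph remark for (LB) spaces recorded at the beginning of Section~\ref{sec:Spectrum}. Combining the two inclusions gives $\sigma(C_{\mu},A^{-\infty})\setminus\{0\}=\{\mu_n:\ n\in\N_0\}$, as claimed.
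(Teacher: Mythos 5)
Your proposal is correct and takes essentially the same route as the paper, which derives the corollary precisely as a consequence of Proposition~\ref{Espectre Ainfty} (giving $\sigma_p(C_{\mu},A^{-\infty})=\{\mu_n:\ n\in\N_0\}$, hence one inclusion) and Proposition~\ref{Cmu-I sobre Korenb} (surjectivity of $C_{\mu}-\lambda I$ for $\lambda\notin\{\mu_n:\ n\in\N_0\}\cup\{0\}$, hence the other). Your extra remarks---that $\mu((0,1))>0$ forces $\mu_n>0$ for all $n$, and that bijectivity suffices because $A^{-\infty}$ is a regular (LB) space so the closed graph theorem applies---simply make explicit what the paper leaves implicit.
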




\subsection*{Acknowledgments}
The research of the  three authors  was partially supported by GVA-AICO/2021/170. The research of the second and third authors was also partially supported by the project PID2020-119457GB-100 funded by MCIN/AEI/10.13039/501100011033 and by
``ERFD A way of making Europe''.

\section*{Statements and Declarations}

\begin{itemize}
	\item \textbf{Funding:} 	The research of the  three authors  was partially supported by GVA-AICO/2021/170. The research of the second and third authors was also partially supported by the project PID2020-119457GB-100 funded by MCIN/AEI/10.13039/501100011033 and by
	``ERFD A way of making Europe''.
	
	\item 	\textbf{Competing Interests:} The authors have no relevant financial or non-financial interests to disclose.
	
		\item 	\textbf{Availability of data and materials:}  	 Data sharing not applicable to this article as no datasets were generated 	or analysed.
	
	\item 	\textbf{Authors' contributions:} All authors contributed equally to the study conception and design. All authors read and approved the final manuscript.
	
\end{itemize}


%
%
``ERFD A way of making Europe''.

%
%


\begin{appendices}
	
\section{Hausdorff moment problem for Borel probabilities on $[0,1)$} \label{appendix}

This appendix collects a few classical results about  Hausdorff moments  for Borel probabilities on $[0,1)$. They provide us with examples  which are relevant in connection with the theorems in the article.

 We start with the following result, which is known to specialists. We state it and give its short proof, since it is very useful in the article.

 \begin{lemma}\label{moments}
Let $\mu$ be a positive finite Borel measure on $[0,1)$   and $(\mu_n)_n$ its moments.Then the following hold:
\begin{itemize}
	\item[(i)] $(\mu_n)_n$ is a  decreasing sequence satisfying $\lim_n\mu_n=0.$
		\item[(ii)] $\left( \frac{\mu_n}{\mu_{n+1}}\right)_n$ is a decreasing sequence bounded below by $1$, so the limit $L=\lim_n\frac{\mu_n}{\mu_{n+1}}$ exists, with $L\geq 1.$
		\item[(iii)] If $\mu((0,1))>0$, then   $(\mu_n)_n$ is a strictly decreasing sequence.
\end{itemize}
\end{lemma}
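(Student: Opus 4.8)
The plan is to treat the three assertions separately, each reducing to an elementary property of the integral $\mu_n=\int_0^1 t^n\,d\mu(t)$, with the only genuinely non-routine point being part (ii).

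For (i), I would first observe that for every fixed $t\in[0,1)$ the sequence $(t^n)_n$ is non-increasing, so $t^{n+1}\le t^n$; integrating this inequality against the positive measure $\mu$ gives $\mu_{n+1}\le\mu_n$, which is the monotonicity. For the limit, note that $t^n\to 0$ pointwise on $[0,1)$ and $0\le t^n\le 1$, with the constant function $1$ being $\mu$-integrable since $\mu$ is finite; the dominated convergence theorem then yields $\lim_n\mu_n=0$.

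The heart of the argument is (ii), and the key tool I would use is the log-convexity of the moment sequence. Writing $t^n=t^{(n-1)/2}\,t^{(n+1)/2}$ and applying the Cauchy--Schwarz inequality in $L^2(\mu)$ gives $\mu_n^2\le\mu_{n-1}\,\mu_{n+1}$. Rearranging, this reads $\mu_n/\mu_{n+1}\le\mu_{n-1}/\mu_n$, so the sequence $(\mu_n/\mu_{n+1})_n$ is non-increasing; and since $\mu_{n+1}\le\mu_n$ by (i), every term is $\ge 1$. A non-increasing sequence bounded below converges, so $L=\lim_n \mu_n/\mu_{n+1}$ exists with $L\ge 1$. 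Here one must keep the ratios well defined: if $\mu_{n_0+1}=0$ for some $n_0$, then $t^{n_0+1}=0$ $\mu$-almost everywhere, forcing $\mu$ to be a point mass at $0$; this degenerate case is the one excluded by the positivity hypothesis of (iii), and in the relevant situation $\mu((0,1))>0$ all moments are strictly positive so no ratio is indeterminate.

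For (iii), I would compute the difference $\mu_n-\mu_{n+1}=\int_0^1 t^n(1-t)\,d\mu(t)$. The integrand is nonnegative on $[0,1)$ and strictly positive on $(0,1)$, so the hypothesis $\mu((0,1))>0$ forces the integral to be strictly positive, giving $\mu_n>\mu_{n+1}$ for every $n$ and hence strict monotonicity. The main (mild) obstacle throughout is the bookkeeping of the degenerate case in which the moments vanish, precisely the case ruled out by $\mu((0,1))>0$; the one substantive step is recognizing that the monotonicity of the ratios in part (ii) is a disguised Cauchy--Schwarz / log-convexity statement rather than a direct consequence of (i).
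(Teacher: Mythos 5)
Your proof is correct and follows essentially the same route as the paper's: the same Cauchy--Schwarz/H\"older splitting $t^n=t^{\frac{n-1}{2}}\,t^{\frac{n+1}{2}}$ yielding the log-convexity $\mu_n^2\leq \mu_{n-1}\mu_{n+1}$ in (ii), and the same positivity-of-the-integral argument in (iii), with dominated convergence in (i) where the paper invokes monotone convergence (an immaterial difference). Your explicit observation that a vanishing moment forces $\mu$ to be a point mass at $0$, so that the ratios in (ii) are well defined outside this degenerate case, is a small point of care that the paper's proof leaves implicit.
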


\begin{proof}
 (i)  $(\mu_n)_n$ is  clearly   decreasing, so $\lim_n\mu_n=0$   by  the Monotone Convergence Theorem.\\
(ii)  By H\"older's inequality, we get
$$\int_{0}^{1} t^nd\mu\leq 	\left(\int_{0}^{1} (t^{\frac{n-1}{2}})^2d\mu\right)^{1/2} \left(\int_{0}^{1} (t^{\frac{n+1}{2}})^2d\mu\right)^{1/2},$$
therefore $\mu_n^2\leq \mu_{n-1} \mu_{n+1} $ and so, $\left( \frac{\mu_n}{\mu_{n+1}}\right)_n$ is a decreasing sequence. As $\frac{\mu_n}{\mu_{n+1}}\geq 1$ for every $n\in \N,$ then  $\lim_n\frac{\mu_n}{\mu_{n+1}}$ exists.\\
(iii)   By assumption, there are  $a,b\in (0,1)$  such that $\mu([a,b])>0.$ So, we can find $c_n>0$ such that $\int_{0}^{1}(t^n-t^{n+1})d\mu\geq\int_{a}^{b}(t^n-t^{n+1})d\mu\geq c_n\mu([a,b])>0, $ and we get $(\mu_n)_n$ is a strictly decreasing sequence.
\end{proof}

For a Borel probability measure on $[0,1)$, the condition $\mu((0,1))>0$ in Lemma \ref{moments} is equivalent to $\mu\neq \delta_0,$ where $\delta_0$ is the Dirac measure on $[0,1)$.\\

The \emph{difference sequences} of a given sequence $(a_n)_{n\in\N_0}$ of real numbers are defined by  $\Delta^0(a_n)=a_n$,  $\Delta^1(a_n)=a_{n+1}-a_n$, $\Delta^j(a_n)=\Delta^1(\Delta^{j-1}(a_n))$ for $j>1$. A sequence $(a_n)_n$ is said to be \emph{completely monotonic} whenever $((-1)^j\Delta^j(a_n))_{j\in\N_0}$ is a sequence of positive numbers for every $n\in \N$.

A sequence $(\mu_n)_{n\in\N_0}$ of positive numbers is said to be \emph{a solution of the Hausdorff moment problem} if there exists a Borel probability measure $\mu$ on $[0,1]$ such that $\mu_n=\int_0^1 t^nd\mu(t)$ for each $n\in\N_0$.

A proof of the following characterization due to Hausdorff can be seen in \cite{Schmudgen} Theorem 3.15.

\begin{theorem}[Hausdorff]
A sequence $(\mu_n)_{n\in\N_0}$ of positive numbers is a sequence of moments of a  Borel probability $\mu$ in $[0,1)$ if and only $\mu_0=1$, $(\mu_n)_{n\in\N_0}$ is completely monotonic and $\lim_n \mu_n=0$.
\end{theorem}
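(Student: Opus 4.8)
The plan is to deduce this from the classical Hausdorff theorem for the \emph{closed} interval $[0,1]$, isolating the role of each hypothesis. The computational engine is the finite-difference identity
$$(-1)^{j}\Delta^{j}\mu_{n}=\sum_{i=0}^{j}\binom{j}{i}(-1)^{i}\mu_{n+i},$$
which shows that if $\mu_{n}=\int_{0}^{1}t^{n}\,d\mu(t)$ for a Borel measure $\mu$ on $[0,1]$, then
$$(-1)^{j}\Delta^{j}\mu_{n}=\int_{0}^{1}t^{n}(1-t)^{j}\,d\mu(t)\geq 0 .$$
This already yields the \emph{necessity} of complete monotonicity, while $\mu_{0}=\mu([0,1])=1$ for a probability measure. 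The extra condition $\lim_{n}\mu_{n}=0$ is explained by dominated convergence: since $t^{n}\to 0$ pointwise on $[0,1)$, one has $\lim_{n}\mu_{n}=\mu(\{1\})$. Hence a representing probability on $[0,1]$ is carried by $[0,1)$ exactly when $\lim_{n}\mu_{n}=0$, and the problem reduces to the assertion that every completely monotonic sequence with $\mu_{0}=1$ is the moment sequence of a Borel probability on $[0,1]$.

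For this \emph{sufficiency} I would build approximating measures on the uniform grid. For each $N\in\N$ set
$$\nu_{N}:=\sum_{k=0}^{N}\binom{N}{k}(-1)^{N-k}\Delta^{N-k}\mu_{k}\,\delta_{k/N}.$$
The weight at $k/N$ equals $L(b_{N,k})$, where $b_{N,k}(t)=\binom{N}{k}t^{k}(1-t)^{N-k}$ and $L$ is the (formal, linear) functional determined by $L(t^{m})=\mu_{m}$; by the identity above it is nonnegative \emph{precisely} by complete monotonicity. Applying $L$ to the partition of unity $\sum_{k=0}^{N}b_{N,k}\equiv 1$ shows that the total mass of $\nu_{N}$ is $\mu_{0}=1$. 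Thus each $\nu_{N}$ is a Borel probability on the compact space $[0,1]$, and by weak-$*$ compactness of the set of such probabilities some subsequence $(\nu_{N_{j}})_{j}$ converges weak-$*$ to a Borel probability $\mu$ on $[0,1]$.

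It remains to verify that $\mu$ represents the sequence, that is $\int_{0}^{1}t^{m}\,d\mu=\mu_{m}$ for every $m$. Since $t\mapsto t^{m}$ is continuous, weak-$*$ convergence gives $\int t^{m}\,d\mu=\lim_{j}\int t^{m}\,d\nu_{N_{j}}$, so it suffices to prove $\int_{0}^{1}t^{m}\,d\nu_{N}\to\mu_{m}$ as $N\to\infty$. Here $\int t^{m}\,d\nu_{N}=\sum_{k}(k/N)^{m}L(b_{N,k})$, whereas expanding $t^{m}$ in the degree-$N$ Bernstein basis gives $\mu_{m}=L(t^{m})=\sum_{k}\bigl(\binom{k}{m}/\binom{N}{m}\bigr)L(b_{N,k})$. \textbf{This comparison is the main obstacle:} one must show $\max_{k}\bigl|(k/N)^{m}-\binom{k}{m}/\binom{N}{m}\bigr|\to 0$ as $N\to\infty$, after which the estimate $\bigl|\int t^{m}\,d\nu_{N}-\mu_{m}\bigr|\leq \max_{k}\bigl|(k/N)^{m}-\binom{k}{m}/\binom{N}{m}\bigr|\cdot\sum_{k}L(b_{N,k})$ together with $\sum_{k}L(b_{N,k})=1$ closes the argument.

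An alternative route to the whole sufficiency step, which I would keep in reserve, bypasses the moment-convergence estimate by establishing directly that $L$ is a positive functional on the polynomials: using $L(b_{N,k})\geq 0$ together with P\'olya's theorem that a polynomial strictly positive on $[0,1]$ has nonnegative coefficients in the Bernstein basis of sufficiently high degree, one gets $L(p)\geq 0$ for every $p\geq 0$ on $[0,1]$. Combined with $L(1)=\mu_{0}=1$ this forces $|L(p)|\leq\|p\|_{\infty}$, so $L$ extends continuously to $C[0,1]$ (polynomials are dense by Weierstrass), and the Riesz representation theorem produces a Borel probability $\mu$ with $\int t^{n}\,d\mu=L(t^{n})=\mu_{n}$. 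In either approach the classical $[0,1]$ statement is exactly what the paper invokes by citation, and the reduction in the first paragraph is what upgrades it to support on $[0,1)$.
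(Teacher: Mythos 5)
Your proposal is correct, but it is worth being clear that it proves much more than the paper does: the paper offers no proof of this theorem at all. It cites Schm\"udgen, Theorem 3.15, for the classical Hausdorff theorem on $[0,1]$, and its entire own contribution is the remark following the statement, namely that for a probability $\mu$ on $[0,1]$ one has $\lim_n\mu_n=\mu(\{1\})$ by dominated convergence (using $\int_{\{1\}}t^n\,d\mu=\mu(\{1\})$), so that the representing measure is carried by $[0,1)$ exactly when $\lim_n\mu_n=0$. Your first paragraph reproduces precisely this reduction, by the same argument; everything after it is a self-contained proof of the classical $[0,1]$ statement that the paper delegates to the literature. Your main construction is in fact Hausdorff's original one: the identity $L(b_{N,k})=\binom{N}{k}(-1)^{N-k}\Delta^{N-k}\mu_k$ correctly converts complete monotonicity into nonnegativity of the Bernstein weights, the partition of unity $\sum_k b_{N,k}\equiv 1$ gives $\nu_N$ total mass $\mu_0=1$, and weak-$*$ compactness of probabilities on the compact interval produces the limit measure; your reserve route (P\'olya-type positivity in high-degree Bernstein bases, the bound $|L(p)|\leq\|p\|_\infty$ from $\|p\|_\infty\pm p\geq 0$, Weierstrass density, Riesz representation) is the standard functional-analytic variant and is also sound. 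The one step you flag as the main obstacle is true and elementary, so it is not a genuine gap: for $k\geq m$ one has $\binom{k}{m}/\binom{N}{m}=\prod_{i=0}^{m-1}\frac{k-i}{N-i}$ with every factor in $[0,1]$ and $\bigl|\frac{k-i}{N-i}-\frac{k}{N}\bigr|=\frac{i(N-k)}{N(N-i)}\leq\frac{m}{N-m}$, so a telescoping bound on the product of $m$ factors gives $\max_{k}\bigl|(k/N)^m-\binom{k}{m}/\binom{N}{m}\bigr|\leq m^2/(N-m)\to 0$ for fixed $m$, while for $k<m$ both quantities are at most $((m-1)/N)^m$. In short: the paper's citation-plus-remark keeps its appendix brief, whereas your argument makes the result self-contained at the cost of about a page; where the two overlap (the passage from $[0,1]$ to $[0,1)$), they coincide exactly.
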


Hausdorff theorem is stated for measures on $[0,1]$ without the condition $\lim_n \mu_n=0$. If $\mu$ is a probability on $[0,1]$, the condition $\mu(\{1\})=0$ is easily seen to be equivalent to the condition $\lim_n \mu_n=0$ by  the Lebesgue dominated convergence theorem. Observe that $\int_{\{1\}} t^nd\mu=\mu(\{1\})$ for every $n\in\N_0$.

\begin{example}
If $0<a<1,$ then $(a^n)_{n\in\N_0}$ is the sequence of moments of a probability on $[0,1)$, as follows immediately from the Hausdorff theorem and the equality $(\Delta^{j}(a^n))_{j\in\N_0}=((a-1)^ja^n)_{j\in\N_0}$.
\end{example}

	A function $f\in C^\infty([0,\infty))$ is said to be \emph{completely monotonic} whenever $(-1)^jf^{(j)}(x)\geq 0$ whenever $j\in\N_0$.
The following theorem is a consequence of the Hausdorff theorem together with \cite[Theorem 11d, p.158]{Widder}.

\begin{theorem}	\label{absmon}
If $f\in  C^\infty([0,\infty))$ is a completely monotonic function such that $f(0)=1$  and $\lim_{x\to\infty}f(x)=0,$ then $(f(n))_{n=0}^{\infty}$ is a sequence of moment problems for a Borel probability on $[0,1)$.
\end{theorem}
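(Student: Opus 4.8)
The plan is to verify that the sequence $(f(n))_{n\in\N_0}$ fulfils the three hypotheses of the Hausdorff theorem stated above: that its initial term equals $1$, that it is completely monotonic as a sequence, and that it converges to $0$. Once these are in hand, that theorem directly produces a Borel probability $\mu$ on $[0,1)$ with $\int_0^1 t^n\,d\mu(t)=f(n)$ for every $n$, which is exactly the assertion.

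Two of the three conditions are immediate. By hypothesis $f(0)=1$, and since $\lim_{x\to\infty}f(x)=0$, restricting to integer arguments gives $\lim_n f(n)=0$. Positivity of the terms need not be argued separately: it is the $j=0$ instance of the complete monotonicity of the sequence, since $(-1)^0\Delta^0(f(n))=f(n)$, so it comes for free once complete monotonicity is established.

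The essential step, and the one I expect to require the most care, is to pass from the complete monotonicity of $f$ as a \emph{function}, phrased through its derivatives as $(-1)^jf^{(j)}\geq 0$, to the complete monotonicity of the \emph{sequence} $(f(n))_n$, phrased through its finite differences as $(-1)^j\Delta^j(f(n))\geq 0$. This transfer is precisely the content of \cite[Theorem 11d, p.158]{Widder}, which I would invoke directly. The underlying mechanism is the mean value theorem for finite differences: for each $n\in\N_0$ and $j\in\N_0$ there is a point $\xi\in(n,n+j)$ with $\Delta^j(f(n))=f^{(j)}(\xi)$, so that $(-1)^j\Delta^j(f(n))=(-1)^jf^{(j)}(\xi)\geq 0$ by the complete monotonicity of $f$. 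Since this holds for all $n$ and $j$, the sequence $(f(n))_n$ is completely monotonic, the three hypotheses of the Hausdorff theorem are satisfied, and the conclusion follows.
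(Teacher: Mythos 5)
Your proposal is correct and follows exactly the paper's route: the paper likewise deduces the theorem by combining the Hausdorff characterization with \cite[Theorem 11d, p.158]{Widder}, which is precisely the function-to-sequence transfer you invoke. Your sketch of the underlying mechanism via the mean value theorem for finite differences, $\Delta^j(f(n))=f^{(j)}(\xi)$ with $\xi\in(n,n+j)$ (the case $j=0$ being trivial since $\Delta^0(f(n))=f(n)\geq 0$ directly), is a valid self-contained substitute for the citation and goes slightly beyond the paper, which gives no proof at all.
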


\begin{example}
\label{exp}
Let $\mu_n=(n+1)^{-p}$, $p>0$.  The sequence $(\mu_n)_{n\in\N_0}$ is a  sequence of moments  for a Borel probability $\mu$ on $[0,1)$.
\end{example}

\begin{lemma}
\label{tec}
Assume that $f_1,f_2\in C^\infty[0,\infty)$ and $(-1)^jf_i^{(j)}(x)\geq 0$ for all  $x\in [0,\infty)$, $j\in\N_0$, $i=1,2$, $f_1(0)f_2(0)=1$,  $f_1$ bounded and $\lim_{x\to \infty}f_2(x)=0$. Then $((f_1(n)f_2(n))_{n=0}^\infty$ is a  sequence of moments for a Borel probability on $[0,1)$.
\end{lemma}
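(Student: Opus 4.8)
The plan is to deduce the claim from Theorem \ref{absmon} applied to the product $g:=f_1f_2$. To do so I need to check three things: that $g$ is completely monotonic on $[0,\infty)$, that $g(0)=1$, and that $\lim_{x\to\infty}g(x)=0$. The two boundary conditions are immediate: $g(0)=f_1(0)f_2(0)=1$ by hypothesis, and since $f_1$ is bounded while $f_2(x)\to 0$, the product $f_1(x)f_2(x)$ tends to $0$ as $x\to\infty$. The smoothness $g\in C^\infty([0,\infty))$ (including one-sided derivatives at the origin) is inherited from that of $f_1$ and $f_2$.

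The heart of the argument is that a product of completely monotonic functions is again completely monotonic. First I would observe that the hypothesis $(-1)^jf_i^{(j)}(x)\geq 0$ for all $j\in\N_0$ is precisely the statement that each $f_i$ is completely monotonic. Then, by the Leibniz rule,
\[
(f_1f_2)^{(j)}(x)=\sum_{k=0}^{j}\binom{j}{k}f_1^{(k)}(x)\,f_2^{(j-k)}(x),
\]
so that
\[
(-1)^j(f_1f_2)^{(j)}(x)=\sum_{k=0}^{j}\binom{j}{k}\bigl((-1)^kf_1^{(k)}(x)\bigr)\bigl((-1)^{j-k}f_2^{(j-k)}(x)\bigr)\geq 0,
\]
since in every summand both bracketed factors are nonnegative by assumption. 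Hence $g=f_1f_2$ is completely monotonic on $[0,\infty)$.

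With these three properties established, Theorem \ref{absmon} applies to $g$ and yields that $(g(n))_{n=0}^\infty=(f_1(n)f_2(n))_{n=0}^\infty$ is a sequence of moments of a Borel probability on $[0,1)$, which is exactly the assertion.

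I do not foresee a genuine obstacle here; the only point with any content is the closure of complete monotonicity under products, which the Leibniz expansion above settles in one line. The remaining verifications (the normalization $g(0)=1$, the decay $g(x)\to 0$, and the $C^\infty$-regularity needed to invoke Theorem \ref{absmon}) are routine consequences of the hypotheses, so the main effort is simply to phrase the product formula correctly and record that each factor in the Leibniz sum keeps the required sign.
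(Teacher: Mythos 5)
Your proposal is correct and follows essentially the same route as the paper: the Leibniz expansion showing that the product of completely monotonic functions is completely monotonic, the observation that boundedness of $f_1$ and decay of $f_2$ give $f_1(x)f_2(x)\to 0$, and the normalization $f_1(0)f_2(0)=1$, followed by an appeal to Theorem \ref{absmon}. Your write-up is in fact slightly more explicit than the paper's, which leaves the limit and normalization checks implicit.
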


\begin{proof}
Under these hypothesis, $\lim_{x\to\infty}f_1(x)f_2(x)=0$. By applying the  Leibniz rule, we get
$$(-1)^n(f_1f_2)^{(n)}(x)=\sum_{j=0}^{n} \left ( \begin{array}{c}n\\ j\end{array}\right)(-1)^jf_1^{(j)}(x)(-1)^{n-j}f_2^{(n-j)}(x)\geq 0.$$

\noindent The result follows now immediately from Theorem \ref{absmon}.
\end{proof}

The following  example shows that, beyond multiples of the Cesàro operator, there are more examples satisfying the conditions of Proposition \ref{Espectre puntual Hv}.

\begin{example}\label{Ex:nomultipleCesaro}
Let $\mu_n=\frac{n+2}{2(n+1)^2}$, $n\in \N_0.$ There exists a probability $\mu$ on $[0,1)$ which is the  solution of the Hausdorff moment problem for $(\mu_n)_{n\in \N_0}$. Indeed, it is enough to observe that, if one takes $f_1(x)=1+\frac{1}{x+1}$ and $f_2(x)=\frac{1}{2(1+x)}$, then $\mu_n=f_1(n)f_2(n)$. The conclusion holds by Lemma \ref{tec}.
\end{example}

\end{appendices}

{\bf Acknowledgments}{The research of the  three authors  was partially supported by GVA-AICO/2021/170. The research of the second and third authors was also partially supported by the project PID2020-119457GB-100 funded by MCIN/AEI/10.13039/501100011033 and by
	``ERFD A way of making Europe''.}

\end{document}